\newtheorem{Def}{Definition}[section] 
\newtheorem{thm}[Def]{Theorem}
\newtheorem{prop}[Def]{Proposition}
\newtheorem{lem}[Def]{Lemma}
\newtheorem{kor}[Def]{Corollary}
\newtheorem{satz}[Def]{Proposition}
\newtheorem*{Theo}{Theorem}
\newtheorem*{Prop}{Proposition}
\newtheorem*{Cor}{Corollary}
\theoremstyle{definition} 
\newtheorem{deff}[Def]{Definition}
\newtheorem{Not}[Def]{Notation}
\newtheorem{bem}[Def]{Remark}
\begin{document}

\title[Spectrum of generalized Hodge-Laplace operators on tori and spheres]{Spectrum of generalized Hodge-Laplace operators on flat tori and round spheres}

\author{Stine Franziska Beitz}


\subjclass[2010]{58J50, 58J53}

\address{WWU Münster, Mathematisches Institut, Orléansring 12, 48149 Münster,
Germany} 
\email{beitzs@uni-muenster.de}

\begin{abstract}
We consider generalized Hodge-Laplace operators $\alpha d \delta + \beta \delta d$ for $\alpha, \beta > 0$ on $p$-forms on compact Riemannian manifolds. In the case of flat tori and round spheres of different radii, we explicitly calculate the spectrum of these operators. Furthermore, we investigate under which circumstances they are isospectral.
\end{abstract}

\maketitle



\section{Introduction}

Can one hear the shape of a drum? Can one draw conclusions on its shape just based on its sound?
    This question was raised by Mark Kac already in 1966 (\cite{kac}). However, he was not able to answer it completely. Mathematically, in his paper, a clamped elastic membrane is modeled by a domain $G$ in the plane. The resonance frequencies are just the eigenvalues of the Dirichlet problem of the Laplacian $\Delta_0$ on functions, i.e.\ those real numbers $\lambda$, for which there are functions $f: \overline{G} \rightarrow \mathbb{R}$, $f \not\equiv 0$ which vanish on the boundary of $G$ and comply with the eigenvalue equation $\Delta_0 f = \lambda f$.\\
In spectral geometry, similar problems are investigated in a more general setting. Here one is interested in the relationship between the geometric structures of Riemannian manifolds and the spectra of elliptic differential operators. In particular, one wonders which information the spectrum of these operators provides about the geometry of the underlying manifolds. One of the first results of this type was discovered by Hermann Weyl 1911 (\cite{weyl}). He showed that the volume of a bounded domain in the Euclidian space is determined by the asymptotic behaviour of the eigenvalues of the Dirichlet problem of the Laplace-Beltrami operator. To reconstruct a manifold completely up to isometry, the knowledge of the eigenvalues is, however, not sufficient. The answer to the question ``Can One Hear the Shape of a Drum?''\ therefore is ``no''. This was already recognized by John Milnor, who proved the existence of two non-isometric 16-dimensional tori with identical spectra of the Laplacians (\cite{jm}). Later, Gordon, Webb and Wolpert constructed different domains in the plane for which the eigenvalues coincide (\cite{gww}).   

\medskip
In this paper, we consider the family of differential operators $F_{\alpha \beta}^M := \alpha d \delta + \beta \delta d$ for real numbers $\alpha, \beta > 0$ on differential forms on a compact Riemannian manifold $(M, g)$ of dimension $n$. Here $d$ denotes the exterior derivative on differential forms and $\delta$ the adjoint operator of $d$. For $\alpha=\beta=1$ this yields the well-known Hodge-Laplace operator. Our aim is to determine the spectrum $\mathrm{Spec}(F_{\alpha \beta}^M)$ of the operators $F_{\alpha \beta}^M$ explicitly for certain manifolds and to investigate under which circumstances these operators are isospectral, i.e.\ possess the same spectra. 

\medskip
The investigation of the spectrum of the operator $F_{\alpha \beta}^M$ finds applications, for example, in elasticity theory, where the operator appears in the classical problem of linear electrodynamics (\cite[Section 61.10, p.\ 212]{ez}). Physically, this is relevant when computing the oscillation frequencies of an elastic body (for bodies made of simple materials); it turns out that these are determined by the eigenvalues of the operator in linear approximation.

\medskip
The operator $F_{\alpha \beta}^M$ is elliptic and self-adjoint on its domain of definition. Together with the compactness of $M$ it follows (see for example \cite[Lemma 1.6.3]{gilkey}) that its spectrum is discrete and only consists of real eigenvalues of finite multiplicities. Furthermore, the associated eigenforms are smooth. Hence, to determine the spectrum of the operator $F_{\alpha \beta}^M$ it suffices to investigate the algebraic eigenvalue problem, i.e.\ to find solutions $\lambda \in \mathbb{R}$ and $\omega \in \Omega^p(M)$ of the eigenvalue equation $F_{\alpha \beta}^M\omega = \lambda \omega$. Moreover, we just need to understand $F_{\alpha \beta}^M$ as an operator on the smooth $p$-forms $\Omega^p(M)$.

\medskip
In this work, we will calculate the spectrum, that is to say, the eigenvalues with the associated eigenspaces, of the operators $F_{\alpha \beta}^M$ for two sample-manifolds $M$ $-$ the spheres $S^n_r$ of different radii $r>0$ and the flat tori $\mathbb{R}^n/\Lambda$ induced by lattices $\Lambda \subset \mathbb{R}^n$ (see theorems \ref{thmspt} and \ref{thms}). We will thereby discover that the spectrum splits into eigenvalues of the operators $\alpha d \delta$ and $\beta \delta d$, which depend linearly on $\alpha$ and $\beta$, respectively. 

\begin{Prop}
If $n=2p$, the spectrum is symmetric in $\alpha$ and $\beta$. 
\end{Prop}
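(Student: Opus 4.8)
The plan is to produce an explicit unitary conjugation carrying $F_{\alpha\beta}^M$ to $F_{\beta\alpha}^M$, using the Hodge star operator. Recall that on a closed oriented Riemannian $n$-manifold the Hodge star $\star$ is a fibrewise (hence $L^2$- and $H^2$-) isometric bundle isomorphism $\Omega^p(M)\to\Omega^{n-p}(M)$, with $\star\star=(-1)^{p(n-p)}$ on $p$-forms. The point of the hypothesis $n=2p$ is precisely that $\star$ then restricts to an isometric automorphism of $\Omega^p(M)$, i.e.\ a unitary operator on $L^2(M,\Lambda^pT^*M)$ which preserves the domain $H^2(M,\Lambda^pT^*M)$ of $F_{\alpha\beta}^M$.

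Next I would record the intertwining behaviour of $\star$ with $d$ and $\delta$. On forms of each degree one has $\star d\,\star^{-1}=\pm\,\delta$ and $\star\delta\,\star^{-1}=\pm\, d$ with a definite, degree-dependent sign (this is just a rewriting of the standard identity $\delta=(-1)^{n(q+1)+1}\star d\,\star$ on $q$-forms). Conjugating the operator and writing it out gives
\[
\star\, F_{\alpha\beta}^M\,\star^{-1}=\alpha\,(\star d\,\star^{-1})(\star \delta\,\star^{-1})+\beta\,(\star \delta\,\star^{-1})(\star d\,\star^{-1}),
\]
so on $\Omega^{n-p}(M)$ the summand $\alpha\, d\delta$ turns into a scalar multiple of $\delta d$ and $\beta\,\delta d$ into a scalar multiple of $d\delta$; it is exactly this swap of $d\delta$ with $\delta d$ that will produce the exchange of $\alpha$ and $\beta$. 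The two scalar factors can be pinned down either by a direct sign computation in each degree, or — more cheaply — by noting that the same formula specialized to $\alpha=\beta=1$ must reproduce the well-known identity $\star\,\Delta=\Delta\,\star$ for the ordinary Hodge--Laplacian; since $d\delta$ and $\delta d$ have orthogonal ranges (Hodge decomposition), this forces both factors to equal $+1$. Hence $\star\, F_{\alpha\beta}^M\,\star^{-1}=\alpha\,\delta d+\beta\, d\delta=F_{\beta\alpha}^M$ on $\Omega^{n-p}(M)$.

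When $n=2p$ this is an identity of operators on $\Omega^p(M)$, equivalently on $L^2(M,\Lambda^pT^*M)$ with common domain $H^2$: the operators $F_{\alpha\beta}^M$ and $F_{\beta\alpha}^M$ are conjugate via the unitary $\star$. Unitarily equivalent operators have identical spectra, eigenvalue by eigenvalue with multiplicities, so $\mathrm{Spec}(F_{\alpha\beta}^M)=\mathrm{Spec}(F_{\beta\alpha}^M)$. In other words the spectrum is invariant under $\alpha\leftrightarrow\beta$, which is the assertion.

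The only genuine bookkeeping is getting the signs in $\star d\,\star^{-1}=\pm\delta$ and $\star\delta\,\star^{-1}=\pm d$ straight; routing the argument through $\star\,\Delta=\Delta\,\star$ is what makes this painless, and it also automatically takes care of the degree shift. A secondary issue is orientability, which is needed for a globally defined $\star$: this is harmless here since the round spheres $S^n_r$ and the flat tori $\mathbb{R}^n/\Lambda$ are orientable (and in the non-orientable case one may instead use the isomorphism onto orientation-twisted $(n-p)$-forms, through which the same computation goes).
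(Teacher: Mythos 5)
Your argument is correct and is essentially the paper's own proof: the paper establishes $*_p F_{\alpha\beta,p}^M *_p^{-1}=F_{\beta\alpha,n-p}^M$ via the Hodge star and concludes that the eigenspaces correspond bijectively, which for $n=2p$ gives the claimed symmetry. Your only deviation is determining the signs by comparison with $\star\Delta=\Delta\star$ and the orthogonality of $\mathrm{im}(d)$ and $\mathrm{im}(\delta)$ rather than by direct computation, which is a valid shortcut.
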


It will turn out that the eigenvalues of $F_{\alpha \beta}^{T}$ on flat tori $T$ are just $\alpha \lambda$ and $\beta \lambda$ where $\lambda$ are the eigenvalues of the ordinary Laplacian $\Delta_0$ on smooth functions. With this knowledge statements about the isospectrality of two operators $F_{\alpha \beta}^{T}$ and $F_{\alpha' \beta'}^{T'}$ for $\alpha, \alpha', \beta, \beta' > 0$ on flat tori $T$ and $T'$ can be made. \\
First we will fix the coefficients $\alpha$ and $\beta$ and notice that obviously for two compact isometric Riemannian manifolds $M$ and $N$ the operators $F_{\alpha \beta}^M$ and $F_{\alpha \beta}^N$ have the same spectrum. Then the question arises whether the converse of this statement in the case of flat tori holds as well, i.e.\ whether the spectrum of $F_{\alpha \beta}^{T}$ already determines the flat torus $T$ up to isometry. In dimension $n=1$, that is to say for 1-dimensional tori, one directly sees that the answer is ``yes''\ \!\!. Next we prove the following theorem for two flat tori $T_1$ and $T_2$. We remark that in the case $\alpha = \beta$, this was already observed by Milnor in \cite{jm}. 

\begin{Theo}
The operators $F_{\alpha \beta}^{T_1}$ and $F_{\alpha \beta}^{T_2}$ are isospectral if and only if the Laplacians $\Delta_0^{T_1}$ and $\Delta_0^{T_2}$ have the same spectrum. 
\end{Theo}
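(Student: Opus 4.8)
The plan is to deduce everything from the explicit computation of $\mathrm{Spec}(F_{\alpha\beta}^{T})$ on a flat torus $T$ contained in Theorem~\ref{thms}. Put $a:=\binom{n-1}{p-1}$ and $b:=\binom{n-1}{p}$, so $a+b=\binom{n}{p}$, and let $m_{\Delta_0}^{T}(\lambda)$ be the multiplicity of $\lambda$ in $\mathrm{Spec}(\Delta_0^{T})$ (and $0$ if $\lambda\notin\mathrm{Spec}(\Delta_0^{T})$). According to Theorem~\ref{thms} the nonzero eigenvalues of $F_{\alpha\beta}^{T}$ are exactly the numbers $\alpha\lambda$ and $\beta\lambda$ for $\lambda\in\mathrm{Spec}(\Delta_0^{T})$ with $\lambda>0$, with the harmonic $p$-forms (of dimension $\binom{n}{p}$) accounting for the eigenvalue $0$; equivalently, the multiplicity of an arbitrary $\sigma\geq 0$ in $\mathrm{Spec}(F_{\alpha\beta}^{T})$ is
\[
m_{F}^{T}(\sigma)\;=\;a\cdot m_{\Delta_0}^{T}(\sigma/\alpha)\;+\;b\cdot m_{\Delta_0}^{T}(\sigma/\beta).
\]
I will also use two reductions. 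First, $\dim T_1=\dim T_2=:n$: for $\Delta_0$ this is Weyl's law, and for $F_{\alpha\beta}$ it follows from the leading short-time behaviour $\mathrm{Tr}(e^{-tF_{\alpha\beta}^{T_i}})\sim(a\,\alpha^{-n/2}+b\,\beta^{-n/2})\,\mathrm{vol}(T_i)\,(4\pi t)^{-n/2}$, which in turn is immediate from the displayed formula and the heat asymptotics of $\Delta_0$. Second, we may assume $1\leq p\leq n-1$: if $p=0$ then $\delta$ annihilates $\Omega^p(T)$, so $F_{\alpha\beta}^{T}=\beta\,\Delta_0^{T}$, and if $p=n$ then $d$ annihilates $\Omega^p(T)$ and the Hodge star identifies $F_{\alpha\beta}^{T}$ with $\alpha\,\Delta_0^{T}$; in both cases the asserted equivalence is clear. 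Under these reductions $a\geq 1$ and $b\geq 1$.

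The direction ``$\Leftarrow$'' is then immediate from the formula for $m_F^T(\sigma)$: if $\Delta_0^{T_1}$ and $\Delta_0^{T_2}$ are isospectral, the formula produces the same multiplicities for $F_{\alpha\beta}^{T_1}$ and $F_{\alpha\beta}^{T_2}$. (This is the converse of the elementary remark, made before the theorem, that $\Delta_0$-isospectral tori are $F_{\alpha\beta}$-isospectral.)

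For ``$\Rightarrow$'' I argue by contraposition, so suppose $\mathrm{Spec}(\Delta_0^{T_1})\neq\mathrm{Spec}(\Delta_0^{T_2})$. The union $\mathrm{Spec}(\Delta_0^{T_1})\cup\mathrm{Spec}(\Delta_0^{T_2})$ is a discrete subset of $[0,\infty)$ without finite accumulation point, so the (nonempty) set of $\nu$ with $m_{\Delta_0}^{T_1}(\nu)\neq m_{\Delta_0}^{T_2}(\nu)$ has a least element, which I also call $\nu$; since flat tori are connected, $m_{\Delta_0}^{T_i}(0)=1$, hence $\nu>0$. Now set $\sigma:=\min(\alpha,\beta)\cdot\nu>0$ and apply the multiplicity formula at $\sigma$ to each $T_i$. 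Say $\alpha\leq\beta$; then $\sigma/\alpha=\nu$ while $\sigma/\beta=(\alpha/\beta)\nu\leq\nu$. If $\alpha<\beta$, then $(\alpha/\beta)\nu<\nu$, so by minimality of $\nu$ the terms $b\cdot m_{\Delta_0}^{T_i}(\sigma/\beta)$ agree for $i=1,2$ and cancel upon subtracting, leaving
\[
m_{F}^{T_1}(\sigma)-m_{F}^{T_2}(\sigma)\;=\;a\,\bigl(m_{\Delta_0}^{T_1}(\nu)-m_{\Delta_0}^{T_2}(\nu)\bigr)\;\neq\;0
\]
since $a\geq 1$; if $\alpha=\beta$, then directly $m_F^{T_i}(\sigma)=(a+b)\,m_{\Delta_0}^{T_i}(\nu)$ and the same conclusion follows from $a+b=\binom{n}{p}\geq 1$. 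The case $\alpha>\beta$ is symmetric, with $b$ replacing $a$. In every case $F_{\alpha\beta}^{T_1}$ and $F_{\alpha\beta}^{T_2}$ differ in the multiplicity of the eigenvalue $\sigma$, hence are not isospectral, which establishes the contrapositive of ``$\Rightarrow$''.

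The one delicate point --- and the reason the implication ``$\Rightarrow$'' is not a triviality --- is that an eigenvalue $\alpha\lambda$ of $F_{\alpha\beta}^{T}$ may coincide with an eigenvalue $\beta\lambda'$ arising from a different $\lambda'\in\mathrm{Spec}(\Delta_0^{T})$; because of such collisions the multiplicities of $F_{\alpha\beta}^{T}$ need not determine $\mathrm{Spec}(\Delta_0^{T})$ eigenvalue by eigenvalue. Working at the least frequency $\nu$ at which $T_1$ and $T_2$ disagree is precisely what defuses this: when $\alpha<\beta$ the colliding partner $\beta\lambda'$ of $\alpha\nu$ corresponds to $\lambda'=(\alpha/\beta)\nu<\nu$, a frequency below the threshold at which the two tori are already known to agree, so its contribution cancels and the discrepancy at $\nu$ survives.
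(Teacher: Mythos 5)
Your proof is correct and takes essentially the same route as the paper: both rest on the identity $\mathrm{Spec}(F_{\alpha \beta, p}^{T}) = \alpha\, \mathrm{Spec}(\Delta_0^{T}) \ ^{\binom{n-1}{p-1}} \! \Cup \ \!\!\!^{\binom{n-1}{p}}\, \beta\, \mathrm{Spec}(\Delta_0^{T})$ from Theorem \ref{thmspt} and Remark \ref{bemnt1}, and both establish that a discrete, bounded-below weighted set $A$ is determined by $\alpha A \ ^{l} \! \Cup^{m} \beta A$ by exploiting minimality, which is exactly the content of Lemma \ref{1}. The only difference is presentational: the paper proves this injectivity by an explicit reconstruction algorithm that peels off minima, while you argue the contrapositive at the first frequency where the two spectra disagree (your dimension and $p\in\{0,n\}$ reductions are harmless but unnecessary, since the theorem already assumes lattices in the same $\mathbb{R}^n$ and $1 \leq p \leq n$ with the convention $\binom{n-1}{n}=0$).
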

 
 Thereby we also can answer the above question for higher dimensions since the answer in the case of the Laplacian on functions is already known: In dimensions $n=2, 3$ it is ``yes''\ as well (see the proof of Proposition \ref{PropOpIsospToriIsom} below). For higher dimensions, we have the following result:
 
\begin{Prop}
From dimension $n=4$ onward there are non-isometric flat tori the spectra of which with respect to $F_{\alpha \beta}$  coincide. 
\end{Prop}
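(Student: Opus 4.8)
The plan is to reduce the assertion to a well-known fact about the ordinary Laplacian on functions. By the theorem above, $F_{\alpha \beta}^{T_1}$ and $F_{\alpha \beta}^{T_2}$ are isospectral precisely when $\Delta_0^{T_1}$ and $\Delta_0^{T_2}$ are, so it suffices to exhibit, for every $n \geq 4$, a pair of non-isometric flat $n$-tori whose Laplacians on functions have the same spectrum. Writing $T_i = \mathbb{R}^n/\Lambda_i$, recall that the eigenvalues of $\Delta_0^{T_i}$ are the numbers $4\pi^2 \|v\|^2$ with $v$ ranging over the dual lattice $\Lambda_i^*$, each occurring with multiplicity equal to the number of dual vectors of the corresponding length; hence $\Delta_0^{T_1}$ and $\Delta_0^{T_2}$ have the same spectrum if and only if $\Lambda_1^*$ and $\Lambda_2^*$ carry the same theta series, which by Poisson summation is equivalent to $\Lambda_1$ and $\Lambda_2$ having the same theta series. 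On the other hand $T_1$ and $T_2$ are isometric if and only if $\Lambda_1$ and $\Lambda_2$ are related by an orthogonal transformation of $\mathbb{R}^n$. Thus the statement amounts to: in every dimension $n \geq 4$ there exist two lattices with identical theta series that are not equivalent under $\mathrm{O}(n)$.

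For $n = 4$ this is a classical phenomenon in the theory of quadratic forms: there are positive definite quaternary integral quadratic forms representing each integer equally often but lying in different $\mathrm{GL}_4(\mathbb{Z})$-classes, equivalently non-isometric four-dimensional lattices with the same theta function. Here I would simply invoke a concrete such pair (for instance the four-dimensional examples of Conway and Sloane), record their Gram matrices, and carry out the standard verification that their theta series coincide while a lattice invariant such as the configuration of shortest nonzero vectors differs, so that the lattices are indeed not isometric. This settles the base case $n = 4$.

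For $n > 4$ the idea is to bootstrap from the four-dimensional pair $\Lambda_1, \Lambda_2 \subset \mathbb{R}^4$ by passing to the orthogonal direct sums $\Lambda_i \oplus c\,\mathbb{Z}^{n-4}$ for a suitable scaling constant $c > 0$. Since the theta series of an orthogonal sum is the product of the theta series of its summands, the two resulting lattices are again isospectral. For the non-isometry I would choose $c$ small enough that the minimum of $c\,\mathbb{Z}^{n-4}$ is strictly smaller than the minima of $\Lambda_1$ and $\Lambda_2$; then in $\Lambda_i \oplus c\,\mathbb{Z}^{n-4}$ the shortest nonzero vectors span exactly the summand $c\,\mathbb{Z}^{n-4}$, and $\Lambda_i$ is recovered as the orthogonal complement of that span, so any isometry of the sums would induce an isometry of $\Lambda_1$ onto $\Lambda_2$, a contradiction. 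Hence for every $n \geq 4$ one obtains non-isometric flat tori on which $F_{\alpha \beta}$ has the same spectrum.

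The genuinely non-elementary ingredient is the existence of the four-dimensional pair of lattices with equal theta series, which rests on number-theoretic input about quaternary quadratic forms; I would cite this rather than reprove it. Everything else — the translation between isospectrality of $F_{\alpha \beta}$, isospectrality of $\Delta_0$, and equality of theta series, together with the promotion of one low-dimensional example to all higher dimensions — is routine once the dictionary between flat tori and lattices is in place, the only point requiring a little care being the preservation of non-isometry when forming orthogonal sums with a scaled cubic lattice.
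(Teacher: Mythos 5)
Your proposal is correct and follows essentially the same route as the paper: reduce to isospectrality of $\Delta_0$ via Theorem \ref{2}, then invoke the four-dimensional Conway--Sloane pair of non-isometric lattices with equal theta series. The only divergence is the passage to dimensions $n>4$: the paper cites a proposition from Berger--Gauduchon--Mazet for this step, whereas you prove it directly via the orthogonal sums $\Lambda_i \oplus c\,\mathbb{Z}^{n-4}$ with $c$ chosen small enough that the span of the shortest nonzero vectors isolates the cubic factor and forces any isometry of the sums to restrict to an isometry of $\Lambda_1$ onto $\Lambda_2$ --- a correct, self-contained substitute for that citation.
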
 
 
On the other hand, fixing a flat torus $T$ of dimension $n \neq 2p$ and allowing arbitrary parameters $\alpha, \alpha', \beta, \beta' > 0$, we find that 

\begin{Theo}
$F_{\alpha \beta}^{T}$ and $F_{\alpha' \beta'}^{T}$ can only be isospectral in the trivial case that these operators are already the same. 
\end{Theo}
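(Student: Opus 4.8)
The plan is to reduce the claim to a single functional identity between heat traces of the scalar Laplacian and then to recover $\alpha$ and $\beta$ from its behaviour as $t\to\infty$. Write $T=\mathbb{R}^n/\Lambda$ and let $0<\lambda_1<\lambda_2<\cdots$ be the distinct positive eigenvalues of $\Delta_0^{T}$, with multiplicities $m_1,m_2,\dots\ge 1$. By the explicit computation of the torus spectrum (Theorem~\ref{thms}), $\mathrm{Spec}(F_{\alpha\beta}^{T})$ consists of $0$ with multiplicity $\binom{n}{p}$ together with the numbers $\alpha\lambda_i$ with multiplicity $a\,m_i$ and $\beta\lambda_i$ with multiplicity $b\,m_i$, where $a:=\binom{n-1}{p-1}$ and $b:=\binom{n-1}{p}$. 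The hypothesis $n\neq 2p$ enters the proof only through its consequence $a\neq b$; I shall also assume $0<p<n$, so that moreover $a,b\ge 1$. (If $p=0$, respectively $p=n$, then $F_{\alpha\beta}^{T}$ equals $\beta\Delta_0$, respectively $\alpha\,d\delta$, and does not involve the other parameter at all, so ``being the same operator'' constrains only the parameter that actually occurs; in those cases the argument below simply loses one of its two summands and still yields the desired equality.)

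First I would pass from spectra to heat traces. Let $\Theta_0(t)=\mathrm{tr}\,e^{-t\Delta_0^{T}}=1+\sum_{i\ge 1}m_i e^{-\lambda_i t}$ denote the heat trace of the scalar Laplacian; it is smooth and strictly decreasing on $(0,\infty)$, hence injective. Summing $e^{-t\lambda}$ over $\mathrm{Spec}(F_{\alpha\beta}^{T})$ and using Pascal's identity $\binom{n}{p}=a+b$ to cancel the constant terms, one obtains
\begin{equation*}
\sum_{\lambda\in\mathrm{Spec}(F_{\alpha\beta}^{T})}e^{-t\lambda}\;=\;a\,\Theta_0(\alpha t)+b\,\Theta_0(\beta t),\qquad t>0.
\end{equation*}
Since a spectrum and its heat trace determine one another, the isospectrality of $F_{\alpha\beta}^{T}$ and $F_{\alpha'\beta'}^{T}$ is equivalent to
\begin{equation*}
a\,\Theta_0(\alpha t)+b\,\Theta_0(\beta t)\;=\;a\,\Theta_0(\alpha' t)+b\,\Theta_0(\beta' t)\qquad\text{for all }t>0. \tag{$\star$}
\end{equation*}

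Next I would examine $(\star)$ as $t\to\infty$. Expanding $\Theta_0$ gives
\begin{equation*}
a\,\Theta_0(\alpha t)+b\,\Theta_0(\beta t)-(a+b)\;=\;\sum_{i\ge 1}m_i\bigl(a\,e^{-\alpha\lambda_i t}+b\,e^{-\beta\lambda_i t}\bigr),
\end{equation*}
and, collecting the exponentials of slowest decay, the dominant term as $t\to\infty$ is $m_1 c\,e^{-rt}$ with $r=\min(\alpha,\beta)\lambda_1$ and $c=a$ if $\alpha<\beta$, $c=b$ if $\alpha>\beta$, $c=a+b$ if $\alpha=\beta$, all remaining terms decaying strictly faster. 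Hence the left-hand side of $(\star)$ behaves like $m_1 c\,e^{-rt}$, and the right-hand side like $m_1 c'\,e^{-r't}$ with $r',c'$ defined from $\alpha',\beta'$ in the same way. Two functions that coincide have the same leading exponential behaviour, so $r=r'$ and, since $m_1\ge 1$, $c=c'$. The equality $r=r'$ gives $\min(\alpha,\beta)=\min(\alpha',\beta')$. Because $a\neq b$ and $a,b\ge 1$, the numbers $a$, $b$, $a+b$ are pairwise distinct, so $c=c'$ forces the two sides of $(\star)$ into the same one of the three cases $\alpha<\beta$, $\alpha>\beta$, $\alpha=\beta$.

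Finally I would conclude case by case. If $\alpha<\beta$ and $\alpha'<\beta'$, then $r=\alpha\lambda_1=\alpha'\lambda_1$ gives $\alpha=\alpha'$; substituting back into $(\star)$ leaves $b\,\Theta_0(\beta t)=b\,\Theta_0(\beta' t)$, and the injectivity of $\Theta_0$ yields $\beta=\beta'$. The case $\alpha>\beta$ is symmetric, and if $\alpha=\beta$ and $\alpha'=\beta'$ then $r=r'$ already gives $\alpha=\alpha'=\beta=\beta'$. In every case $\alpha=\alpha'$ and $\beta=\beta'$, i.e.\ $F_{\alpha\beta}^{T}=F_{\alpha'\beta'}^{T}$, which is the assertion. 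The delicate step — and the only place where $n\neq 2p$ is used — is ``$c=c'$ forces the same case'': for $n=2p$ one has $a=b$, the coefficient $c$ no longer separates $\alpha<\beta$ from $\alpha>\beta$, and indeed $F_{\alpha\beta}^{T}$ and $F_{\beta\alpha}^{T}$ are then isospectral without being equal, in accordance with the symmetry of the spectrum recorded in the proposition above.
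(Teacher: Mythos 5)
Your proof is correct, and it reaches the conclusion by a technically different route than the paper. The paper's proof of Theorem \ref{4} works directly with the weighted set $\mathrm{Spec}(F_{\alpha\beta,p}^{T^n})=\alpha A\ ^{a}\!\Cup^{b}\beta A$, where $A=\mathrm{Spec}(\Delta_0^{T^n})$, $a=\binom{n-1}{p-1}$, $b=\binom{n-1}{p}$, and shows that $(\gamma,\delta)\mapsto\gamma A\ ^{a}\!\Cup^{b}\delta A$ is injective by explicit reconstruction: delete the zero eigenvalue, read off the multiplicity of the smallest positive element (which by Corollary \ref{gv} is $a\,A(|k|)$, $b\,A(|k|)$ or $(a+b)A(|k|)$), use this to decide which of $\gamma,\delta$ realizes the minimum, recover that parameter, strip off the corresponding copies of $\gamma A$, and repeat. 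You instead encode the spectrum in the heat trace, reduce isospectrality to the identity $a\,\Theta_0(\alpha t)+b\,\Theta_0(\beta t)=a\,\Theta_0(\alpha' t)+b\,\Theta_0(\beta' t)$, and extract $\min(\alpha,\beta)$ together with the case distinction from the leading exponential as $t\to\infty$, finishing with the injectivity of $\Theta_0$. The underlying mechanism is the same in both arguments --- the coefficient attached to the smallest positive eigenvalue lies in $\{a,b,a+b\}$, and these are pairwise distinct precisely because $n\neq 2p$ forces $\binom{n-1}{p-1}\neq\binom{n-1}{p}$ --- but your analytic packaging trades the weighted-set bookkeeping for a tail estimate (routine, but worth one line) showing that $\sum_{i\geq 2}m_i e^{-\alpha\lambda_i t}=o(e^{-\alpha\lambda_1 t})$, so that the claimed dominant term really dominates the infinite sum and not merely each individual summand. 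Two small remarks: your citation should point to Theorem \ref{thmspt} and Remark \ref{bemnt1} (the torus computation), not to Theorem \ref{thms} (the sphere case); and your parenthetical on $p=n$ is a genuinely useful observation --- there $b=0$, the spectrum is blind to $\beta$, and the precise conclusion ``$(\alpha,\beta)=(\alpha',\beta')$'' of Theorem \ref{4} fails even though the informal ``the operators are already the same'' survives, a boundary case that the paper's own argument (which needs the three candidate multiplicities to be distinct and nonzero) passes over silently.
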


Finally, we consider two flat tori, the lattices of which are related by stretching by a factor $c>0$. We show: 

\begin{Cor}
$F_{\alpha \beta}^{\mathbb{R}^n/\Lambda}$ and $F_{\alpha' \beta'}^{\mathbb{R}^n/c\Lambda}$ have the same spectrum if and only if $(\alpha', \beta') = (c^2\alpha, c^2 \beta)$. 
\end{Cor}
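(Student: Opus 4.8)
The plan is to reduce the statement, via a homothety argument, to the already-established rigidity for two operators $F_{\alpha\beta}^{T}$ and $F_{\alpha'\beta'}^{T}$ on one and the same torus. First I would record the effect of scaling on the scalar spectrum. Writing $\Lambda^{*}$ for the dual lattice of $\Lambda$, the eigenvalues of $\Delta_{0}^{\mathbb{R}^n/\Lambda}$ are the numbers $4\pi^{2}\lVert\mu\rVert^{2}$ with $\mu$ ranging over $\Lambda^{*}$ (the function $x\mapsto e^{2\pi i\langle\mu,x\rangle}$ being an eigenform), so the multiplicity of an eigenvalue is the number of dual lattice vectors of the corresponding length. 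Since $(c\Lambda)^{*}=c^{-1}\Lambda^{*}$, the map $\mu\mapsto c^{-1}\mu$ is a bijection $\Lambda^{*}\to(c\Lambda)^{*}$ that rescales all lengths by $c^{-1}$, whence $\mathrm{Spec}(\Delta_{0}^{\mathbb{R}^n/c\Lambda})=c^{-2}\,\mathrm{Spec}(\Delta_{0}^{\mathbb{R}^n/\Lambda})$ as multisets.

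Next I would feed this into the explicit description of the spectrum of $F_{\alpha\beta}$ on a flat torus, according to which the eigenvalues of $F_{\alpha\beta}^{T}$ are exactly the numbers $\alpha\lambda$ and $\beta\lambda$ for $\lambda\in\mathrm{Spec}(\Delta_{0}^{T})$, the multiplicities being governed by the multiplicity of $\lambda$ for $\Delta_{0}^{T}$ together with combinatorial factors depending only on $n$ and $p$ (and hence unchanged when the torus is scaled). Combining the two observations yields, again as multisets,
\[
\mathrm{Spec}\bigl(F_{\alpha'\beta'}^{\mathbb{R}^n/c\Lambda}\bigr)=\{\, c^{-2}\alpha'\lambda,\ c^{-2}\beta'\lambda \ :\ \lambda\in\mathrm{Spec}(\Delta_{0}^{\mathbb{R}^n/\Lambda})\,\}=\mathrm{Spec}\bigl(F_{c^{-2}\alpha',\,c^{-2}\beta'}^{\mathbb{R}^n/\Lambda}\bigr).
\]
Therefore $F_{\alpha\beta}^{\mathbb{R}^n/\Lambda}$ and $F_{\alpha'\beta'}^{\mathbb{R}^n/c\Lambda}$ are isospectral if and only if $F_{\alpha\beta}^{\mathbb{R}^n/\Lambda}$ and $F_{c^{-2}\alpha',\,c^{-2}\beta'}^{\mathbb{R}^n/\Lambda}$ are.

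Finally I would invoke the rigidity theorem for two operators on a fixed torus: for $n\neq 2p$ these can be isospectral only if they coincide, i.e.\ $(c^{-2}\alpha',c^{-2}\beta')=(\alpha,\beta)$, equivalently $(\alpha',\beta')=(c^{2}\alpha,c^{2}\beta)$; the converse implication is immediate from the displayed identity, which gives the ``if'' direction essentially for free. (Without the hypothesis $n\neq 2p$ one must, by the symmetry of the torus spectrum in $\alpha$ and $\beta$, also admit the swapped pair $(\alpha',\beta')=(c^{2}\beta,c^{2}\alpha)$, and the statement holds with this proviso.)

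The only genuine work is the multiplicity bookkeeping in the displayed identity: one has to check that rescaling the lattice rescales the \emph{entire} $p$-form spectrum, with multiplicities, by exactly $c^{-2}$, and in particular that it does not mix the ``$\alpha$-part'' with the ``$\beta$-part'' of the spectrum. This is precisely where the explicit torus computation is needed, and it is also the only place where the restriction $n\neq 2p$ enters, through the cited rigidity theorem.
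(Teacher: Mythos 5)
Your argument is correct and is essentially identical to the paper's: both rescale the dual lattice via $(c\Lambda)^*=\frac{1}{c}\Lambda^*$ to identify $\mathrm{Spec}(F_{\alpha'\beta',p}^{\mathbb{R}^n/c\Lambda})$ with $\mathrm{Spec}(F_{\frac{\alpha'}{c^2}\frac{\beta'}{c^2},p}^{\mathbb{R}^n/\Lambda})$ and then invoke Theorem \ref{4} on the fixed torus $\mathbb{R}^n/\Lambda$. Your handling of the $n=2p$ case via the $\{\alpha,\beta\}$-symmetry also matches the paper's statement.
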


In dimension $n=2p$, similar statements hold which take into consideration that the spectra of the operators $F_{\alpha \beta}$ are symmetric in $\alpha$ and $\beta$, as already mentioned above. \\
The question of what can be said about the spectra of $F_{\alpha \beta}^{\mathbb{R}^n/\Lambda}$ and $F_{\alpha' \beta'}^{\mathbb{R}^n/\Lambda'}$ for arbitrary lattices $\Lambda$ and $\Lambda'$ remains open. For $n=2$, it is tempting to conjecture that these coincide if and only if there exists a $c> 0$ and a $Q \in O(n)$ such that $\Lambda' = cQ\Lambda$ and $\{\alpha', \beta'\}=\{c^2\alpha, c^2\beta\}$.  
 
\medskip
In contrast to flat tori where the eigenvalue 0 always has the multiplicity $\binom{n}{p}$, the eigenvalues on spheres are all positive for $p\neq 0$. Here again, we compare the operators $F_{\alpha \beta}^{S^n_r}$ and $F_{\alpha' \beta'}^{S^n_{r'}}$. 

\begin{Prop}
If these operators have the same spectra, then the radii $r$ and $r'$ are equal if and only if $\alpha=\alpha'$ and $\beta=\beta'$ (for $n=2p$, up to exchange of the roles of $\alpha$ and $\beta$). 
\end{Prop}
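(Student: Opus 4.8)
The plan is to reduce the statement to the following \emph{recognition lemma}: the spectrum of $F_{\alpha\beta}^{S^n_r}$ on $p$-forms determines the pair $(\alpha/r^2,\beta/r^2)$ --- as an ordered pair if $n\neq 2p$, and as an \emph{unordered} pair if $n=2p$. Throughout one should assume $0<p<n$, so that there are no harmonic $p$-forms (if $p=0$ or $p=n$ one of the two parameters does not enter the spectrum at all and the claim as stated is false); this is consistent with the remark that on spheres all eigenvalues are positive. I would first recall, from the explicit computation of $\mathrm{Spec}(F_{\alpha\beta}^{S^n_r})$ in Theorem~\ref{thms}, that as a multiset
\[
\mathrm{Spec}(F_{\alpha\beta}^{S^n_r}) \;=\; \frac{\alpha}{r^2}\,E_d \;\cup\; \frac{\beta}{r^2}\,E_\delta ,
\]
where $E_d$ and $E_\delta$ are the fixed, explicit, discrete multisets of positive eigenvalues (listed with multiplicity) of $d\delta$ on exact $p$-forms and of $\delta d$ on coexact $p$-forms of the unit sphere $S^n_1$; both are unbounded, the eigenvalues are of the form $(k+c)(k+c')$ for integers $k\ge 1$ and fixed $c,c'$ depending on $n,p$ (so the ratios of consecutive eigenvalues are fixed, independent of $\alpha,\beta$), the multiplicities grow polynomially of degree $n-1$ in $k$, and $E_d=E_\delta$ precisely when $n=2p$ (via the Hodge star $\star$, which is exactly the source of the $\alpha\leftrightarrow\beta$ symmetry).

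Granting the recognition lemma, the proposition follows in a few lines. If the two operators are isospectral, then $\{(\alpha/r^2,E_d),(\beta/r^2,E_\delta)\}=\{(\alpha'/r'^2,E_d),(\beta'/r'^2,E_\delta)\}$, with the obvious identification when $E_d=E_\delta$. If $\alpha=\alpha'$ and $\beta=\beta'$, then matching the datum attached to $E_d$ gives $\alpha/r^2=\alpha/r'^2$, hence $r=r'$ --- and in the case $n=2p$ a one-line check (compare the two numbers, or their sum) gives the same conclusion. Conversely, if $r=r'$, then matching the pairs gives $(\alpha,\beta)=(\alpha',\beta')$, up to exchanging $\alpha$ and $\beta$ when $n=2p$. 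So everything rests on the recognition lemma.

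To prove the recognition lemma I would recover $\{(\mu,E_d),(\nu,E_\delta)\}$ from $S:=\mu E_d\cup\nu E_\delta$ ($\mu,\nu>0$) by working from the bottom of the spectrum. When $n=2p$ (so $E_d=E_\delta=:E$) this is immediate: $\min S=\min(\mu,\nu)\cdot\min E$ fixes $\min(\mu,\nu)$; cancelling the multiset $\min(\mu,\nu)\cdot E$ from both occurrences in $S$ (multiset cancellation is legitimate since the multiplicities are finite) leaves $\max(\mu,\nu)\cdot E$, whose minimum fixes $\max(\mu,\nu)$. When $n\neq 2p$, write $\epsilon_d:=\min E_d$, $\epsilon_\delta:=\min E_\delta$, so $\min S=\min(\mu\epsilon_d,\nu\epsilon_\delta)$; if no coincidence occurs at the bottom, the multiplicity of $\min S$ is the first multiplicity of whichever of $E_d,E_\delta$ realizes the minimum, and since that number is the same on both sides of an isospectrality the \emph{same} branch realizes the minimum on both sides, giving say $\mu=\mu'$, after which one cancels $\mu E_d$ and recurses on $\nu E_\delta$. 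The remaining case, a coincidence $\mu\epsilon_d=\nu\epsilon_\delta$, forces $\mu/\nu=\epsilon_\delta/\epsilon_d$, so the ratio is already pinned down; one then fixes the common scale using the leading Weyl term, since the counting functions satisfy $N_{E_d}(x)\sim c_d\,x^{n/2}$ and $N_{E_\delta}(x)\sim c_\delta\,x^{n/2}$ with explicit constants ($c_d\propto\binom{n-1}{p-1}$, $c_\delta\propto\binom{n-1}{p}$), so that $c_d\neq c_\delta$ exactly when $n\neq 2p$, and $N_S(x)\sim(c_d\mu^{-n/2}+c_\delta\nu^{-n/2})x^{n/2}$ together with the known ratio determines $\mu$ and $\nu$.

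I expect the main obstacle to be precisely these arithmetic coincidences: for special values of $\mu/\nu$ elements of $\mu E_d$ and of $\nu E_\delta$ fall on top of each other, so that naive peeling of the progressions can mis-assign eigenvalues, and excluding a spurious re-decomposition of $S$ there genuinely requires the precise quadratic form $(k+c)(k+c')$ of the eigenvalues and the precise polynomial multiplicities from Theorem~\ref{thms}, not merely coarse asymptotics. One must also treat the low-dimensional sphere $S^2$ (the case $n=2=2p$) by hand, where the subleading heat/Weyl coefficient carries no extra information, but where the explicit sequence $\{k(k+1)\}_{k\ge 1}$ with multiplicities $2k+1$ still makes the bottom-up peeling argument go through.
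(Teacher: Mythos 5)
Your proposal is correct, and for the substantive part it follows the same underlying method as the paper: recover the scaled parameters from the bottom of the spectrum together with the multiplicity there. The organization differs, though. The paper proves the Proposition in one line by quoting two separately established results: Theorem~\ref{varpar1} (injectivity of $(\gamma,\delta)\mapsto\mathrm{Spec}(F_{\gamma\delta,p}^{S^n_r})$ for fixed $r$, giving ``$r=r'\Rightarrow(\alpha,\beta)=(\alpha',\beta')$'') and Proposition~\ref{abgleich} (a short comparison of the minima for fixed $(\alpha,\beta)$, giving the converse). Your single ``recognition lemma'' --- the spectrum determines $(\alpha/r^2,\beta/r^2)$, ordered for $n\neq 2p$ and unordered for $n=2p$ --- is exactly Proposition~\ref{sskmr} combined with Theorem~\ref{varpar1}, and it does yield both directions at once, which is a slightly tidier packaging. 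Two remarks. First, the ``main obstacle'' you anticipate (arithmetic coincidences between the two progressions corrupting the peeling) does not actually arise in the paper's argument: one only has to decide \emph{once} which branch realizes the minimum, and this is read off from the multiplicity of the bottom eigenvalue, which by Proposition~\ref{mults} is one of the three pairwise distinct numbers $\binom{n+1}{p+1}$, $\binom{n+1}{p}$, $\binom{n+2}{p+1}$ when $n\neq 2p$; in the coincidence case the minimum itself already determines both parameters, and after identifying one parameter one subtracts the entire corresponding branch as a \emph{weighted} set, so the remainder is exactly the other branch regardless of any further overlaps. Hence the Weyl-asymptotics fallback and the worry about spurious re-decompositions are unnecessary. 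Second, your exclusion of $p=n$ is a legitimate catch: there $\delta d=0$ on top-degree forms, $\beta$ does not enter the spectrum, and the ``only if'' direction fails; the paper's blanket hypothesis $1\leq p\leq n$ overlooks this degenerate case.
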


For $c>0$ such that $r'=cr$ we can show: 

\begin{Prop}
The isospectrality of the two above mentioned operators is equivalent to $(\alpha', \beta')=(c^2\alpha, c^2\beta)$ (for $n=2p$, again up to exchange of the roles of $\alpha$ and $\beta$).
\end{Prop}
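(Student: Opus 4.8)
The plan is to reduce the statement, via a scaling identity for the spectrum on spheres, to the already-established comparison of two generalized Hodge--Laplacians on one and the same round sphere. First I would read off from Theorem \ref{thmspt} that every eigenvalue of $F_{\alpha\beta}^{S^n_r}$ has the form $\alpha\,a_j/r^2$ or $\beta\,b_j/r^2$, where the nonnegative numbers $a_j,b_j$ and their multiplicities depend only on $n$ and $p$ (they are the eigenvalues of $d\delta$, resp.\ $\delta d$, on the unit sphere $S^n_1$ in degree $p$). From this I would deduce the scaling identity
\[
\mathrm{Spec}\!\left(F_{\alpha'\beta'}^{S^n_{cr}}\right)=\mathrm{Spec}\!\left(F_{\alpha'/c^2,\,\beta'/c^2}^{S^n_r}\right)
\]
as multisets: passing from radius $r$ to $cr$ multiplies every eigenvalue by $c^{-2}$ and leaves multiplicities unchanged, and since both families $\{a_j\}$ and $\{b_j\}$ are rescaled by the same factor, any coincidences between $\alpha$-eigenvalues and $\beta$-eigenvalues are preserved, so the multiset structure carries over faithfully.

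With this in hand, the ``if'' direction is immediate: if $(\alpha',\beta')=(c^2\alpha,c^2\beta)$, the right-hand side of the displayed identity is exactly $\mathrm{Spec}(F_{\alpha\beta}^{S^n_r})$, so the operators are isospectral; and when $n=2p$, the Proposition stating that the spectrum is then symmetric in $\alpha$ and $\beta$ shows that $(\alpha',\beta')=(c^2\beta,c^2\alpha)$ works just as well.

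For the converse I would argue as follows. Assume $F_{\alpha\beta}^{S^n_r}$ and $F_{\alpha'\beta'}^{S^n_{cr}}$ are isospectral. By the scaling identity this is the same as saying that $F_{\alpha\beta}^{S^n_r}$ and $F_{\alpha'/c^2,\,\beta'/c^2}^{S^n_r}$ --- two operators of the family living on the \emph{same} sphere $S^n_r$ --- have equal spectra. Then I would invoke the preceding Proposition applied to this pair: since the two radii coincide, that Proposition forces $\alpha=\alpha'/c^2$ and $\beta=\beta'/c^2$, i.e.\ $(\alpha',\beta')=(c^2\alpha,c^2\beta)$; for $n=2p$ it gives the conclusion only up to interchanging $\alpha$ and $\beta$, which is precisely the claimed ambiguity.

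The only genuine work is to extract the scaling identity cleanly from the formulas of Theorem \ref{thmspt}, and to make sure the preceding Proposition is applicable verbatim when the two radii agree (in particular that nothing in its proof secretly used $r\neq r'$). I would also treat the extreme degrees $p=0$ and $p=n$ separately, since there $\delta d$, respectively $d\delta$, vanishes identically, so only one of $\alpha,\beta$ enters the spectrum and the statement must be read as a constraint on that single parameter. I do not expect any obstacle beyond this bookkeeping: the real content --- that equal radii force equal parameters, up to the $n=2p$ symmetry --- has already been isolated in the preceding Proposition, and the scaling identity does the rest.
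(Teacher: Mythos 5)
Your argument is correct and essentially the same as the paper's: the scaling identity you derive is exactly Proposition \ref{sskmr}, and reducing to two operators on a sphere of one fixed radius and then invoking the fixed-radius comparison (Theorem \ref{varpar1}) is precisely how the paper proceeds. One small slip: the eigenvalue formulas $\frac{\beta}{r^2}(k+p)(k+n-p-1)$ and $\frac{\alpha}{r^2}(k+p)(k+n-p+1)$ you rely on come from the sphere result (Theorem \ref{thms} together with Proposition \ref{sskmr}), not from Theorem \ref{thmspt}, which concerns flat tori.
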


\subsubsection*{Acknowledgements} 
I would like to express my special thanks to Prof.\ Dr.\ Christian Bär for proposing the topic of this paper, for lots of stimulating conversations and many useful hints and references. It is my pleasure to thank Dr. Andreas Hermann for his advice and many valuable suggestions. Moreover, I am grateful to Matthias Ludewig for his support and for numerous helpful discussions.


\section{Preliminaries}

Let $M$ be an $n$-dimensional compact Riemannian manifold, $\alpha, \beta > 0$ and $0 \leq p \leq n$. 

\medskip
Throughout the paper, we write $\Omega^p(M) := \Gamma(M,\Lambda^pT^*M)$ for the space of all smooth differential forms of degree $p$ on $M$ and, for its complexification, we write $\Omega^p(M, \mathbb{C}) := \Omega^p(M) \otimes \mathbb{C}$.
We denote by $\Omega^p_{L^2}(M):= L^2(M, \Lambda^pT^*M)$ the completion of the smooth $p$-forms with respect to the $L^2$-scalar product $(\cdot, \cdot)$, and we write $\Omega^p_{L^2}(M, \mathbb{C}) := \Omega^p_{L^2}(M) \otimes \mathbb{C}$ for its complexification.

\medskip
Now we introduce the central object of this paper.

\begin{deff}
We define the operator
\begin{align*}
F_{\alpha \beta, p}^M := \alpha d\delta + \beta \delta d
\end{align*}
on the space $\Omega^p(M)$.  Here, $d$ is the exterior derivative on differential forms and $\delta$ the formal adjoint of $d$, i.e.\ for all $\omega \in \Omega^p_c(M)$ and $\eta \in \Omega^{p+1}_c(M)$ we have $(d\omega, \eta) = (\omega, \delta \eta)$. We simply write $F_{\alpha \beta}^M$ if it is clear from the context on which forms the operator is considered.\\
The Hodge-Laplace operator on $\Omega^p(M)$ we denote by $\Delta^M_p:= F_{11,p}^M = d\delta + \delta d$.
\end{deff}

\begin{bem} \label{syor} 
$ $
\begin{enumerate}
\item The principal symbol of $F_{\alpha \beta}^M$ is
\begin{align*}
- \beta |\xi|^2 \mathrm{id} - (\alpha - \beta) \xi \wedge (\xi^{\sharp} \lrcorner \ \cdot) ,
\end{align*}
where $\xi \in T^*M$. Here $\xi^{\sharp} \lrcorner \omega := \omega(\xi^{\sharp}, \cdot, ..., \cdot)$ for $\omega \in \Omega^p(M)$ and $\sharp$ denotes the musical isomorphism which assigns to each $\xi \in T^*M$ the uniquely determined vector $\xi \in TM$ such that $g(\xi^{\sharp}, \cdot) = \xi$. For each $\xi \neq 0$, the principal symbol is invertible with inverse map
\begin{align*}
- \frac{1}{\beta|\xi|^2} \mathrm{id} + \frac{\alpha- \beta}{\alpha \beta |\xi|^4} \xi \wedge (\xi^{\sharp} \lrcorner \ \cdot) .
\end{align*}
This shows that $F_{\alpha \beta}^M$ is elliptic.
\item It is easy to check that $F_{\alpha \beta}^M$ is formally self-adjoint. It then follows from the general theory of elliptic operators on compact manifolds that $F_{\alpha \beta}^M$ considered as an unbounded operator on $\Omega_{L^2}^p(M)$ with domain $\Omega_{L^2}^p(M)$ has a unique self-adjoint extension, with domain the Sobolev space $H^2(M, \Lambda^pT^*M)$. As such, it has a discrete spectrum, finite-dimensional eigenspaces, and its eigenvalues tend to infinity.
\end{enumerate}
\end{bem}

\begin{bem}
The exterior differential $d$ and the co-derivative $\delta$ can be written in terms of the connection on forms in the following way: Let $\{e_1, ..., e_n\}$ be a local orthonormal basis of $TM$ and $\{e^1, ..., e^n\}$ the associated dual basis of $T^*M$. Then 
\begin{align} \label{Formulasddelta}
d=\sum_{i=1}^n e^i \wedge \nabla_{e_i} \ \ \ \text{and} \ \ \
\delta = - \sum_{i=1}^n e_i \lrcorner \nabla_{e_i} .
\end{align}
\end{bem}

In order to understand the spectrum of $F_{\alpha \beta}^M$ (in the case that $M$ is compact) as a set of eigenvalues with associated multiplicities, we introduce the concept of a weighted set, which is essentially a set (here we specialize to subsets of $\mathbb{C}$) in which elements can be contained more than once; this can be modelled by a function on $\mathbb{C}$ with values in $\mathbb{N}_0$; to $\lambda \in \mathbb{C}$, the function assigns the number of times that $\lambda$ is contained the set described by it.

\begin{deff} $ $
\begin{enumerate} 
\item A weighted set is a function $W : \mathbb{C} \rightarrow \mathbb{N}_0$. 
\item If $W$ has a countable support $\mathrm{supp}(W) := \{\lambda \in \mathbb{C} \mid W(\lambda) \neq 0\} = \{ \lambda_i \mid i \in \mathbb{N}\}$, we write 
\begin{align*}
W := \{ (\lambda_1, W(\lambda_1)), (\lambda_2, W(\lambda_2)), ... \} 
\end{align*}
and respectively
\begin{align*}
W := \{ \underbrace{\lambda_1, ..., \lambda_1}_{W(\lambda_1)\text{-times}}, \underbrace{\lambda_2, ...,\lambda_2}_{W(\lambda_2)\text{-times}}, ...\} .
\end{align*} 
\item Let $W$ and $W'$ be weighted sets. Then their weighted union $W \Cup W'$ is defined as:
\begin{align*}
(W \Cup W')(\lambda) := W(\lambda) + W'(\lambda)
\end{align*}
for all $\lambda \in \mathbb{C}$.
\item In addition, we introduce the following notation for $m, m' \in \mathbb{N}_0$:
\begin{align*}
W \ ^m \! \Cup^{m'} W' := \underbrace{W \Cup ... \Cup W}_{m \ \text{-times}} \Cup \underbrace{W' \Cup ... \Cup W'}_{m' \ \text{-times}} .
\end{align*}
For $m=1$ or $m'=1$ we usually omit the index. 
\item The difference of $W$ and $W'$ is the weighted set $W \setminus W'$, which is defined by 
\begin{align*}
(W \setminus W')(\lambda) := \max \{W(\lambda) -W'(\lambda), 0 \}
\end{align*}
for all $\lambda \in \mathbb{C}$.
\item The minimum of a weighted set $W$ with support $\mathrm{supp}(W) \subseteq \mathbb{R}$ is given by $\min (W) := \min \! \big(\mathrm{supp}(W)\big)$ .
\item For $r \in \mathbb{R}^*$ let $r W(\lambda) := W(\frac{\lambda}{r})$ for all $\lambda \in \mathbb{C}$.
\end{enumerate}
\end{deff}

\begin{deff} Let $M$ be compact.
\begin{enumerate}
\item We call
\begin{align*}
\mathrm{Eig}(F_{\alpha \beta,p}^M, \lambda) := \{ \omega \in \Omega^p(M) \mid F_{\alpha \beta,p}^M\omega = \lambda \omega \}
\end{align*}
the eigenspace of $F_{\alpha \beta,p}^M$ to the eigenvalue $\lambda$.
\item The spectrum of $F_{\alpha \beta,p}^M$ is the weighted set for $\lambda \in \mathbb{C}$ defined by
\begin{align*}
\mathrm{Spec}(F_{\alpha \beta,p}^M)(\lambda) := \dim \! \big(\mathrm{Eig}(F_{\alpha \beta,p}^M, \lambda)\big) .
\end{align*}
\end{enumerate}
\end{deff}

\begin{bem}
The measure 
\begin{align*}
\sum_{\lambda \in \mathbb{C}} \mathrm{Spec}(F_{\alpha \beta,p}^M)(\lambda) \delta_{\lambda},
\end{align*}
where $\delta_{\lambda}$ is the Dirac measure at $\lambda \in \mathbb{C}$, is exactly the spectral measure of $F_{\alpha \beta,p}^M$.
\end{bem}

\begin{prop} \label{symn=2}
Let $M$ be an $n$-dimensional orientable Riemannian manifold, $\alpha, \beta > 0$ and $0 \leq p \leq n$. Then the spectra of $F_{\alpha \beta, p}^M$ and $F_{\beta \alpha, n-p}^M$ coincide.
\end{prop}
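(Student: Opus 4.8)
The natural tool is the Hodge star operator. Since $M$ is orientable, fix an orientation and let $\ast \colon \Omega^q(M) \to \Omega^{n-q}(M)$ denote the associated Hodge star; for every $q$ it is a vector bundle isomorphism which is a pointwise isometry, so in particular it is bijective and preserves the $L^2$-scalar product. The plan is to show that $\ast$ intertwines $F_{\alpha \beta, p}^M$ with $F_{\beta \alpha, n-p}^M$ and then to read off the equality of the spectra from this.

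First I would recall the standard commutation relations of $\ast$ with $d$ and $\delta$ on an oriented Riemannian $n$-manifold, namely $\ast\, d = \pm\, \delta\, \ast$ and $\ast\, \delta = \pm\, d\, \ast$, where in each relation the sign depends only on $n$ and on the degree of the form. Composing these two relations yields, as operators on $\Omega^p(M)$,
\[
\ast\,(d\delta) = c_1\,(\delta d)\,\ast \quad\text{and}\quad \ast\,(\delta d) = c_2\,(d\delta)\,\ast
\]
for signs $c_1, c_2 \in \{-1, 1\}$ depending on $n$ and $p$. The delicate point is to verify that in fact $c_1 = c_2 = 1$. I expect this sign bookkeeping to be the only genuine (if routine) obstacle; it can be settled either by tracking the signs in the two relations above, or, more conceptually, by noting that $d\delta$ and $\delta d$ are non-negative operators, since $(d\delta\,\omega, \omega) = \|\delta\omega\|^2 \ge 0$ and $(\delta d\,\omega, \omega) = \|d\omega\|^2 \ge 0$, whereas $\ast$ preserves the scalar product, so a non-negative operator cannot be conjugated by $\ast$ into its negative.

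Granting $c_1 = c_2 = 1$, we obtain
\[
\ast\, F_{\alpha \beta, p}^M = \alpha\, \ast(d\delta) + \beta\, \ast(\delta d) = \alpha\,(\delta d)\,\ast + \beta\,(d\delta)\,\ast = \bigl(\beta\, d\delta + \alpha\,\delta d\bigr)\,\ast = F_{\beta \alpha, n-p}^M\, \ast,
\]
so $\ast$ conjugates the two operators. Since $\ast$ is bijective, for each $\lambda$ it restricts to a linear isomorphism $\mathrm{Eig}(F_{\alpha \beta, p}^M, \lambda) \to \mathrm{Eig}(F_{\beta \alpha, n-p}^M, \lambda)$, so these eigenspaces have the same dimension for every $\lambda$; hence $\mathrm{Spec}(F_{\alpha \beta, p}^M) = \mathrm{Spec}(F_{\beta \alpha, n-p}^M)$ as weighted sets (both being well defined when $M$ is compact). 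As a side remark, the same conclusion can be reached for compact $M$ in an essentially sign-free way by combining the Hodge decomposition --- which shows $F_{\alpha \beta, p}^M$ acts as $\alpha\, \Delta_p^M$ on exact forms, as $\beta\, \Delta_p^M$ on coexact forms, and as $0$ on harmonic forms --- with the facts that $\ast$ exchanges exact and coexact forms, fixes the harmonic forms, and commutes with the Hodge-Laplacian ($\ast\, \Delta_p^M = \Delta_{n-p}^M\, \ast$), matching these pieces with the corresponding decomposition of $F_{\beta \alpha, n-p}^M$.
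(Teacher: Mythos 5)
Your proof is correct and follows essentially the same route as the paper: conjugating by the Hodge star to obtain $\ast F_{\alpha\beta,p}^M = F_{\beta\alpha,n-p}^M \ast$ and then transporting eigenspaces bijectively. The only difference is cosmetic --- the paper fixes the signs by composing the explicit identities $*_{n-p}*_p = (-1)^{p(n-p)}$ and $\delta = (-1)^{p+1}*_{p+1}d*_p^{-1}$, while you pin them down via the non-negativity of $d\delta$ and $\delta d$, which is a perfectly valid shortcut.
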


\begin{proof}
Let $*_p : \Omega^p(M) \rightarrow \Omega^{n-p}(M)$ be the Hodge-Star operator on $p$-forms. Using the properties
\begin{align*}
*_{n-p}*_p = (-1)^{p(n-p)}  \hspace{1em} \text{and} \hspace{1em} \delta = (-1)^{p+1} *_{p+1}d*_p^{-1},
\end{align*}
one can show that
\begin{align*}
*_p F_{\alpha \beta, p}^M *_p^{-1} = F_{\beta \alpha, n-p}^M .
\end{align*}
Using this, it is then easy to see that $*$ gives a bijection between the eigenspaces $\mathrm{Eig}(F_{\alpha \beta, p}^M, \lambda)$ and $\mathrm{Eig}(F_{\beta \alpha, n-p}^M, \lambda)$.\end{proof}


\section{Spectrum on flat tori}

In this section, let $\alpha, \beta > 0$ and $1 \leq p \leq n$. We will investigate and determine explicitely the spectrum of  $F_{\alpha \beta,p}^{T^n}$ on $n$-dimensional flat tori $T^n$.

\begin{deff}
Let $B:= \{b_1, ..., b_n\}$ be a basis of $\mathbb{R}^n$ and $\{b^1, ..., b^n\}$ the associated dual basis of $({\mathbb{R}^n})^*$, i.e.\ $b^i(b_j) = \delta^i_j$ for all $i,j \in \{1, ..., n\}$. Then
\begin{align*}
\Lambda_B := \mathbb{Z}b_1 + ... + \mathbb{Z}b_n
\end{align*}
is the lattice induced by $B$ and
\begin{align*}
\Lambda_B^* := \mathbb{Z}b^1 + ... + \mathbb{Z}b^n = \{ l \in (\mathbb{R}^n)^* \mid \forall \lambda \in \Lambda : l(\lambda) \in \mathbb{Z} \} 
\end{align*}
the dual lattice of $\Lambda_B$.
\end{deff}

Let $\Lambda$ be a lattice in $\mathbb{R}^n$ and $\pi : \mathbb{R}^n \rightarrow \mathbb{R}^n/\Lambda : x \mapsto [x]$ the canonical projection on $\mathbb{R}^n/\Lambda$. Then $(\mathbb{R}^n/\Lambda, g)$ is an $n$-dimensional flat torus associated to the lattice $\Lambda$, which we often denote by $T^n$, where $\mathbb{R}^n/\Lambda$ is endowed with the Riemannian metric $g$ induced by the standard metric $g_{\mathrm{std}}$ on $\mathbb{R}^n$ via $g_{\mathrm{std}} = \pi^* g$. 
We remark that flat tori are Lie groups with respect to the usual addition. \\
We write $\{\partial_1, ..., \partial_n\}$ and $\{dx^1, ..., dx^n\}$ for the global bases of $TT^n$ and $T^*T^n$ induced by the standard basis of $\mathbb{R}^n$. \\
Due to the existence of a global basis of $T^*T^n$, for flat tori $T^n$ we have the following isomorphism:
\begin{align} \label{tensoriso} \begin{split}
L^2(T^n,\mathbb{C}) \otimes \Lambda^p(\mathbb{R}^n)^* &\longrightarrow \Omega^p_{L^2}(T^n, \mathbb{C}) \\
\sum_{i_1, ..., i_p=1}^n f_{i_1...i_p} e^{i_1} \wedge ... \wedge e^{i_p} &\longmapsto \sum_{i_1, ..., i_p=1}^n f_{i_1...i_p} dx^{i_1} \wedge ... \wedge dx^{i_p} , \end{split}
\end{align}
where $\{e^1, ..., e^n\}$ is the dual basis of the standard basis in $\mathbb{R}^n$.

\begin{deff}
Let $\{e^1, ..., e^n\}$ be the dual basis of the global standard basis of $T\mathbb{R}^n$ and $T^n$ a flat torus. Then 
\begin{align*}
\Omega^p_{\mathrm{par}}(T^n) &:= \{ \omega \in \Omega^p(T^n) \mid \nabla \omega = 0\} \\ 
&= \left\{ \left. \sum_{i_1, ..., i_p=1}^n a_{i_1...i_p} dx^{i_1} \wedge ... \wedge dx^{i_p} \ \right| \  a_{i_1...i_p} \in \mathbb{R} \right\}
\end{align*}
and
\begin{align*}
\Omega^p_{\mathrm{par}}(\mathbb{R}^n) := \left\{ \left. \sum_{i_1, ..., i_p=1}^n a_{i_1...i_p} e^{i_1} \wedge ... \wedge e^{i_p} \ \right| \  a_{i_1...i_p} \in \mathbb{R} \right\}
\end{align*}
are the spaces of all parallel $p$-forms on $T^n$ and $\mathbb{R}^n$ respectively. \\
Furthermore let $\Omega^p_{\mathrm{par}}(T^n, \mathbb{C}) := \Omega^p_{\mathrm{par}}(T^n) \otimes \mathbb{C}$ and $\Omega^p_{\mathrm{par}}(\mathbb{R}^n, \mathbb{C}) := \Omega^p_{\mathrm{par}}(\mathbb{R}^n) \otimes \mathbb{C}$.
\end{deff}

\begin{bem} \label{RemarkIsoFormsOnRn}
For flat tori $T^n$, we have
\begin{align*}
\Omega^p_{\mathrm{par}}(T^n) &\cong \Lambda^p(\mathbb{R}^n)^*  \\
\text{by virtue of} \ \ \ \ \ \ \sum_{i_1, ...i_p=1}^n a_{i_1...i_p} dx^{i_1} \wedge ... \wedge dx^{i_p} &\longmapsto \sum_{i_1, ..., i_p=1}^n a_{i_1, ..., i_p} e^{i_1} \wedge ... \wedge e^{i_p} ,
\end{align*}
where $\{e^1, ..., e^n\}$ is the dual basis of the standard basis of $\mathbb{R}^n$. We will identify both vector spaces without always stating it explicitly.  
\end{bem}

We need to know how $\delta$ acts on $\Omega^p(T^n, \mathbb{C})$ for flat tori $T^n$ with respect to the bases induced by the $dx^i$, $i \in \{1, ..., n\}$. To this end, let $\omega = \sum_{i_1, ..., i_p=1}^n \omega_{i_1...i_p} dx^{i_1} \wedge ... \wedge dx^{i_p} \in \Omega^p(T^n,\mathbb{C})$. Then one can check that
\begin{align} \label{delta2}
\delta \omega = - \sum_{i_1, ..., i_p = 1}^n \sum_{k=1}^p (-1)^{k-1} \partial_{i_k} \omega_{i_1...i_p} dx^{i_1} \wedge ... \wedge \widehat{dx^{ik}} \wedge ... \wedge dx^{i_p} . 
\end{align}

This is also true when $T^n$ and $\{dx^1, ..., dx^n\}$ are replaced by $\mathbb{R}^n$ and the dual basis $\{e^1, ..., e^n\}$ of the global standard basis of $T\mathbb{R}^n$, respectively.

\begin{bem} \label{5}
Let $T^n:= \mathbb{R}^n/\Lambda$ be the flat torus to the lattice $\Lambda$.
\begin{enumerate}
\item  For $l \in \Lambda^*$ the funcions $\chi_l : T^n \rightarrow \mathbb{C} : [x] \mapsto e^{2\pi i l(x)}$ are exactly the characters of the Lie group $T^n$ and form a basis of $L^2(T^n, \mathbb{C})$, according to the Peter-Weyl theorem {\normalfont (\cite[p. 250]{pw})}. \\
The $\chi_l$ are well defined, since for all $l \in \Lambda^*$ the map $\mathbb{R}^n \rightarrow \mathbb{C} : x \mapsto e^{2\pi i l(x)}$ is $\Lambda$-periodic, more precisely: For $\lambda \in \Lambda$, we have 
\begin{align*}
\chi_l([x + \lambda])= e^{2\pi i l(x+\lambda)} = e^{2\pi i l(x)} e^{2 \pi i l(\lambda)} = e^{2 \pi i l(x)} = \chi_l([x]) 
\end{align*} 
since $l(\lambda) \in \mathbb{Z}$.
\item For $l \in \Lambda^*$ the $\chi_l$ (and multiples of it) are exactly the eigenfunctions of $\Delta_0$ on $\mathcal{C}^{\infty}(T^n, \mathbb{C})$ to the eigenvalues $4\pi^2|l|^2$ and we have
\begin{align} \label{Specl0}
\mathrm{Spec}(\Delta_0^{T^n}) = \mathop{\Cup}_{l \in \Lambda^*} \{4\pi^2|l|^2\}
\end{align}
 	as a weighted set.
\end{enumerate}
\end{bem}

\subsection{Eigendecomposition}

In this section, let $\Lambda$ be a lattice in $\mathbb{R}^n$, $n > 0$ and $T^n := \mathbb{R}^n/\Lambda$ the associated flat torus. For $\alpha, \beta > 0$ and $1 \leq p \leq n$ we calculate the eigenvalues of $F_{\alpha \beta,p}^{T^n}$ together with the associated eigenspaces. \\

Let $\omega \in \Omega^p(T^n, \mathbb{C})$. By the isomorphism \ref{tensoriso} and Remark~\ref{5} (i), we can write $\omega = \sum_{l \in \Lambda^*} \chi_l \omega^l$, where $\omega^l \in \Lambda^p (\mathbb{R}^n)^*$. Using \eqref{Formulasddelta}, a straightforward calculation then shows
\begin{align*}
d\delta \omega = 4\pi^2 \sum_{l \in \Lambda^*} \chi_l l \wedge (l^{\sharp} \lrcorner \omega^l)
\end{align*}
and analogously
\begin{align*}
\delta d \omega &= 4\pi^2 \sum_{l \in \Lambda^*} \chi_l \big( - l \wedge (l^{\sharp} \lrcorner \omega^l) + |l|^2 \omega^l \big) .
\end{align*}
Put together, we obtain 
\begin{align*}
F_{\alpha \beta}^{T^n} \omega = 4\pi^2 \sum_{l \in \Lambda^*} \chi_l \Big((\alpha - \beta) l \wedge (l^{\sharp} \lrcorner \omega^l) + \beta |l|^2 \omega^l \Big) .
\end{align*}
Due to Remark \ref{5}, comparison of coefficients gives that the eigenvalue equation 
\begin{align*}
F_{\alpha \beta}^{T^n} \omega = \lambda \omega = \lambda \sum_{l \in \Lambda^*} \chi_l \omega^l 
\end{align*}
is satisfied for a $\lambda \in \mathbb{R}$ if and only if for all $l \in \Lambda^*$, we have
\begin{align*} 
4 \pi^2 \big( (\alpha - \beta) l \wedge (l^{\sharp} \lrcorner \omega^l) + \beta |l|^2 \omega^l \big) = \lambda \omega^l . \ \ \ \ \ \ \tag{$EG_{\lambda, l}$}
\end{align*}

\begin{deff}
For $l \in \Lambda^*$ and $\gamma \in \mathbb{R}$, we set
\begin{align*}
\lambda_{\gamma}^l &:= 4\pi^2\gamma|l|^2
\end{align*}
and
\begin{align*}
V_l^p &:= \chi_l \{ l \wedge \eta \mid \eta \in \Omega^{p-1}_{\mathrm{par}}(T^n) \}, \\
W_l^p &:= \chi_l \{ \omega \in \Omega^p_{\mathrm{par}}(T^n) \mid l^{\sharp} \lrcorner \omega = 0 \} .
\end{align*}
\end{deff}

\begin{bem}
It is obvious that for all $k,l \in \Lambda^*$ with $k \neq l$ the spaces $V_k^p$, $V_l^p$, $W_k^p$ and $W_l^p$ are pairwise orthogonal. Therefore $V_k^p \oplus V_l^p$, $W_k^p \oplus W_l^p$, $V_k^p \oplus W_l^p$ and $V_k^p \oplus W_k^p$ are direct sums.
\end{bem}

Now let $l \in \Lambda^*$. We notice that $l \wedge \eta$, with $\eta \in \Omega^{p-1}_{\mathrm{par}}(T^n)$ and $l^{\sharp} \lrcorner \eta =0$, satisfies the equation $(EG_{\lambda_{\alpha}^l,l})$ and $\omega^l \in \Omega^p_{\mathrm{par}}(T^n)$, with $l^{\sharp} \lrcorner \omega^l = 0$, the equation $(EG_{\lambda_{\beta}^l,l})$. Therefore, we have
\begin{align*}
F_{\alpha \beta}^{T^n} \chi_l l \wedge \eta = \lambda_{\alpha}^l\chi_l l \wedge \eta \ \  \text{and} \ \ \ F_{\alpha \beta}^{T^n} \chi_l\omega^l = \lambda_{\beta}^l \chi_l \omega^l .
\end{align*}
Hence, if $\eta, \omega^l \neq 0$, then $\chi_l l \wedge \eta$ and $\chi_l\omega^l$ are eigenforms of $F_{\alpha \beta}^{T^n}$ to the eigenvalues $\lambda_{\alpha}^l$ and $\lambda_{\beta}^l$, respectively. Because

\begin{align*}
\dim \big( \{ l \wedge \eta \mid \eta \in \Omega^{p-1}_{\mathrm{par}}(T^n, \mathbb{C}) \} \big) &= \binom{n-1}{p-1} \\
\dim \big( \{ \omega \in \Omega^p_{\mathrm{par}}(T^n, \mathbb{C}) \mid l^{\sharp} \lrcorner \omega = 0 \} \big) &= \binom{n-1}{p} ,
\end{align*}
we already know that
\begin{align*}
\left( \mathop{\Cup}_{l \in \Lambda^*} \{\lambda_{\alpha}^l\}_{\binom{n-1}{p-1}} \right) \Cup \left( \mathop{\Cup}_{l \in \Lambda^*} \{\lambda_{\beta}^l\}_{\binom{n-1}{p}} \right) \subseteq \mathrm{Spec}(F_{\alpha \beta, p}^{T^n})
\end{align*}
and that, for all $l \in \Lambda^*$,
\begin{align} \label{vwie}
V_l^p \subseteq \mathrm{Eig}(F_{\alpha \beta, p}^{T^n}, \lambda_{\alpha}^l) \ \ \ \text{and} \ \ \
W_l^p \subseteq \mathrm{Eig}(F_{\alpha \beta, p}^{T^n}, \lambda_{\beta}^l) .
\end{align}

In fact, with $\lambda_{\alpha}^l$ and $\lambda_{\beta}^l$ for $l \in \Lambda^*$, we already found the whole spectrum of $F_{\alpha \beta}^{T^n}$.

\begin{thm} \label{thmspt}
Let $\alpha, \beta>0$ and $1 \leq p \leq n$. The spectrum of the operator $F_{\alpha \beta, p}^{T^n}$ is given by
\begin{align*}
\mathrm{Spec}(F_{\alpha \beta, p}^{T^n}) = \left( \mathop{\Cup}_{l \in \Lambda^*} \{\lambda_{\alpha}^l\}_{\binom{n-1}{p-1}} \right) \Cup \left( \mathop{\Cup}_{l \in \Lambda^*} \{\lambda_{\beta}^l\}_{\binom{n-1}{p}} \right) 
\end{align*}
and the associated eigenspaces are for $k \in \Lambda^*$
\begin{align} \label{eig1}
\mathrm{Eig}(F_{\alpha \beta, p}^{T^n}, \lambda_{\alpha}^k) = \bigoplus_{\substack{l \in \Lambda^*: \\ |l|=|k|}} V_l^p \ \oplus \!\!\! \bigoplus_{\substack{l \in \Lambda^*: \\ |l| = \sqrt{\frac{\alpha}{\beta}}|k|}} \!\!\!\! W_l^p 
\end{align}
and
\begin{align} \label{eig2}
\mathrm{Eig}(F_{\alpha \beta, p}^{T^n}, \lambda_{\beta}^k) = \!\!\!\! \bigoplus_{\substack{l \in \Lambda^*: \\ |l|= \sqrt{\frac{\beta}{\alpha}}|k|}} \!\!\!\! V_l^p \ \oplus \ \bigoplus_{\substack{l \in \Lambda^*: \\ |l| = |k|}} W_l^p . 
\end{align}
\end{thm}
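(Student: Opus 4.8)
The plan is to show that the eigenspaces already exhibited exhaust the whole space, so that the two families $\{\lambda_\alpha^l\}$ and $\{\lambda_\beta^l\}$ account for all eigenvalues. The starting point is the Fourier/Peter--Weyl decomposition obtained above: every $\omega\in\Omega^p(T^n,\mathbb C)$ decomposes as $\omega=\sum_{l\in\Lambda^*}\chi_l\omega^l$ with $\omega^l\in\Lambda^p(\mathbb C^n)^*$, and the eigenvalue equation $F_{\alpha\beta}^{T^n}\omega=\lambda\omega$ is equivalent to the system $(EG_{\lambda,l})$ for all $l\in\Lambda^*$. So the problem reduces to a purely linear-algebraic one on the finite-dimensional space $\Lambda^p(\mathbb C^n)^*$: for fixed $l$, diagonalize the endomorphism $A_l:\eta\mapsto 4\pi^2\big((\alpha-\beta)\,l\wedge(l^\sharp\lrcorner\eta)+\beta|l|^2\eta\big)$.

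First I would treat the case $l=0$ separately: there $A_0=0$, so $(EG_{\lambda,0})$ forces $\lambda\omega^0=0$, i.e.\ $\omega^0$ contributes only to the eigenvalue $0=\lambda_\alpha^0=\lambda_\beta^0$, consistent with the claimed formulas (and with $V_0^p\oplus W_0^p=\Lambda^p$). For $l\neq 0$, the key observation is that the operator $P_l:=\tfrac{1}{|l|^2}\,l\wedge(l^\sharp\lrcorner\cdot)$ is a projection on $\Lambda^p(\mathbb C^n)^*$: one checks $P_l^2=P_l$ using $l^\sharp\lrcorner(l\wedge\mu)=|l|^2\mu-l\wedge(l^\sharp\lrcorner\mu)$. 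Its image is exactly $\{l\wedge\eta\}$ (of dimension $\binom{n-1}{p-1}$) and its kernel is exactly $\{\omega:l^\sharp\lrcorner\omega=0\}$ (of dimension $\binom{n-1}{p}$); these two dimensions sum to $\binom{n}{p}=\dim\Lambda^p(\mathbb C^n)^*$, so $\Lambda^p(\mathbb C^n)^*=\operatorname{im}P_l\oplus\ker P_l$. In terms of $P_l$ we have $A_l=4\pi^2\big((\alpha-\beta)|l|^2 P_l+\beta|l|^2\,\mathrm{id}\big)=4\pi^2|l|^2\big(\alpha P_l+\beta(\mathrm{id}-P_l)\big)$, so $A_l$ acts as multiplication by $4\pi^2\alpha|l|^2=\lambda_\alpha^l$ on $\operatorname{im}P_l$ and by $4\pi^2\beta|l|^2=\lambda_\beta^l$ on $\ker P_l$, with no other eigenvalues. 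Hence for each $l$ the $\omega^l$-component of an eigenform to eigenvalue $\lambda$ must lie in $\operatorname{im}P_l$ if $\lambda=\lambda_\alpha^l$, in $\ker P_l$ if $\lambda=\lambda_\beta^l$, and must vanish otherwise.

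Assembling this over all $l$: a $p$-form $\omega=\sum_l\chi_l\omega^l$ lies in $\mathrm{Eig}(F_{\alpha\beta,p}^{T^n},\lambda)$ iff for every $l$ with $\omega^l\neq 0$ either $\lambda_\alpha^l=\lambda$ and $\omega^l\in\operatorname{im}P_l$, i.e.\ $\chi_l\omega^l\in V_l^p$, or $\lambda_\beta^l=\lambda$ and $\omega^l\in\ker P_l$, i.e.\ $\chi_l\omega^l\in W_l^p$. Now $\lambda_\alpha^l=\lambda_\alpha^k$ iff $|l|=|k|$, and $\lambda_\beta^l=\lambda_\alpha^k$ iff $|l|=\sqrt{\alpha/\beta}\,|k|$; these describe exactly the index sets appearing in \eqref{eig1}, and symmetrically for \eqref{eig2}. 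Using the pairwise orthogonality of the $V_l^p$ and $W_l^p$ recorded in the Remark, the sums are direct and we obtain the stated eigenspace formulas; counting dimensions $\binom{n-1}{p-1}$ and $\binom{n-1}{p}$ per $l$ then yields the spectrum formula, since conversely every $\lambda_\alpha^l$ and $\lambda_\beta^l$ does occur (as already shown before the theorem). I expect the only mildly delicate point to be the bookkeeping when $\lambda_\alpha^l$ and $\lambda_\beta^{l'}$ coincide for different $l,l'$ (which happens precisely on the overlap conditions $|l'|=\sqrt{\alpha/\beta}|l|$), making sure the $V$- and $W$-parts of a single eigenspace are combined without double counting; the projection argument itself is routine multilinear algebra.
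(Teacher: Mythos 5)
Your proposal is correct and follows essentially the same route as the paper: reduce via the Peter--Weyl/Fourier decomposition to the per-mode finite-dimensional problem, split $\Lambda^p(\mathbb{C}^n)^*$ into $\{l\wedge\eta\}\oplus\{\omega: l^\sharp\lrcorner\,\omega=0\}$, read off that the two summands carry the eigenvalues $\lambda_\alpha^l$ and $\lambda_\beta^l$, and then match eigenvalues across modes to assemble the eigenspaces. The only (cosmetic) difference is that you exhibit this splitting as the image/kernel decomposition of the projection $P_l=\tfrac{1}{|l|^2}l\wedge(l^\sharp\lrcorner\,\cdot)$, whereas the paper chooses an adapted basis $\{l,v_l^2,\dots,v_l^n\}$ and argues via $L^2$-density of the resulting spanning set; both yield the same conclusion.
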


\begin{proof}
By elliptic regularity, any eigenfunction $\omega$ of $F_{\alpha\beta}^{T^n}$ must be smooth so that we can write $\omega = \sum_{l \in \Lambda^*} \chi_l \omega^l$. In order for $\lambda \in \mathbb{R}$ to be an eigenvalue, for each $l \in \Lambda^*$, we get the necessary condition $(EG_{\lambda, l})$. Now because for each $l \in \Lambda$, we have
\begin{equation*}
  \Lambda^p(\mathbb{R}^n)^* = \bigl\{l \wedge \eta \mid \eta \in \Lambda^{p-1}(\mathbb{R}^n)^*\bigr\}  \oplus \bigl\{ \omega \in \Lambda^p (\mathbb{R}^n)^* \mid l^\sharp \lrcorner \omega = 0 \bigr\},
\end{equation*}
the statement follows with a view on Remark~\ref{RemarkIsoFormsOnRn}.
\end{proof}

\begin{bem} \label{bemnt1} $ $
\begin{enumerate}
\item We can express the spectrum of $F_{\alpha \beta, p}^{T^n}$ in terms of the spectrum of $\Delta_0^{T^n}$:
\begin{align*}
\mathrm{Spec}(F_{\alpha \beta, p}^{T^n}) &= \left( \mathop{\Cup}_{l \in \Lambda^*} \{\lambda_{\alpha}^l\}_{\binom{n-1}{p-1}} \right) \Cup \left( \mathop{\Cup}_{l \in \Lambda^*} \{\lambda_{\beta}^l\}_{\binom{n-1}{p}} \right) = \left( \mathop{\Cup}_{l \in \Lambda^*} \{ \lambda_{\alpha}^l \}_1 \right) \ \! ^{\binom{n-1}{p-1}}\Cup \ \! \! ^{\binom{n-1}{p}} \left( \mathop{\Cup}_{l \in \Lambda^*} \{\lambda_{\beta}^l\}_1 \right) \\
&= \alpha \left( \mathop{\Cup}_{l \in \Lambda^*} \{\lambda_1^l\}_1 \right)  \ \! ^{\binom{n-1}{p-1}}\Cup \ \! \! ^{\binom{n-1}{p}} \beta \left( \mathop{\Cup}_{l \in \Lambda^*} \{\lambda_1^l\}_1 \right) \\ 
&\!\stackrel{\eqref{Specl0}}{=} \alpha \cdot \mathrm{Spec}(\Delta_0^{T^n})  \ \! ^{\binom{n-1}{p-1}}\Cup \ \! \! ^{\binom{n-1}{p}} \beta \cdot \mathrm{Spec}(\Delta_0^{T^n}) .
\end{align*}
\item We have $\mathrm{Eig}(F_{\alpha \beta, p}^{T^n}, 0) = \Omega^p_{\mathrm{par}}(T^n)$. 
\item For $k \in \Lambda^*$, we have
\begin{align*}
\mathrm{Eig}(F_{\alpha \alpha, p}^{T^n}, \lambda_{\alpha}^k) = \bigoplus_{\substack{l \in \Lambda^*: \\ |l|=|k|}}(V_l^p \oplus W_l^p) = \bigoplus_{\substack{l \in \Lambda^*: \\ |l|=|k|}} \chi_l \Omega^p_{\mathrm{par}}(T^n).
\end{align*} 
\item Let the dual lattice $\Lambda^*$ now be rational, i.e.\ $|l|^2 \in \mathbb{Q}$ for all $l \in \Lambda^*$. Then for a generic choice of $\alpha$ and $\beta$ (i.e.\ if for example they are chosen at random from a finite interval with uniform distribution) the second summand of \eqref{eig1} and first summand of \eqref{eig2} disappear for $k \neq 0$: 
\begin{align*}
\mathrm{Eig}(F_{\alpha \beta, p}^{T^n}, \lambda_{\alpha}^k) = \bigoplus_{\substack{l \in \Lambda^*: \\ |l|=|k|}} V_l^p \ \ \ \text{and} \ \ \
\mathrm{Eig}(F_{\alpha \beta, p}^{T^n}, \lambda_{\beta}^k) = \bigoplus_{\substack{l \in \Lambda^*: \\ |l|=|k|}} W_l^p ,
\end{align*}
because generically $|l|^2= \frac{\alpha}{\beta}|k|^2$ and $|l|^2=\frac{\beta}{\alpha}|k|^2$ are satisfied for no $l \in \Lambda^*$. For instance, this happens for $\frac{\alpha}{\beta}$ irrational. In this case, the eigenvalues $\lambda_{\alpha}^k$ and $\lambda_{\beta}^l$ are different for all $k,l \in \Lambda^*$. \\
However, in the non-generic case that $\frac{\alpha}{\beta} \in \mathbb{Q}$, there can exist $k, l \in \Lambda^*$ such that $\lambda_{\alpha}^k = \lambda_{\beta}^l$. Consequently, in this case, ``mixed''\ eigenspaces can occur. 
\end{enumerate}
\end{bem}

\subsection{Multiplicities}
 
\begin{deff}
Let $\Lambda$ be a lattice. For $r \in \mathbb{R}_{\geq 0}$ we set
\begin{align*}
A_{\Lambda}(r) := \# \{l \in \Lambda^* \mid |l| = r \}.
\end{align*} 
Hereafter, we simply write $A$ instead of $A_{\Lambda}$ if it is clear which lattice is meant.
\end{deff}

\begin{bem}
In the case that the dual lattice $\Lambda^*$ is generated by an orthonormal basis, finding the value of $N(R) := \sum_{r \leq R} A(r)$ for $n=2$ is just the Gauss circle problem {\normalfont (\cite{hardy})}.
\end{bem}

From \eqref{eig1} and \eqref{eig2} we can directly read off the geometric multiplicities of the eigenvalues:

\begin{kor} \label{gv}
Let $\alpha, \beta >0$, $1 \leq p \leq n$ and $k \in \Lambda^*$. Then we have
\begin{align*}
\mathrm{dim} \big( \mathrm{Eig}(F_{\alpha \beta,p}^{T^n}, \lambda_{\alpha}^k) \big) = \binom{n-1}{p-1} A(|k|) + \binom{n-1}{p} A\left(\sqrt{\frac{\alpha}{\beta}}|k|\right)
\end{align*}
and 
\begin{align*}
\mathrm{dim} \big( \mathrm{Eig}(F_{\alpha \beta,p}^{T^n}, \lambda_{\beta}^k) \big) = \binom{n-1}{p} A(|k|) + \binom{n-1}{p-1} A\left(\sqrt{\frac{\beta}{\alpha}}|k|\right) .
\end{align*}
\end{kor}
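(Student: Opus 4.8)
The plan is to read the two formulae straight off the eigenspace decompositions \eqref{eig1} and \eqref{eig2} supplied by Theorem \ref{thmspt}; the only substantive content is to compute the dimensions of the building blocks $V_l^p$ and $W_l^p$ and to count how many of them occur.

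First I would fix $k \in \Lambda^*$ and recall from \eqref{eig1} that
\begin{align*}
\mathrm{Eig}(F_{\alpha \beta, p}^{T^n}, \lambda_{\alpha}^k) = \bigoplus_{\substack{l \in \Lambda^*: \\ |l|=|k|}} V_l^p \ \oplus \!\!\! \bigoplus_{\substack{l \in \Lambda^*: \\ |l| = \sqrt{\frac{\alpha}{\beta}}|k|}} \!\!\!\! W_l^p ,
\end{align*}
which is a genuine direct sum by the Remark following the definition of $V_l^p$ and $W_l^p$, and a \emph{finite} one because $A(r) < \infty$ for every $r \geq 0$. Hence $\dim \mathrm{Eig}(F_{\alpha \beta, p}^{T^n}, \lambda_{\alpha}^k)$ equals the number of $l \in \Lambda^*$ with $|l| = |k|$ times $\dim V_l^p$, plus the number of $l$ with $|l| = \sqrt{\alpha/\beta}\,|k|$ times $\dim W_l^p$.

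It remains to pin these quantities down. Since $\chi_l$ is nowhere vanishing, multiplication by $\chi_l$ is a linear isomorphism, so $\dim V_l^p = \dim\{ l \wedge \eta \mid \eta \in \Omega^{p-1}_{\mathrm{par}}(T^n, \mathbb{C})\}$ and $\dim W_l^p = \dim\{ \omega \in \Omega^p_{\mathrm{par}}(T^n, \mathbb{C}) \mid l^\sharp \lrcorner \omega = 0\}$; for $l \neq 0$ these were computed just before Theorem \ref{thmspt} to be $\binom{n-1}{p-1}$ and $\binom{n-1}{p}$ (with the convention $\binom{n-1}{n} = 0$), independently of $l$. By the definition of $A = A_\Lambda$ there are precisely $A(|k|)$ lattice vectors $l$ with $|l| = |k|$ and $A(\sqrt{\alpha/\beta}\,|k|)$ of them with $|l| = \sqrt{\alpha/\beta}\,|k|$, whence
\begin{align*}
\dim\big(\mathrm{Eig}(F_{\alpha \beta, p}^{T^n}, \lambda_{\alpha}^k)\big) = \binom{n-1}{p-1} A(|k|) + \binom{n-1}{p} A\left(\sqrt{\frac{\alpha}{\beta}}|k|\right),
\end{align*}
and the claim for $\lambda_{\beta}^k$ drops out of \eqref{eig2} in exactly the same way.

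I do not anticipate a real obstacle here: the statement is bookkeeping on top of Theorem \ref{thmspt}. The single point requiring a word of care is the degenerate case $k = 0$, where $|l| = |k| = 0$ forces $l = 0$, and then $\dim V_0^p = 0$ and $\dim W_0^p = \binom{n}{p}$ rather than $\binom{n-1}{p-1}$ and $\binom{n-1}{p}$; but since $A(0) = 1$ the asserted value is $\binom{n-1}{p-1} + \binom{n-1}{p}$, which by Pascal's rule equals $\binom{n}{p} = \dim \Omega^p_{\mathrm{par}}(T^n) = \dim \mathrm{Eig}(F_{\alpha \beta, p}^{T^n}, 0)$ by Remark \ref{bemnt1}(ii), so both formulae remain valid in that case as well.
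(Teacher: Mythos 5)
Your proposal is correct and is exactly the paper's argument: the corollary is stated there with no separate proof beyond the remark that the multiplicities can be read off directly from the decompositions \eqref{eig1} and \eqref{eig2}, using $\dim V_l^p = \binom{n-1}{p-1}$, $\dim W_l^p = \binom{n-1}{p}$ and the definition of $A$. Your explicit check of the degenerate case $k=0$ (where $\dim V_0^p = 0$ and $\dim W_0^p = \binom{n}{p}$, rescued by Pascal's rule) is a point the paper passes over silently, so if anything your write-up is slightly more careful.
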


\subsection{Isospectrality}

Subsequently, we investigate under which circumstances the operators $F_{\alpha \beta, p}$ for $\alpha, \beta > 0$ and $1 \leq p \leq n$ can have the same spectrum on different flat tori.

\subsubsection{Isometric flat tori}

\begin{bem}
Let $\Lambda_1$ and $\Lambda_2$ be two lattices in $\mathbb{R}^n$. Then $\mathbb{R}^n / \Lambda_1$ and $\mathbb{R}^n / \Lambda_2$ are isometric if and only if there is a linear isometry $I: \mathbb{R}^n \rightarrow \mathbb{R}^n$ with $I(\Lambda_1) = \Lambda_2$.
\end{bem}

If $M$ and $N$ are compact isometric Riemannian manifolds, the spectra of $F_{\alpha \beta, p}^M$ and $F_{\alpha \beta, p}^N$ coincide. In particular, two isometric flat tori have the same spectra with respect to $F_{\alpha \beta,p}$.
Now one faces the question whether the converse of this statement is true, i.e.\ whether the spectrum of $F_{\alpha \beta,p}^{T^n}$ already determines the flat tori $T^n$ up to isometry. 

\medskip
In the case $n=1$ the answer is found to be ``yes'', because the lattices then have the form $r \mathbb{Z}$ for $r \in \mathbb{R}\setminus \{0\}$ and the spectrum of the associated flat torus $\mathbb{R}/r\mathbb{Z}$, which is isomorphic to the one-dimensional sphere $S^1_{\frac{r}{2\pi}}$ with radius $\frac{r}{2\pi}$, is:
\begin{align*}
\mathrm{Spec}(F_{\alpha \beta, p}^{\mathbb{R} / r\mathbb{Z}}) = \mathop{\Cup}_{l \in \mathbb{Z}} \left\{4\pi^2\alpha \frac{l^2}{r^2}\right\}_{\binom{n-1}{p-1}} .
\end{align*}
So if $\mathbb{R} / r\mathbb{Z}$ and $\mathbb{R} / r' \mathbb{Z}$ for $r, r' \in \mathbb{R} \setminus \{0\}$ have the same spectrum, then $r = \pm r'$, i.e.\ both tori are identical, thus trivially isometric. 

\medskip
To give an answer to the above question for further dimensions $n$, first we show the following useful Lemma \ref{1}.

\begin{deff} Let $\mathcal{M} := \{ M \subseteq \mathbb{R} \mid M \text{ is a discrete weighted set bounded from below}\}$.
\end{deff}

\begin{lem} \label{1}
Let $\alpha, \beta > 0$, $l, m \in \mathbb{N}$ and $A, B \in \mathcal{M}$ with $\alpha A \ ^l \! \Cup^m \beta A = \alpha B \ ^l \! \Cup^m \beta B$. Then $A = B$.
\end{lem}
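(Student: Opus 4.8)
The plan is to reduce the assertion to an elementary fact about subsets of $\mathbb{R}$ invariant under a fixed dilation. First I would unwind the notation: using the definitions of $rW$, of $\Cup$, and of $^l\!\Cup^m$, the hypothesis $\alpha A \ ^l \! \Cup^m \beta A = \alpha B \ ^l \! \Cup^m \beta B$ means exactly that
\begin{align*}
l\,A\!\left(\frac{\lambda}{\alpha}\right) + m\,A\!\left(\frac{\lambda}{\beta}\right) \;=\; l\,B\!\left(\frac{\lambda}{\alpha}\right) + m\,B\!\left(\frac{\lambda}{\beta}\right) \qquad\text{for all } \lambda \in \mathbb{C}.
\end{align*}
Since $A$ and $B$ take values in $\mathbb{N}_0$, I may form the integer-valued difference $g := A - B$, which is supported inside $\mathrm{supp}(A) \cup \mathrm{supp}(B) \subseteq \mathbb{R}$, again a discrete set bounded from below. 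Substituting $\lambda = \alpha t$ and setting $q := \alpha/\beta > 0$, the displayed identity turns into the functional equation
\begin{align*}
l\,g(t) \;=\; -\,m\,g(q t) \qquad \text{for all } t \in \mathbb{R}.
\end{align*}

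Next I would analyse the support $S := \{t \in \mathbb{R} : g(t) \neq 0\}$. Because $l, m \geq 1$, the functional equation gives $g(t) = 0$ if and only if $g(qt) = 0$; hence $t \in S \iff qt \in S$, so $S = qS = q^k S$ for every $k \in \mathbb{Z}$. I claim $S = \emptyset$, which is the assertion $A = B$. Suppose instead $S \neq \emptyset$; as $S$ is discrete and bounded from below it has a minimum $x_0 := \min S$. If $x_0 > 0$, then $qx_0$ and $q^{-1}x_0$ belong to $S$ and are $\geq x_0 > 0$, forcing $q \geq 1$ and $q^{-1} \geq 1$, hence $q = 1$; if $x_0 < 0$, the same two memberships give $q \leq 1$ and $q^{-1} \leq 1$, again $q = 1$. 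But if $q = 1$ the functional equation reads $(l+m)\,g \equiv 0$, so $g \equiv 0$ and $S = \emptyset$, a contradiction. Finally, if $x_0 = 0$, evaluating the functional equation at $t = 0$ gives $(l+m)\,g(0) = 0$, hence $g(0) = 0$, contradicting $0 \in S$. In every case we reach a contradiction, so $S = \emptyset$ and $A = B$.

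All the computations here are one-liners, so I do not expect a substantial obstacle; the two points I would take care to state precisely are that $\mathrm{supp}(A) \cup \mathrm{supp}(B)$ is again discrete and bounded from below (an accumulation point of the union would be an accumulation point of one of the two sets), and that a nonempty discrete subset of $\mathbb{R}$ bounded from below attains its infimum. This last fact is exactly where the hypothesis $A, B \in \mathcal{M}$ is used and what makes the sign analysis of $\min S$ work: without boundedness from below a dilation-invariant support such as $\{2^k : k \in \mathbb{Z}\}$ would not contradict discreteness, and in fact the statement would then be false, so the hypothesis cannot be dropped.
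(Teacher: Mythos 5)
Your proof is correct, but it takes a genuinely different route from the paper. The paper shows that the map $C \mapsto \alpha C \ ^l \! \Cup^m \beta C$ is injective by an explicit greedy reconstruction: assuming $\alpha \le \beta$, it repeatedly reads off $\frac{1}{\alpha}\min(M_k)$, strips the corresponding $l+m$ entries, and recovers the preimage term by term. You instead pass to the integer-valued difference $g = A - B$ and observe that the functional equation $l\,g(t) = -m\,g(qt)$ forces the support of $g$ to be invariant under multiplication by $q = \alpha/\beta$, which is incompatible with being a nonempty discrete subset of $\mathbb{R}$ bounded from below. Your argument is shorter, symmetric in $\alpha$ and $\beta$ (no case split on $\alpha \le \beta$ versus $\alpha > \beta$), and -- unlike the paper's peeling, which tacitly uses that $\min\big(\alpha C \ ^l \! \Cup^m \beta C\big) = \alpha \min(C)$ for $\alpha \le \beta$ and hence that the elements are nonnegative -- your sign analysis of $\min S$ makes it work verbatim for weighted sets containing negative reals. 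The paper's version, in exchange, is constructive and serves as the template reused in the proof of Theorem \ref{4}. Both proofs rest on the same convention that ``discrete'' means having no accumulation points in $\mathbb{R}$, so that a nonempty discrete set bounded from below attains its minimum; your closing remark is slightly off on exactly this point, since $\{2^k \mid k \in \mathbb{Z}\}$ \emph{is} bounded from below -- what it violates is this strong form of discreteness (it accumulates at $0$), and that, rather than unboundedness from below, is what your example of failure actually exploits. This wobble is confined to the side remark and does not affect the proof itself.
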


\begin{proof}
First suppose $\alpha \leq \beta$. We consider the map 
\begin{align*}
f_{\alpha \beta}: \mathcal{M} \rightarrow \mathcal{M}: C \mapsto \alpha C \ ^l \! \Cup^m \beta C.
\end{align*} 
and show that it is injective, i.e.\ we show that for $M \in f_{\alpha \beta}(\mathcal{M}) \subseteq \mathcal{M}$ there exists an unique $C \in \mathcal{M}$ such that $M = \alpha C \ ^l \! \Cup^m \beta C$. This can be seen from the following reconstruction: \\
We set $M_0 := M$ and for $k \in \mathbb{N}_0$ iteratively $\lambda_k := \frac{1}{\alpha} \mathrm{min} (M_k)$ and $M_{k+1}:= M_k \setminus \{ (\alpha \lambda_k,l), (\beta \lambda_k,m)\}$. (The minima exist because the sets $M_k$ are discrete and bounded from below as subsets of $M$.) Then $M = \alpha C \ ^l \! \Cup^m \beta C$ with $C := \Cup_{i \in \mathbb{N}_0} \{ \lambda_i \}_1 $. By construction, $C$ is unique and $f_{\alpha \beta}$ therefore injective. \\
Thus, due to $f_{\alpha \beta}(A) = \alpha A \ ^l \! \Cup^m \beta A = \alpha B \ ^l \! \Cup^m \beta B = f_{\alpha \beta}(B)$, we have $A=B$. \\
The case $\alpha > \beta$ is shown analogously by setting $\lambda_k := \frac{1}{\beta} \mathrm{min}(M_k)$ in the above proof.
\end{proof}

\begin{bem}
Without the concept of the weighted set and union, an ordinary set $C$ can just be reconstructed from the set $\alpha C \cup \beta C$ with the method from the proof if $\alpha C \cap \beta C = \emptyset$.
\end{bem}

\begin{thm} \label{2}
Let $\Lambda_1$ and $\Lambda_2$ be two lattices in $\mathbb{R}^n$, $\alpha, \beta > 0$ and $1 \leq p \leq n$. Then
$F_{\alpha \beta, p}^{\mathbb{R}^n / \Lambda_1}$ and $F_{\alpha \beta, p}^{\mathbb{R}^n / \Lambda_2}$ are isospectral if and only if $\Delta_0^{\mathbb{R}^n / \Lambda_1}$ and $\Delta_0^{\mathbb{R}^n / \Lambda_2}$ are isospectral.
\end{thm}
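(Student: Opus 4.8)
The plan is to reduce the statement entirely to two facts already at hand: the identity
\begin{align*}
\mathrm{Spec}(F_{\alpha\beta,p}^{T^n}) = \alpha\cdot\mathrm{Spec}(\Delta_0^{T^n})\ \! ^{\binom{n-1}{p-1}}\Cup\ \! \! ^{\binom{n-1}{p}}\beta\cdot\mathrm{Spec}(\Delta_0^{T^n})
\end{align*}
from Remark \ref{bemnt1}, which expresses the spectrum of $F_{\alpha\beta,p}^{T^n}$ through the spectrum of $\Delta_0^{T^n}$ by a rule that does not depend on the torus, and the reconstruction Lemma \ref{1}. The "if" direction is then immediate: if $\Delta_0^{\mathbb{R}^n/\Lambda_1}$ and $\Delta_0^{\mathbb{R}^n/\Lambda_2}$ have the same spectrum, substituting this common spectrum into the displayed formula yields $\mathrm{Spec}(F_{\alpha\beta,p}^{\mathbb{R}^n/\Lambda_1}) = \mathrm{Spec}(F_{\alpha\beta,p}^{\mathbb{R}^n/\Lambda_2})$.

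For the converse I would set $A := \mathrm{Spec}(\Delta_0^{\mathbb{R}^n/\Lambda_1})$ and $B := \mathrm{Spec}(\Delta_0^{\mathbb{R}^n/\Lambda_2})$. Since the Laplacian on a closed manifold has discrete spectrum bounded below by $0$, both $A$ and $B$ lie in $\mathcal{M}$. Writing $l := \binom{n-1}{p-1}$ and $m := \binom{n-1}{p}$, the assumed isospectrality of $F_{\alpha\beta,p}^{\mathbb{R}^n/\Lambda_1}$ and $F_{\alpha\beta,p}^{\mathbb{R}^n/\Lambda_2}$ becomes, via the formula above, the identity $\alpha A\ \! ^l\Cup^m\beta A = \alpha B\ \! ^l\Cup^m\beta B$. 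Because $1 \leq p \leq n$ forces $0 \leq p-1 \leq n-1$, the exponent $l = \binom{n-1}{p-1}$ is a positive integer; if moreover $p \leq n-1$, then $m = \binom{n-1}{p}$ is positive as well, so Lemma \ref{1} applies directly and gives $A = B$, i.e.\ $\Delta_0^{\mathbb{R}^n/\Lambda_1}$ and $\Delta_0^{\mathbb{R}^n/\Lambda_2}$ are isospectral.

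The only case in which Lemma \ref{1} does not apply verbatim is $p = n$, where $m = \binom{n-1}{n} = 0$ by our convention; here the formula from Remark \ref{bemnt1} degenerates to $\mathrm{Spec}(F_{\alpha\beta,n}^{T^n}) = \alpha\cdot\mathrm{Spec}(\Delta_0^{T^n})$, so the hypothesis reads $\alpha A = \alpha B$, and since scaling a weighted set by $1/\alpha$ is a bijection of $\mathcal{M}$ we again conclude $A = B$. I do not anticipate a real obstacle: the argument is a substitution followed by an invocation of Lemma \ref{1}, and the only point requiring attention is to verify that the binomial coefficients entering that lemma are genuinely positive, which is exactly what isolates the degenerate top-degree case $p = n$.
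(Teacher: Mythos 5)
Your proof is correct and follows essentially the same route as the paper: substitute the identity $\mathrm{Spec}(F_{\alpha\beta,p}^{T^n}) = \alpha\,\mathrm{Spec}(\Delta_0^{T^n})\ ^{\binom{n-1}{p-1}}\!\Cup^{\binom{n-1}{p}}\,\beta\,\mathrm{Spec}(\Delta_0^{T^n})$ from Remark \ref{bemnt1} and invoke Lemma \ref{1}. Your separate treatment of the degenerate case $p=n$, where $\binom{n-1}{p}=0$ so Lemma \ref{1} (which requires $l,m\in\mathbb{N}$) does not apply verbatim, is a point the paper's proof passes over silently, and handling it by observing that $\alpha A = \alpha B$ forces $A=B$ is exactly right.
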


\begin{proof}
``$\Rightarrow$'': For $i=1,2$ we have seen in \eqref{Specl0}, that on functions, 
\begin{align*}
\mathrm{Spec}(\Delta_0^{\mathbb{R}^n / \Lambda_i}) = \mathop{\Cup}_{l \in \Lambda_i^*} \{ \lambda_1^l \}_1  .
\end{align*}
The eigenvalues are non-negative and thus the spectra are weighted subsets of $\mathbb{R}$ which are bounded from below and discrete due to $\Lambda_i^* \cong \mathbb{Z}^n$. Furthermore, Remark \ref{bemnt1} and the assumption imply
\begin{align*}
\alpha \mathrm{Spec}(\Delta_0^{\mathbb{R}^n / \Lambda_1}) \ ^{\binom{n-1}{p-1}} \! \Cup \ \!\!\! ^{\binom{n-1}{p}} \beta \mathrm{Spec}(\Delta_0^{\mathbb{R}^n / \Lambda_1}) &= \mathrm{Spec}(F_{\alpha \beta, p}^{\mathbb{R}^n / \Lambda_1}) = \mathrm{Spec}(F_{\alpha \beta, p}^{\mathbb{R}^n / \Lambda_2}) \\
&= \alpha \mathrm{Spec}(\Delta_0^{\mathbb{R}^n / \Lambda_2}) \ ^{\binom{n-1}{p-1}} \! \Cup \ \!\!\!^{\binom{n-1}{p}} \beta \mathrm{Spec}(\Delta_0^{\mathbb{R}^n / \Lambda_2}) .
\end{align*}
Lemma \ref{1} therefore yields
$\mathrm{Spec}(\Delta_0^{\mathbb{R}^n / \Lambda_1}) = \mathrm{Spec}(\Delta_0^{\mathbb{R}^n / \Lambda_2})$. \\
``$\Leftarrow$'': If $\Delta_0^{\mathbb{R}^n / \Lambda_1}$ and $\Delta_0^{\mathbb{R}^n / \Lambda_2}$ are isospectral, it follows immediately that
\begin{align*}
\mathrm{Spec}(F_{\alpha \beta, p}^{\mathbb{R}^n / \Lambda_1}) &= \alpha \mathrm{Spec}(\Delta_0^{\mathbb{R}^n / \Lambda_1}) \ ^{\binom{n-1}{p-1}} \! \Cup \ \!\!\!^{\binom{n-1}{p}} \beta \mathrm{Spec}(\Delta_0^{\mathbb{R}^n / \Lambda_1}) \\
&= \alpha \mathrm{Spec}(\Delta_0^{\mathbb{R}^n / \Lambda_2}) \ ^{\binom{n-1}{p-1}} \! \Cup \ \!\!\!^{\binom{n-1}{p}} \beta \mathrm{Spec}(\Delta_0^{\mathbb{R}^n / \Lambda_2}) = \mathrm{Spec}(F_{\alpha \beta, p}^{\mathbb{R}^n / \Lambda_2}) .\qedhere
\end{align*}
\end{proof}

Using Theorem \ref{2}, we can now give an answer to the above asked question for the dimensions $n=2$ and $n \geq 4$, because it is already known that the following results are true for Laplace-Beltrami operators on functions.

\begin{satz} \label{PropOpIsospToriIsom}
Let $\Lambda_1$ and $\Lambda_2$ be two lattices in $\mathbb{R}^n$, $n=2, 3$, $\alpha, \beta > 0$ and $1 \leq p \leq n$. Then
if $F_{\alpha \beta, p}^{\mathbb{R}^2/\Lambda_1}$ and $F_{\alpha \beta, p}^{\mathbb{R}^2/\Lambda_2}$ are isospectral, the flat tori $\mathbb{R}^2/\Lambda_1$ and $\mathbb{R}^2/\Lambda_2$ are isometric.
\end{satz}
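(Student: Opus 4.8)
The plan is to reduce the assertion to the corresponding, classically known, rigidity statement for the scalar Laplacian and then to invoke reduction theory for two-dimensional lattices. First I would apply Theorem~\ref{2}: the operators $F_{\alpha \beta, p}^{\mathbb{R}^2/\Lambda_1}$ and $F_{\alpha \beta, p}^{\mathbb{R}^2/\Lambda_2}$ are isospectral if and only if $\Delta_0^{\mathbb{R}^2/\Lambda_1}$ and $\Delta_0^{\mathbb{R}^2/\Lambda_2}$ are isospectral, so it suffices to treat the Laplacian on functions. By \eqref{Specl0} the spectrum of $\Delta_0^{\mathbb{R}^2/\Lambda_i}$ is the weighted set $\mathop{\Cup}_{l \in \Lambda_i^*}\{4\pi^2|l|^2\}_1$, hence carries exactly the information of the counting function $A_{\Lambda_i}$, i.e.\ of the number of vectors of each length $r \geq 0$ in the dual lattice $\Lambda_i^*$. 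Thus the hypothesis becomes $A_{\Lambda_1} = A_{\Lambda_2}$.

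The core of the argument is the classical fact that a rank-two lattice in $\mathbb{R}^2$ is determined up to a linear isometry by its length-counting function. I would establish this through reduction theory: choose a shortest nonzero vector $v_1$ of $\Lambda_1^*$ and complete it to a Minkowski-reduced basis $\{v_1, v_2\}$, so that $|v_1| \leq |v_2|$ and $0 \leq 2\langle v_1, v_2\rangle \leq |v_1|^2$; then the isometry class of $\Lambda_1^*$ is pinned down by the triple $\bigl(|v_1|^2, |v_2|^2, \langle v_1, v_2\rangle\bigr)$, these being the entries of the Gram matrix of a reduced basis. The remaining task is to read this triple off from $A_{\Lambda_1}$: the number $|v_1|^2$ is the smallest length squared that occurs, and $|v_2|^2$ and $\langle v_1, v_2\rangle$ are obtained by inspecting, in increasing order, the lengths realized by lattice vectors not proportional to the ones already found, together with their multiplicities. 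Carrying out the same for $\Lambda_2^*$ and using $A_{\Lambda_1} = A_{\Lambda_2}$ yields the same reduced triple, hence an orthogonal map $Q \in O(2)$ with $Q\Lambda_1^* = \Lambda_2^*$. Since $(Q\Lambda)^* = Q\Lambda^*$ for $Q \in O(2)$ and $\Lambda^{**} = \Lambda$, dualizing gives $Q\Lambda_1 = \Lambda_2$, and by the characterization of isometric flat tori recalled just before Theorem~\ref{2} the tori $\mathbb{R}^2/\Lambda_1$ and $\mathbb{R}^2/\Lambda_2$ are isometric.

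I expect the only delicate point to be the last recovery step: when $\Lambda_1^*$ has more than two shortest vectors, i.e.\ (up to scaling) for the square and the hexagonal lattice, the naive procedure stalls and one has to exploit the extra symmetry to determine $|v_2|^2$ and $\langle v_1, v_2\rangle$, keeping track of the several boundary cases of reduction theory. For the paper it is probably cleanest to bypass this and simply cite the well-known spectral rigidity of two-dimensional flat tori for $\Delta_0$ --- equivalently, the classical result that a positive definite binary quadratic form is determined by its representation numbers --- which is precisely the fact alluded to in the introduction; then only the reduction in the first paragraph is required.
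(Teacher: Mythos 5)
Your proposal is correct and follows essentially the same route as the paper: reduce to the scalar Laplacian via Theorem~\ref{2} and then invoke the classical spectral rigidity of two-dimensional flat tori. The paper simply cites \cite[Proposition B.II.5]{bgm} for that second step, which is exactly the shortcut you recommend at the end in place of carrying out the reduction-theory argument (whose boundary cases you rightly flag as the only delicate point).
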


\begin{proof}
Because $F_{\alpha \beta, p}^{\mathbb{R}^2/\Lambda_1}$ and $F_{\alpha \beta, p}^{\mathbb{R}^2/\Lambda_2}$ are isospectral, by Theorem \ref{2} this is also true for $\Delta_0^{\mathbb{R}^2 / \Lambda_1}$ and $\Delta_0^{\mathbb{R}^2 / \Lambda_2}$. In dimension $n=2$, the claim then follows from \cite[Proposition B.II.5]{bgm}; in dimension $n=3$, the claim follows from the work of Schiemann \cite{Schiemann}.
\end{proof}

\begin{satz} \label{SatzIsoNonIso}
Let $\alpha, \beta > 0$, $1 \leq p \leq n$ and $n \geq 4$. Then there exist two lattices $\Lambda_1$ and $\Lambda_2$ in $\mathbb{R}^n$, such that $F_{\alpha \beta, p}^{\mathbb{R}^n/\Lambda_1}$ and $F_{\alpha \beta, p}^{\mathbb{R}^n/\Lambda_2}$ are isospectral, whereas the flat tori $\mathbb{R}^n/\Lambda_1$ and $\mathbb{R}^n/\Lambda_2$ are not isometric.
\end{satz}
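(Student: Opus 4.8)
The plan is to reduce the assertion to the classical existence of non-isometric, $\Delta_0$-isospectral flat tori in dimension four, and then to propagate such a pair to all higher dimensions by forming orthogonal products with a cubic lattice. The first step is Theorem \ref{2}: for lattices $\Lambda_1,\Lambda_2\subseteq\mathbb{R}^n$ the operators $F_{\alpha\beta,p}^{\mathbb{R}^n/\Lambda_1}$ and $F_{\alpha\beta,p}^{\mathbb{R}^n/\Lambda_2}$ are isospectral if and only if $\Delta_0^{\mathbb{R}^n/\Lambda_1}$ and $\Delta_0^{\mathbb{R}^n/\Lambda_2}$ are, and by \eqref{Specl0} the latter holds exactly when $A_{\Lambda_1}(r)=A_{\Lambda_2}(r)$ for all $r\ge 0$, i.e.\ when the dual lattices $\Lambda_1^*$ and $\Lambda_2^*$ have the same theta series. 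Since $\mathbb{R}^n/\Lambda_1$ and $\mathbb{R}^n/\Lambda_2$ are isometric precisely when some linear isometry of $\mathbb{R}^n$ carries $\Lambda_1$ onto $\Lambda_2$, it suffices to exhibit, for every $n\ge 4$, two lattices in $\mathbb{R}^n$ with identical theta series that are not related by a linear isometry.

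For $n=4$ this is a known phenomenon: there exist non-isometric four-dimensional lattices with the same theta series, in contrast to dimensions $\le 3$, where the theta series determines the lattice up to isometry. I would simply quote such a pair $L_1,L_2\subseteq\mathbb{R}^4$ from the literature (in high dimensions the effect goes back to Milnor \cite{jm}, and it was brought down to dimension four by Conway and Sloane), and pass to the dual lattices $\Lambda_i:=L_i^*$; by the reduction above this already settles the case $n=4$.

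For $n>4$ I would fix a pair $\Lambda_1,\Lambda_2\subseteq\mathbb{R}^4$ as above, choose $c>0$ large enough that every nonzero vector of $c\Lambda_1$ and of $c\Lambda_2$ has length strictly greater than $1$, and set $\Lambda_i':=c\Lambda_i\oplus\mathbb{Z}^{n-4}\subseteq\mathbb{R}^4\oplus\mathbb{R}^{n-4}=\mathbb{R}^n$. Theta series are multiplicative under orthogonal direct sums and transform in a controlled way under dilations, so $\Lambda_1'$ and $\Lambda_2'$ again have equal theta series, whence $F_{\alpha\beta,p}^{\mathbb{R}^n/\Lambda_1'}$ and $F_{\alpha\beta,p}^{\mathbb{R}^n/\Lambda_2'}$ are isospectral by the reduction above. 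For non-isometry, observe that by the choice of $c$ the vectors of length $1$ in $\Lambda_i'$ are exactly the $2(n-4)$ vectors $\pm e_j$ lying in the cubic summand; these span $\{0\}\oplus\mathbb{R}^{n-4}$, so any linear isometry $I$ of $\mathbb{R}^n$ with $I(\Lambda_1')=\Lambda_2'$ must preserve $\{0\}\oplus\mathbb{R}^{n-4}$ and hence its orthogonal complement $\mathbb{R}^4\oplus\{0\}$, and therefore restricts to a linear isometry of $\mathbb{R}^4$ taking $c\Lambda_1$ onto $c\Lambda_2$, i.e.\ $\Lambda_1$ onto $\Lambda_2$ --- contradicting the choice of the four-dimensional pair.

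I expect the only genuinely non-elementary input to be the four-dimensional isospectral pair quoted above; the rest is formal. Within that formal part, the step requiring a little care is keeping the tori non-isometric after the orthogonal product, which is exactly why one first rescales the four-dimensional lattices so that the cubic summand is singled out by the unit-length vectors.
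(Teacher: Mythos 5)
Your argument is correct and follows the same overall strategy as the paper: reduce the statement to isospectrality of $\Delta_0$ via Theorem \ref{2}, and quote Conway--Sloane \cite{conwaysloane} for a non-isometric $\Delta_0$-isospectral pair in dimension four. The one place you diverge is the passage from $n=4$ to arbitrary $n\geq 4$: the paper simply cites \cite[chapter III, Proposition B.III.1]{bgm} for the existence of such pairs in every higher dimension, whereas you prove this step yourself by rescaling the four-dimensional pair so that all nonzero vectors have length greater than $1$ and then taking the orthogonal sum with $\mathbb{Z}^{n-4}$. Your construction is sound: multiplicativity of theta series under orthogonal sums gives isospectrality (note that what Theorem \ref{2} and \eqref{Specl0} actually require is equality of the counting functions $A_{\Lambda}$ on the \emph{dual} lattices, but since the dual of $c\Lambda_i\oplus\mathbb{Z}^{n-4}$ is $\tfrac{1}{c}\Lambda_i^*\oplus\mathbb{Z}^{n-4}$, the same product argument applies there, so the distinction is harmless), and your observation that the unit vectors single out the cubic summand correctly forces any putative isometry to restrict to an isometry of the four-dimensional factors, yielding the contradiction. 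The trade-off is the usual one: the paper's citation is shorter, while your version is self-contained and makes visible exactly why the dimension-raising step preserves both isospectrality and non-isometry.
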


\begin{proof}
In the paper of J. H. Conway and N. J. A. Sloane (\cite{conwaysloane}), it is shown for the dimension $n=4$ that there are lattices $\Lambda_1$ and $\Lambda_2$ in $\mathbb{R}^n$ such that $\Delta_0^{\mathbb{R}^n/\Lambda_1}$ and $\Delta_0^{\mathbb{R}^n/\Lambda_2}$ are isospectral, but $\mathbb{R}^n/\Lambda_1$ and $\mathbb{R}^n/\Lambda_2$ are not isometric. Due to \cite[chapter III, Proposition B.III.1]{bgm} therefore there are such in every dimension $n \geq 4$. The result follows with Theorem \ref{2}.
\end{proof}

For the dimension $n=3$ this question still seems to be open.

\subsubsection{Variation of parameters}

For flat tori $T_1$ and $T_2$, $\alpha, \beta > 0$ and $1 \leq p \leq n$ we have shown that if $F_{\alpha \beta, p}^{T_1}$ and $F_{\alpha \beta, p}^{T_2}$ are isospectral, we can conclude that $T_1$ and $T_2$ are isometric in dimensions $1$ and $2$, while from dimension $4$ onward, there are cases in which they are not isometric. Now we consider the case that $F_{\alpha \beta, p}^{T_1}$ and $F_{\alpha' \beta', p}^{T_2}$ are isospectral for $( \alpha, \beta) \neq (\alpha', \beta')$ and $n \neq 2p$, respectively $\{\alpha,\beta\} \neq \{\alpha', \beta'\}$ and $n=2p$ (see Remark \ref{2d}), where $\alpha', \beta' > 0$ as well.

\begin{bem} \label{2d}
In Proposition \ref{symn=2} we already recognized that for $n$-dimensional flat tori $T^n$ with $n = 2p$ the spectrum of $F_{\alpha \beta, p}^{T^n}$ is symmetric in $\alpha$ and $\beta$:
\begin{align*}
\mathrm{Spec}(F_{\alpha \beta, p}^{T^n}) = \alpha \mathrm{Spec}(\Delta_0^{T^n}) \ ^{\binom{2p-1}{p}} \! \Cup \ \!\!\!^{\binom{2p-1}{p}} \beta \mathrm{Spec}(\Delta_0^{T^n}) = \mathrm{Spec}(F_{\beta \alpha, p}^{T^n}) .
\end{align*}
\end{bem}

\begin{thm} \label{4}
Let $T^n$ be a flat torus, $\alpha, \alpha', \beta, \beta' > 0$, $1 \leq p \leq n$ and $n \neq 2p$. Then $F_{\alpha \beta,p}^{T^n}$ and $F_{\alpha' \beta',p}^{T^n}$ are isospectral if and only if $( \alpha, \beta) = ( \alpha', \beta')$. \\
For $n=2p$ the statement holds with $\{\alpha, \beta\} = \{\alpha', \beta'\}$ instead of $( \alpha, \beta) = ( \alpha', \beta')$.
\end{thm}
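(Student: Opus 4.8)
The plan is to settle the ``if'' direction by inspection and the ``only if'' direction from the explicit spectrum computed in Theorem \ref{thmspt}. If $(\alpha,\beta)=(\alpha',\beta')$ the two operators are literally equal, hence trivially isospectral; and if $n=2p$ then by Proposition \ref{symn=2} (see Remark \ref{2d}) the spectrum of $F_{\alpha\beta,p}^{T^n}$ is invariant under $\alpha\leftrightarrow\beta$, so $\{\alpha,\beta\}=\{\alpha',\beta'\}$ already suffices. The substance is the converse. Throughout I would write $S:=\mathrm{Spec}(\Delta_0^{T^n})$, $a:=\binom{n-1}{p-1}$ and $b:=\binom{n-1}{p}$, so that by Remark \ref{bemnt1} we have $\mathrm{Spec}(F_{\alpha\beta,p}^{T^n})=\alpha S \ ^a \! \Cup^b \beta S$, that is, $\mathrm{Spec}(F_{\alpha\beta,p}^{T^n})(\lambda)=a\,S(\lambda/\alpha)+b\,S(\lambda/\beta)$. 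The combinatorial fact driving the proof is that $a=b$ holds exactly when $n=2p$; for $n\neq 2p$ we have $a\neq b$, and this asymmetry is precisely what lets one recover the \emph{ordered} pair $(\alpha,\beta)$ and not merely the unordered pair $\{\alpha,\beta\}$.

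Now assume $\alpha S \ ^a \! \Cup^b \beta S = \alpha' S \ ^a \! \Cup^b \beta' S$. By \eqref{Specl0}, $\mathrm{supp}(S)=\{4\pi^2|l|^2\mid l\in\Lambda^*\}$, with $0\in\mathrm{supp}(S)$ and $S(0)=1$; let $\mu>0$ be the smallest positive element of $\mathrm{supp}(S)$, attained at a shortest nonzero dual vector $l_0$, and put $m:=S(\mu)=A_\Lambda(|l_0|)\geq 1$. The smallest positive element of $\alpha S \ ^a \! \Cup^b \beta S$ is $\min(\alpha,\beta)\mu$, so isospectrality forces $\min(\alpha,\beta)=\min(\alpha',\beta')$. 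To read off which of $\alpha,\beta$ is the smaller, I would compute the multiplicity of this smallest positive eigenvalue: since no element of $\mathrm{supp}(S)$ lies strictly between $0$ and $\mu$, the contributions of $\alpha S$ and $\beta S$ to the value $\min(\alpha,\beta)\mu$ do not overlap, so that multiplicity equals $am$ if $\alpha<\beta$, equals $bm$ if $\alpha>\beta$, and equals $(a+b)m$ if $\alpha=\beta$. Since $a\neq b$ (and $a,b\geq1$), these three values are pairwise distinct, so the spectrum pins down which of the three cases occurs, and the same case must occur for $(\alpha',\beta')$. In the case $\alpha<\beta$ this already gives $\alpha=\alpha'$; I would then subtract $a\cdot\alpha S$ from both spectra --- a difference with no truncation, because $\mathrm{Spec}(F_{\alpha\beta,p}^{T^n})(\lambda)=aS(\lambda/\alpha)+bS(\lambda/\beta)\geq aS(\lambda/\alpha)$ --- to obtain $b\cdot\beta S=b\cdot\beta' S$, hence $\beta S=\beta' S$; comparing smallest positive elements once more gives $\beta=\beta'$, so $(\alpha,\beta)=(\alpha',\beta')$. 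The cases $\alpha>\beta$ and $\alpha=\beta$ are symmetric (in the last one $(\alpha',\beta')$ is of the same type, so $\alpha=\beta$ and $\alpha'=\beta'$, and then $\mathrm{Spec}(F_{\alpha\beta,p}^{T^n})=(a+b)\,\alpha S=(a+b)\,\alpha' S$ forces $\alpha=\alpha'$).

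For $n=2p$ one has $a=b$, so $\mathrm{Spec}(F_{\alpha\beta,p}^{T^n})=a\,(\alpha S\Cup\beta S)$ and isospectrality becomes $\alpha S\Cup\beta S=\alpha' S\Cup\beta' S$; by Remark \ref{2d} one may now assume $\alpha\leq\beta$ and $\alpha'\leq\beta'$, whereupon the smallest positive eigenvalue gives $\alpha=\alpha'$, subtracting $\alpha S$ from both sides gives $\beta S=\beta' S$, hence $\beta=\beta'$, and so $\{\alpha,\beta\}=\{\alpha',\beta'\}$. The step I expect to be the main obstacle is the order-determination via the multiplicity at $\min(\alpha,\beta)\mu$: one must check carefully that the contributions of $\alpha S$ and $\beta S$ at that value really do not interfere (they do not, since $(\alpha/\beta)\mu\notin\mathrm{supp}(S)$ when $\alpha<\beta$), keep the degenerate possibility $\alpha=\beta$ on a separate footing, and treat the extreme degree $p=n$ (where $b=0$, so $F_{\alpha\beta,n}^{T^n}=\alpha\,d\delta$ depends on $\alpha$ only) on its own; the remaining manipulations are routine bookkeeping with weighted unions, together with the elementary fact that $cS=c'S$ for $c,c'>0$ forces $c=c'$.
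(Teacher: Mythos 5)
Your argument is correct and is essentially the paper's own proof: both reconstruct $(\alpha,\beta)$ from the weighted set $\alpha S\ ^a\!\Cup^b\,\beta S$ by reading off the smallest positive eigenvalue and its multiplicity (which is $am$, $bm$ or $(a+b)m$ according as $\alpha<\beta$, $\alpha>\beta$ or $\alpha=\beta$), using $a=\binom{n-1}{p-1}\neq\binom{n-1}{p}=b$ for $n\neq 2p$ to decide the order, then subtracting the recovered copy and repeating, with the unordered ``assume $\alpha\le\beta$'' variant when $n=2p$. Your caveat about $p=n$ is in fact sharper than the paper: there $b=0$, so $F_{\alpha\beta,n}^{T^n}=\alpha\, d\delta$ is independent of $\beta$ and the ``only if'' direction as stated genuinely fails (the paper's reconstruction silently assumes $b\ge 1$), so this is a defect of the theorem's hypotheses rather than of your proof.
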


\begin{proof}
``$\Rightarrow$'': Let $\Lambda$ be a lattice in $\mathbb{R}^n$ with $T^n := \mathbb{R}^n / \Lambda$ and $A:= \mathrm{Spec}(\Delta_0^{T^n})$. \\
Let first $n \neq 2p$. We consider the map 
\begin{align*}
f_A: (0, \infty) \times (0, \infty) &\rightarrow \mathcal{M} \\
(\gamma, \delta) &\mapsto \gamma A \ ^{\binom{n-1}{p-1}} \! \Cup \ \!\!\!^{\binom{n-1}{p}} \delta A  = \mathrm{Spec}(F_{\gamma \delta,p}^{T^n})
\end{align*} 
and show that it is injective. For $M$ in the image of $f_A$, we obtain a unique preimage under $f_A$ in the following way:  \\
Let $\widetilde{M} := M \setminus \{ (0,\binom{n}{p}) \}$. The minimum $\widetilde{m} := \min(\widetilde{M})$ is positive because $0$ has the multiplicity $\binom{n}{p}$. Now we choose $k \in \Lambda^*$ in such a way that 
\begin{align*}
|k| = \min_{l \in \Lambda^*\setminus \{0\}} |l| . 
\end{align*}
Then $\lambda_1^k$ is the smallest positive element of $A$, $A(|k|) \neq 0$ and we have $A(r) = 0$ for all $0 < r < |k|$. Hence due to Corollary \ref{gv}, the multiplicity $\mathrm{mult}(\widetilde{m})$ of $\widetilde{m}$ is either $\binom{n-1}{p-1} A(|k|)$, $\binom{n-1}{p}A(|k|)$ or $\binom{n}{p}A(|k|)$. 
\begin{enumerate}
\item[$\bullet$] In the case that $\mathrm{mult}(\widetilde{m}) = \binom{n-1}{p-1} A(|k|)$, we set $\gamma := \frac{\widetilde{m}}{\lambda_1^k}$. Let now $M' := M \setminus \Cup_{i=1}^{\binom{n-1}{p-1}} \gamma A$ and $\widetilde{M}' := M' \setminus \{ (0, \binom{n-1}{p}) \}$. Then $\widetilde{m}' := \min{\widetilde{M}'} > 0$ and we put $\delta := \frac{\widetilde{m}'}{\lambda_1^k}$. 
\item[$\bullet$] If $\mathrm{mult}(\widetilde{m}) = \binom{n-1}{p}A(|k|)$, we set $\delta := \frac{\widetilde{m}}{\lambda_1^k}$.  Let now $M' := M \setminus \Cup_{i=1}^{\binom{n-1}{p}} \delta A$ and $\widetilde{M}' := M' \setminus \{(0,\binom{n-1}{p-1})\}$. Then $\widetilde{m}' := \min(\widetilde{M}') > 0$ and we define $\gamma := \frac{\widetilde{m}'}{\lambda_1^k}$. 
\item[$\bullet$] In the last case that $\mathrm{mult}(\widetilde{m}) = \binom{n}{p} A(|k|)$, we put $\gamma := \delta := \frac{\widetilde{m}}{\lambda_1^k}$. 
\end{enumerate}
In each case, $M = \gamma A \ ^{\binom{n-1}{p-1}} \! \Cup^{\binom{n-1}{p}} \delta A$. Here $\gamma$ and $\delta$ are unique by construction and therefore $f_A$ is injective. \\
By assumption, $f_A(\alpha, \beta) = \mathrm{Spec}(F_{\alpha \beta,p}^{T^n}) = \mathrm{Spec}(F_{\alpha' \beta',p}^{T^n}) = f_A(\alpha', \beta')$, thus $(\alpha, \beta) = (\alpha', \beta')$.

\medskip
Now let $n=2p$. We show that the map
\begin{align*}
\widetilde{f}_A: \big\{ C \subset (0,\infty) \mid \# C \in \{1,2\} \big\} &\rightarrow \mathcal{M}  \\
\{\gamma, \delta\} &\mapsto \gamma A \ ^{\binom{2p-1}{p}} \! \Cup \ \!\!\!^{\binom{2p-1}{p}} \delta A = \mathrm{Spec}(F_{\gamma \delta,p}^{T^{2p}}) = \mathrm{Spec}(F_{\delta \gamma,p}^{T^{2p}})
\end{align*}
is injective. To this end, let $M$ be in the image of $f_A$, $\widetilde{m} > 0$ and $k \in \Lambda^*$ as above. We set $\gamma := \frac{\widetilde{m}}{\lambda_1^k}$. Now let $M' := M \setminus \Cup_{i=1}^{\binom{2p-1}{p}} \gamma A$ and $\widetilde{M}' := M' \setminus \{ (0,\binom{2p-1}{p}) \}$. We have $\widetilde{m}' := \min(\widetilde{M}') > 0$ and we define $\delta := \frac{\widetilde{m}'}{\lambda_1^k}$. Then $M = \gamma A \ ^{\binom{2p-1}{p}} \! \Cup \ \!\!\!^{\binom{2p-1}{p}} \delta A$, where $\{ \gamma, \delta \}$ is unique by construction. Thus $\widetilde{f}_A$ is injective. With $\widetilde{f}_A(\{\alpha, \beta\}) = \mathrm{Spec}(F_{\alpha \beta,p}^{T^{2p}}) = \mathrm{Spec}(F_{\alpha' \beta',p}^{T^{2p}}) = \widetilde{f}_A(\{\alpha', \beta'\})$ we obtain that $\{\alpha, \beta\} = \{\alpha', \beta'\}$.

\medskip
``$\Leftarrow$'': The converse implications are trivial.
\end{proof}

The next corollary shows that there are $\alpha, \alpha', \beta, \beta' > 0$ and lattices $\Lambda, \Lambda'$ in $\mathbb{R}^n$ such that $F_{\alpha \beta, p}^{\mathbb{R}^n/\Lambda}$ and $F_{\alpha' \beta', p}^{\mathbb{R}^n/\Lambda'}$ are isospectral. 

\begin{kor}
Let $\Lambda$ be a lattice in $\mathbb{R}^n$, $\alpha, \alpha', \beta, \beta' > 0$ and $c \in \mathbb{R} \setminus \{0\}$. Then for $n \neq 2p$: \\ 
$F_{\alpha \beta, p}^{\mathbb{R}^n/\Lambda}$ and $F_{\alpha' \beta', p}^{\mathbb{R}^n/c\Lambda}$ are isospectral if and only if  $(\alpha', \beta') = (c^2\alpha, c^2 \beta)$. \\
For $n=2p$ the statement holds with $\{\alpha', \beta'\} = \{c^2\alpha, c^2 \beta \}$ instead of $(\alpha', \beta') = (c^2\alpha, c^2 \beta)$.
\end{kor}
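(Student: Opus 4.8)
The plan is to reduce the claim to Theorem~\ref{4} by a scaling argument. Set $A := \mathrm{Spec}(\Delta_0^{\mathbb{R}^n/\Lambda})$, and $\gamma := \alpha'/c^2$, $\delta := \beta'/c^2$, which are positive since $c^2 > 0$. The first step is to determine the Laplace spectrum of the rescaled torus. If $\Lambda = \Lambda_B$ for a basis $B = \{b_1,\dots,b_n\}$, then $c\Lambda = \Lambda_{cB}$, and the dual basis of $\{cb_1,\dots,cb_n\}$ is $\{c^{-1}b^1,\dots,c^{-1}b^n\}$, so $(c\Lambda)^* = c^{-1}\Lambda^*$. Consequently, as $l'$ runs through $(c\Lambda)^*$ the norm $|l'|$ runs through $\{\,|c|^{-1}|l| : l \in \Lambda^*\,\}$ with matching multiplicities, so by \eqref{Specl0} the eigenvalues of $\Delta_0^{\mathbb{R}^n/c\Lambda}$ are $4\pi^2|c^{-1}l|^2 = c^{-2}\cdot 4\pi^2|l|^2$ for $l \in \Lambda^*$; in the weighted-set notation this reads $\mathrm{Spec}(\Delta_0^{\mathbb{R}^n/c\Lambda}) = c^{-2}A$.

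Next I would feed this into Remark~\ref{bemnt1}(i). Using that the rescaling operation $r(\cdot)$ on weighted sets is multiplicative, $r(r'W) = (rr')W$, and commutes with $\Cup$, one obtains
\begin{align*}
\mathrm{Spec}(F_{\alpha'\beta',p}^{\mathbb{R}^n/c\Lambda})
&= \alpha'\,\mathrm{Spec}(\Delta_0^{\mathbb{R}^n/c\Lambda}) \; {}^{\binom{n-1}{p-1}}\!\Cup^{\binom{n-1}{p}} \; \beta'\,\mathrm{Spec}(\Delta_0^{\mathbb{R}^n/c\Lambda}) \\
&= \gamma\,A \; {}^{\binom{n-1}{p-1}}\!\Cup^{\binom{n-1}{p}} \; \delta\,A
= \mathrm{Spec}(F_{\gamma \delta,p}^{\mathbb{R}^n/\Lambda}),
\end{align*}
where the last equality is once more Remark~\ref{bemnt1}(i).

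Therefore $F_{\alpha\beta,p}^{\mathbb{R}^n/\Lambda}$ and $F_{\alpha'\beta',p}^{\mathbb{R}^n/c\Lambda}$ are isospectral if and only if $F_{\alpha\beta,p}^{\mathbb{R}^n/\Lambda}$ and $F_{\gamma\delta,p}^{\mathbb{R}^n/\Lambda}$ are isospectral. For $n \neq 2p$, Theorem~\ref{4} turns this into $(\alpha,\beta) = (\gamma,\delta)$, i.e.\ $(\alpha',\beta') = (c^2\alpha, c^2\beta)$; for $n = 2p$ it becomes $\{\alpha,\beta\} = \{\gamma,\delta\}$, i.e.\ $\{\alpha',\beta'\} = \{c^2\alpha, c^2\beta\}$, which settles both cases. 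The argument presents no real obstacle; the only points needing a little care are the identification $(c\Lambda)^* = c^{-1}\Lambda^*$, so that the whole Laplace spectrum including multiplicities rescales by $c^{-2}$, and the elementary bookkeeping that $r(\cdot)$ distributes over $\Cup$ and composes like scalar multiplication, which together allow the reduction to Theorem~\ref{4}.
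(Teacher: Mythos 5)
Your proposal is correct and follows essentially the same route as the paper: both identify $(c\Lambda)^* = \tfrac{1}{c}\Lambda^*$, deduce that $\mathrm{Spec}(F_{\alpha'\beta',p}^{\mathbb{R}^n/c\Lambda}) = \mathrm{Spec}\bigl(F_{\frac{\alpha'}{c^2}\frac{\beta'}{c^2},p}^{\mathbb{R}^n/\Lambda}\bigr)$ via the weighted-union description of the spectrum, and then invoke Theorem~\ref{4}. No gaps.
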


\begin{proof}
For the spectrum of $F_{\alpha' \beta', p}^{\mathbb{R}^n/c\Lambda}$, we have
\begin{align*}
\mathrm{Spec}\left(F_{\alpha' \beta', p}^{\mathbb{R}^n/c\Lambda}\right) &= \alpha' \left( \mathop{\Cup}_{l \in \frac{1}{c}\Lambda^*} \{4\pi^2|l|^2\}_1 \right) \ ^{\binom{n-1}{p-1}} \! \Cup \ \!\!\!^{\binom{n-1}{p}} \beta' \left( \mathop{\Cup}_{l \in \frac{1}{c}\Lambda^*} \{4\pi^2|l|^2\}_1 \right) \\
&= \frac{\alpha'}{c^2} \left( \mathop{\Cup}_{l \in \Lambda^*} \{4\pi^2|l|^2\}_1 \right) \ ^{\binom{n-1}{p-1}} \! \Cup \ \!\!\!^{\binom{n-1}{p}} \frac{\beta'}{c^2} \left( \mathop{\Cup}_{l \in \Lambda^*} \{4\pi^2|l|^2\}_1 \right) \\
&= \mathrm{Spec}\left(F_{\frac{\alpha'}{c^2} \frac{\beta'}{c^2}, p}^{\mathbb{R}^n/\Lambda}\right) .
\end{align*}
Theorem \ref{4} therefore implies that $F_{\alpha \beta, p}^{\mathbb{R}^n/\Lambda}$ and $F_{\alpha' \beta', p}^{\mathbb{R}^n/c\Lambda}$ are isospectral in dimension $n \neq 2p$ if and only if $(\alpha', \beta') = (c^2\alpha, c^2 \beta)$ and in dimension $n=2p$ if and only if $\{\alpha', \beta'\} = \{c^2\alpha, c^2 \beta \}$.
\end{proof}

\begin{bem}
Given two $n$-dimensional flat tori $T$ and $T^\prime$ together with parameters $\alpha, \beta, \alpha^\prime, \beta^\prime>0$, one can consider the operators $F_{\alpha\beta, p}^T$ and $F_{\alpha^\prime\beta^\prime, p}^{T^\prime}$. For $\alpha = \alpha^\prime$, $\beta = \beta^\prime$, it is clear from Prop.~\ref{SatzIsoNonIso} that in dimensions $n \geq 4$, one can choose non-isometric tori $T$, $T^\prime$ in such a way that $F_{\alpha\beta, p}^T$ and $F_{\alpha^\prime\beta^\prime, p}^{T^\prime}$ are isospectral. The author expects that in a similar way, one can find an example of non-isometric tori  $T$, $T^\prime$ such that $F_{\alpha\beta, p}^T$ and $F_{\alpha^\prime\beta^\prime, p}^{T^\prime}$ are isospectral for $\{\alpha, \beta\} \neq \{\alpha^\prime, \beta^\prime\}$. 

On the other hand, the following question is still open: In dimension $n=2$ or $3$, does the isospectrality of $F_{\alpha\beta, p}^T$ and $F_{\alpha^\prime\beta^\prime, p}^{T^\prime}$ already imply that $T$ is isometric to $T^\prime$ and $\alpha = \alpha^\prime$, $\beta = \beta^\prime$ (respectively $\{\alpha, \beta\} = \{\alpha^\prime, \beta^\prime\}$ if $n=2$ and $p=1$)?
\end{bem}


\section{Spectrum on round spheres}

Now we consider for $n \in \mathbb{N}$ the $n$-dimensional unit sphere $(S^n, g) \subset (\mathbb{R}^{n+1}, g_{\mathrm{std}})$, which is embedded in $\mathbb{R}^{n+1}$ via the canonical inclusion $\iota: S^n \rightarrow \mathbb{R}^{n+1}$ and endowed with the metric $g := \iota^*g_{\mathrm{std}}$ induced by the standard metric $g_{\mathrm{std}}(\cdot,\cdot):= \langle \cdot,\cdot \rangle$ on $\mathbb{R}^{n+1}$. \\
In the following, let $\alpha, \beta > 0$ and $1 \leq p \leq n$. \\

For vector fields $X \in \Gamma(\mathbb{R}^{n+1}, T\mathbb{R}^{n+1})$ on $\mathbb{R}^{n+1}$ we denote by $\widetilde{X} := X\big{|}_{S^n}$ the restriction of $X$ to $S^n$. Note that this is a vector field on $S^n$ if and only if $X$ is tangent to $S^n$. From now on, let $\nabla$ be the Levi-Civita connection on $\mathbb{R}^{n+1}$ and $\widetilde{\nabla}$ the one on $S^n$. The outward directed normal vector field we call 
\begin{align*}
\nu: \mathbb{R}^{n+1} \setminus \{0\} \rightarrow T\mathbb{R}^{n+1}: x \mapsto \frac{x}{\| x \|} .
\end{align*}

Its covariant derivative with respect to $X \in \Gamma(\mathbb{R}^{n+1}, T\mathbb{R}^{n+1})$ is
\begin{align}  \label{nxn}
\nabla_X \nu = \frac{1}{r} \big(X - \langle X, \nu \rangle \nu \big).
\end{align}
Here, $r : \mathbb{R}^{n+1} \rightarrow \mathbb{R}_{\geq 0}$ is defined by $x \mapsto r(x) := \|x\|$. \\
The connections $\nabla$ and $\widetilde{\nabla}$ are related as follows:
For $X, Y \in \Gamma(\mathbb{R}^{n+1}, T\mathbb{R}^{n+1})$ tangent to $S^n$, we have
\begin{align} \label{nnt}
\widetilde{\nabla_X Y} &= \widetilde{\nabla}_{\widetilde{X}} \widetilde{Y} - \langle \widetilde{X}, \widetilde{Y} \rangle \widetilde{\nu} .
\end{align}

On can easily convince oneself of the fact that
\begin{align}  \label{ivt}
\iota^*(\omega(X_1, ..., X_p)) = (\iota^*\omega)(\widetilde{X_1}, ..., \widetilde{X_p})
\end{align}
for all $\omega \in \Omega^p(\mathbb{R}^{n+1})$ and all $X_1, ..., X_p \in \Gamma(\mathbb{R}^{n+1},T\mathbb{R}^{n+1})$ tangent to $S^n$, since $d\iota(\widetilde{X_i}) = \widetilde{X_i}$ for $i \in \{1, ..., p\}$. \\

The exterior derivative $d$ is natural, i.e.\ commutes with the pullback $f^*$ along differentiable maps $f$.
In general, this is not true for the co-differential $\delta$ instead of $d$. In the following lemma, we investigate in which way $\delta$ commutes with the pullback $\iota^*: \Omega^*(\mathbb{R}^{n+1}) \rightarrow \Omega^*(S^n)$.

\begin{lem} \label{id}
Let $\omega \in \Omega^p(\mathbb{R}^{n+1})$. Then
\begin{align*}
\iota^*\delta^{\mathbb{R}^{n+1}}\omega &= \delta^{S^n}\iota^*\omega - \iota^* \big( (n-p+1) \cdot (\nu \lrcorner \omega) + \nu \lrcorner  \nabla_{\nu} \omega \big).
\end{align*}
\end{lem}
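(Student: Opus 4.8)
I will compute $\iota^*\delta^{\mathbb{R}^{n+1}}\omega$ directly using the formula $\delta = -\sum_i e_i \lrcorner \nabla_{e_i}$, choosing a local orthonormal frame on $\mathbb{R}^{n+1}$ adapted to $S^n$, namely $\{e_1,\dots,e_n,\nu\}$ where $e_1,\dots,e_n$ restrict to an orthonormal frame on $S^n$ and $\nu$ is the outward unit normal. The co-differential on $\mathbb{R}^{n+1}$ then splits as $\delta^{\mathbb{R}^{n+1}}\omega = -\sum_{i=1}^n e_i \lrcorner \nabla_{e_i}\omega - \nu \lrcorner \nabla_\nu \omega$. The second term already matches one of the correction terms in the claim after applying $\iota^*$, so the work is to identify the first sum with $\delta^{S^n}\iota^*\omega$ plus a multiple of $\nu \lrcorner \omega$.

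\textbf{Key steps.} First I would express the tangential part $-\sum_{i=1}^n e_i \lrcorner \nabla_{e_i}\omega$ by decomposing $\nabla_{e_i}\omega = \widetilde\nabla_{e_i}(\iota^*\omega)$-type terms plus normal corrections. Concretely, using \eqref{nnt} the ambient connection $\nabla_{e_i}Y$ on tangent fields differs from $\widetilde\nabla_{e_i}\widetilde Y$ by $-\langle e_i, Y\rangle \nu$; dually, differentiating a form, $\nabla_{e_i}\omega$ acting on tangent vectors picks up terms involving $\nu\lrcorner\omega$ because the frame vectors themselves have normal derivatives $\nabla_{e_i}e_j = \widetilde\nabla_{e_i}e_j - \delta_{ij}\nu$ (by \eqref{nnt}) and $\nabla_{e_i}\nu = \frac{1}{r}e_i$ by \eqref{nxn} (with $r=1$ on $S^n$). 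Carefully contracting $e_i \lrcorner \nabla_{e_i}\omega$ and restricting to $S^n$ via \eqref{ivt}, the ``main'' piece assembles into $\delta^{S^n}\iota^*\omega = -\sum_{i=1}^n \widetilde e_i \lrcorner \widetilde\nabla_{\widetilde e_i}(\iota^*\omega)$, while each of the $n$ frame directions contributes a copy of $\nu\lrcorner\omega$ from the Weingarten-type terms — but one must track that $\omega$ is a $p$-form, so only $p$ (or $n-p+1$, after combining the two sources of such terms: one from $\nabla_{e_i}e_j$ giving $n$ copies with a sign, one from reindexing the insertion/the normal component of $\omega$ itself) such corrections survive, yielding the coefficient $(n-p+1)$. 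I would then assemble: $\iota^*\delta^{\mathbb{R}^{n+1}}\omega = \delta^{S^n}\iota^*\omega - (n-p+1)\,\iota^*(\nu\lrcorner\omega) - \iota^*(\nu\lrcorner\nabla_\nu\omega)$, which is the claim.

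\textbf{Main obstacle.} The delicate point is the bookkeeping that produces the exact coefficient $n-p+1$: there are two distinct mechanisms generating $\nu\lrcorner\omega$ terms — the normal component of $\nabla_{e_i}e_j$ (contributing essentially $n$ such terms with one sign) and the fact that when $\omega$ is evaluated on vectors one of which is being differentiated into the normal direction, inserting $e_i$ into slots already occupied interacts with the antisymmetry of $\omega$ (contributing $-(p-1)$ such terms, roughly). Getting the combinatorial signs and the interaction with the interior product $e_i\lrcorner$ exactly right — and confirming the answer is frame-independent — is where the real care is needed; a clean way to organize it is to write $\delta$ in the ambient frame, substitute $\nabla_{e_i} = \widetilde\nabla_{e_i}$ plus the second-fundamental-form corrections (which on $S^n$ is just $-\langle\cdot,\cdot\rangle\nu$, i.e. $\mathrm{II}=-g$), and then use the standard identity relating $\delta$ on a submanifold to the ambient $\delta$ with the mean-curvature term $H\cdot(\nu\lrcorner\omega)$, here with $H = n$ suitably normalized, plus the degree-dependent shift from the one ``missing'' frame direction, giving $n - (p-1) = n-p+1$.
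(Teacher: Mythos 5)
Your proposal is correct and follows essentially the same route as the paper: the paper likewise works in an adapted orthonormal frame $\{e_1,\dots,e_n,\nu\}$, first shows $\iota^*\nabla_X\omega = \widetilde{\nabla}_{\widetilde{X}}\iota^*\omega + \iota^*\big(X^{\flat}\wedge(\nu\lrcorner\omega)\big)$ using \eqref{nnt}, and then contracts, with the coefficient $n-p+1$ arising exactly as you describe from $\sum_{i=1}^{n} e_i\lrcorner\big(e^i\wedge(\nu\lrcorner\omega)\big) = \big(n-(p-1)\big)\,\nu\lrcorner\omega$. Your bookkeeping of the two sources of $\nu\lrcorner\omega$ terms matches the paper's computation.
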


\begin{proof}
Using the naturality of $d$ and formulas \eqref{nnt} and \eqref{ivt}, it is straightforward to show that
\begin{align*}
\iota^*\nabla_X \omega = \widetilde{\nabla}_{\widetilde{X}} \iota^*\omega + \iota^* \big(X^{\flat} \wedge (\nu \lrcorner \omega)  \big),
\end{align*}
where $\flat$ denotes the inverse of the musical isomorphism $\sharp$. \\
Let $U \subseteq S^n$ be open and $\{\tilde{e_1}, ..., \tilde{e_n}\} \subset \Gamma(U,TS^n)$ a local orthonormal basis of $TS^n$. We set $V:=\{y \in \mathbb{R}^{n+1} \mid \frac{y}{\|y\|} \in U\}$. For $i \in \{1, ...,n\}$, let $e_i \in \Gamma(V,T\mathbb{R}^{n+1})$ be the radial constant extension of $\tilde{e_i}$ on $V \subset \mathbb{R}^{n+1}$, i.e.\ defined via $e_i(y) := \tilde{e_i}(\frac{y}{\|y\|})$ for $y \in V$. Then $\{e_1, ..., e_n, \nu\}$ is a local orthonormal basis of $T\mathbb{R}^{n+1}$. 

Now with a view on \eqref{Formulasddelta}, one can use the above calculation to obtain
\begin{align*}
\iota^*\delta^{\mathbb{R}^{n+1}}\omega &= - \iota^* \left( \sum_{i=1}^{n} e_i \lrcorner \nabla_{e_i}\omega \right) - \iota^* \big( \nu \lrcorner  \nabla_{\nu}\omega \big) \\
&= - \sum_{i=1}^n \widetilde{e_i}  \lrcorner \widetilde{\nabla}_{\widetilde{e_i}} \iota^*\omega - \sum_{i=1}^n  \widetilde{e_i}  \lrcorner  \iota^*\big(e^i \wedge (\nu \lrcorner \omega)\big) - \iota^* (\nu \lrcorner \nabla_{\nu}\omega) \\
&= \delta^{S^n}\iota^*\omega - \iota^* \big( (n - p + 1) \cdot (\nu \lrcorner \omega) + \nu \lrcorner \nabla_{\nu}\omega \big). \qedhere
\end{align*} 
\end{proof}

\subsection{Eigendecomposition}

To determine the spectrum $\mathrm{Spec}(F_{\alpha \beta}^{S^n})$, we first investigate the relation between the operators $F_{\alpha \beta}^{\mathbb{R}^{n+1}}$ and $F_{\alpha \beta}^{S^n}$.

\begin{prop} \label{clu}
For $\omega \in \Omega^p(\mathbb{R}^{n+1})$, we have
\begin{align*}
\iota^* F_{\alpha \beta}^{\mathbb{R}^{n+1}}\omega  =& ~~ F_{\alpha \beta}^{S^n} \iota^* \omega + \alpha \iota^*\Big( (p-n-1) d\big(\nu \lrcorner \omega \big) - d\big(\nu \lrcorner \nabla_{\nu}\omega \big) \Big)  \\ &+ \beta \iota^* \Big((n-p-1) d\big(\nu \lrcorner \omega \big) + d\big(\nu \lrcorner \nabla_{\nu}\omega \big) + p(p-n+1)\omega - n \nabla_{\nu}\omega - \nabla_{\nu}\nabla_{\nu}\omega \Big) .
\end{align*}
\end{prop}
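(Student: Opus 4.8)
The plan is to express $F_{\alpha\beta}^{\mathbb{R}^{n+1}} = \alpha d\delta^{\mathbb{R}^{n+1}} + \beta \delta^{\mathbb{R}^{n+1}} d$, pull back by $\iota^*$ term by term, and repeatedly invoke Lemma \ref{id} together with the naturality of $d$ (so that $\iota^* d = d\iota^*$). For the first summand $\alpha \iota^* d\delta^{\mathbb{R}^{n+1}}\omega$, I would commute $\iota^*$ past the outer $d$ and then apply Lemma \ref{id} to $\delta^{\mathbb{R}^{n+1}}\omega$, obtaining
\begin{align*}
\alpha \iota^* d \delta^{\mathbb{R}^{n+1}}\omega = \alpha\, d\, \delta^{S^n}\iota^*\omega - \alpha\, d\, \iota^*\big((n-p+1)(\nu\lrcorner\omega) + \nu\lrcorner\nabla_\nu\omega\big),
\end{align*}
which already accounts for the $\alpha d\delta^{S^n}\iota^*\omega$ piece of $F_{\alpha\beta}^{S^n}\iota^*\omega$ and the first line of the asserted formula, modulo a sign bookkeeping between $p-n-1$ and $-(n-p+1)$.

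For the second summand $\beta\iota^* \delta^{\mathbb{R}^{n+1}} d\omega$, the idea is to apply Lemma \ref{id} with $d\omega \in \Omega^{p+1}(\mathbb{R}^{n+1})$ in place of $\omega$, so that the degree-dependent constant becomes $n-(p+1)+1 = n-p$, giving
\begin{align*}
\beta\iota^*\delta^{\mathbb{R}^{n+1}}d\omega = \beta\,\delta^{S^n}\iota^* d\omega - \beta\,\iota^*\big((n-p)(\nu\lrcorner d\omega) + \nu\lrcorner\nabla_\nu d\omega\big) = \beta\,\delta^{S^n}d\,\iota^*\omega - \beta\,\iota^*\big((n-p)(\nu\lrcorner d\omega) + \nu\lrcorner\nabla_\nu d\omega\big),
\end{align*}
again using naturality of $d$. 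This produces the $\beta\delta^{S^n}d\iota^*\omega$ piece. The remaining work is then purely computational: I would rewrite $\nu\lrcorner d\omega$ and $\nu\lrcorner\nabla_\nu d\omega$ in terms of $d(\nu\lrcorner\omega)$, $\nabla_\nu\omega$, $d(\nu\lrcorner\nabla_\nu\omega)$, $\nabla_\nu\nabla_\nu\omega$ and $\omega$ itself. The key identity here is a Cartan-type formula: $\nu\lrcorner d\omega = \mathcal{L}_\nu\omega - d(\nu\lrcorner\omega)$, and I would convert the Lie derivative $\mathcal{L}_\nu\omega$ into $\nabla_\nu\omega$ plus correction terms coming from $\nabla\nu = \frac{1}{r}(\mathrm{id} - \nu\otimes\nu^\flat)$ via \eqref{nxn}; on $S^n$ (where $r=1$) each covariant derivative $\nabla_{e_i}\nu = e_i$, which is exactly what generates the constants $p(p-n+1)$ and $-n$ and the terms $\omega$, $\nabla_\nu\omega$.

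The main obstacle I anticipate is the careful manipulation of $\nu\lrcorner\nabla_\nu d\omega$: one must commute $\nabla_\nu$ past $d$ (which does not hold naively — $[\nabla_\nu, d]$ involves the curvature/torsion-free structure and the derivative of the frame, but on flat $\mathbb{R}^{n+1}$ one has $\nabla_\nu(d\omega) = d(\nabla_\nu\omega) + (\text{terms from }\nabla_\nu e_i, \nabla_\nu e^i)$), and then interchange the two contractions/insertions of $\nu$ with the wedge by $\nu^\flat$, tracking every sign produced by moving $\nu\lrcorner$ across $e^i\wedge(\,\cdot\,)$ as in the last line of the proof of Lemma \ref{id}. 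Collecting the coefficient of $\omega$ requires summing $\sum_{i=1}^n e_i\lrcorner(e^i\wedge(\nu\lrcorner\nabla_\nu\omega))$-type expressions and using that $\nu\lrcorner\nu\lrcorner = 0$; bookkeeping these contributions is where the precise constants $p(p-n+1)$ and $-n$ emerge, and it is the only genuinely delicate part. Everything else is a direct substitution of Lemma \ref{id}, \eqref{nxn}, \eqref{nnt} and the naturality of $d$, followed by regrouping according to the coefficients $\alpha$ and $\beta$.
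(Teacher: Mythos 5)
Your proposal follows essentially the same route as the paper: naturality of $d$ plus Lemma \ref{id} applied to $\omega$ for the $\alpha d\delta$ term and to $d\omega$ (with the degree constant $n-p$) for the $\beta\delta d$ term, followed by an explicit reduction of $\nu\lrcorner d\omega$ and $\nu\lrcorner\nabla_\nu d\omega$ using $\nabla\nu=\frac1r(\mathrm{id}-\langle\cdot,\nu\rangle\nu)$; both displayed intermediate formulas are correct and match \eqref{1tegl} and \eqref{ivmddm}. The only cosmetic difference is that you derive $\iota^*(\nu\lrcorner d\omega)=\iota^*(-d(\nu\lrcorner\omega)+p\omega+\nabla_\nu\omega)$ via Cartan's formula rather than the paper's direct frame computation, and you correctly flag the $\nu\lrcorner\nabla_\nu d\omega$ bookkeeping (via $[e_i,\nu]=\frac1r e_i$) as the only delicate remaining step.
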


\begin{proof}
Due to the naturality of $d$ and Lemma \ref{id}, we see that
\begin{align} \label{1tegl}
\iota^*d\delta^{\mathbb{R}^{n+1}} \omega = d\iota^*\delta^{\mathbb{R}^{n+1}} \omega = d\delta^{S^n} \iota^*\omega - \iota^* \big( (n-p+1) d(\nu \lrcorner \omega) + d(\nu \lrcorner \nabla_{\nu}\omega) \big),
\end{align}
which is the first term, and
\begin{align} \label{ivmddm}
\iota^*\delta^{\mathbb{R}^{n+1}}d\omega &= \delta^{S^n}d\iota^*\omega - \iota^* \big( (n-p) \cdot (\nu \lrcorner d\omega) + \nu \lrcorner \nabla_{\nu}d\omega \big) .
\end{align}
Now we rewrite the last two terms of \eqref{ivmddm}. A straightforward calculation shows that
\begin{align*}
\nu \lrcorner d\omega &= \sum_{i=1}^n \left( \frac{p}{r}\omega - d(\nu \lrcorner \omega) + \nabla_{\nu}\omega - \nu^{\flat} \wedge (\nu \lrcorner \nabla_{\nu}\omega)  \right),
\end{align*}
hence
\begin{align} \label{ngl1}
\iota^*( \nu \lrcorner d\omega ) = \iota^* \big( p \omega-d(\nu \lrcorner \omega) + \nabla_{\nu}\omega \big) .
\end{align}
Now let $\{e_1, ..., e_n, e_{n+1}:=\nu\}$ be a local orthonormal basis of $T\mathbb{R}^{n+1}$ as in the proof of Lemma \ref{id}. We notice that $\nabla_{\nu}e_i=0$ for $i \in \{1, ..., n+1\}$, which is used to calculate
\begin{align} \label{nwoei}
\nu \lrcorner \nabla_{\nu}d\omega &=  - \sum_{i=1}^n e^i \wedge (\nu \lrcorner \nabla_{\nu}\nabla_{e_i}\omega) + \nabla_{\nu}\nabla_{\nu}\omega - \nu^{\flat} \wedge (\nu \lrcorner \nabla_{\nu}\nabla_{\nu}\omega).
\end{align}
Moreover, using $[e_i, \nu] = \frac{1}{r} e_i$, one calculates
\begin{align} \begin{split} \label{nr}
\nu \lrcorner \nabla_{\nu}\nabla_{e_i}\omega &= \nabla_{e_i}\nabla_{\nu}(\nu \lrcorner \omega) - \frac{1}{r} \nabla_{e_i}(\nu \lrcorner \omega)  + \frac{1}{r^2} e_i \lrcorner \omega - \frac{1}{r} e_i \lrcorner \nabla_{\nu}\omega
 . \end{split}
\end{align}
Inserting this into \eqref{nwoei} and applying the pullback  gives
\begin{align} \begin{split} \label{ngl2}
\iota^* (\nu \lrcorner \nabla_{\nu}d\omega ) &= \iota^*( - d(\nabla_{\nu}(\nu \lrcorner \omega)) + d(\nu \lrcorner \omega) - p \omega + p \nabla_{\nu}\omega + \nabla_{\nu}\nabla_{\nu}\omega) .\end{split} 
\end{align}
Finally, inserting \eqref{ngl1} and \eqref{ngl2} into \eqref{ivmddm}, we obtain
\begin{align*} 
\iota^*\delta^{\mathbb{R}^{n+1}}d\omega 
= \delta^{S^n}d\iota^*\omega + \iota^* \Big( (n-p-1) d( \nu \lrcorner \omega) + p(p-n+1) \omega - n \nabla_{\nu}\omega + d( \nu \lrcorner \nabla_{\nu}\omega) - \nabla_{\nu}\nabla_{\nu}\omega \Big) . 
\end{align*}
Together with \eqref{1tegl}, the proposition follows.
\end{proof}

\begin{kor}
For $\omega \in \Omega^p(\mathbb{R}^{n+1})$, we have
\begin{align*}
\iota^* (\Delta^{\mathbb{R}^{n+1}}\omega) = \Delta^{S^n}(\iota^* \omega) + \iota^*\Big(p(p-n+1)\omega -2 d (\nu \lrcorner \omega) - n \nabla_{\nu}\omega - \nabla_{\nu}\nabla_{\nu}\omega \Big) .
\end{align*}
\end{kor}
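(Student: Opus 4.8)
The plan is to obtain this identity simply by specializing Proposition \ref{clu} to $\alpha = \beta = 1$, since by definition $\Delta^M_p = F_{11,p}^M$ for any Riemannian manifold $M$; in particular $\Delta^{\mathbb{R}^{n+1}} = F_{11}^{\mathbb{R}^{n+1}}$ and $\Delta^{S^n} = F_{11}^{S^n}$. So I would substitute $\alpha = \beta = 1$ into the formula of Proposition \ref{clu} and collect the terms inside the two pullbacks.

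Concretely, the coefficient of $d(\nu \lrcorner \omega)$ becomes $(p - n - 1) + (n - p - 1) = -2$; the two contributions $-\,d(\nu \lrcorner \nabla_\nu \omega)$ from the $\alpha$-part and $+\,d(\nu \lrcorner \nabla_\nu \omega)$ from the $\beta$-part cancel; and the remaining summands $p(p-n+1)\omega - n \nabla_\nu \omega - \nabla_\nu \nabla_\nu \omega$ are carried over unchanged from the $\beta$-part. This leaves exactly
\[
\iota^* (\Delta^{\mathbb{R}^{n+1}}\omega) = \Delta^{S^n}(\iota^* \omega) + \iota^*\Big(p(p-n+1)\omega -2\, d (\nu \lrcorner \omega) - n \nabla_{\nu}\omega - \nabla_{\nu}\nabla_{\nu}\omega \Big),
\]
which is the claim.

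There is no real obstacle here: all the analytic work — the relation between $\delta^{\mathbb{R}^{n+1}}$ and $\delta^{S^n}$ from Lemma \ref{id}, the commutator identities involving $\nabla_\nu$, and the bookkeeping of the interior-product and wedge terms — has already been carried out in the proof of Proposition \ref{clu}. Equivalently, one could reach the same conclusion by adding equation \eqref{1tegl} (the expression for $\iota^* d\delta^{\mathbb{R}^{n+1}}\omega$) to the final displayed formula of that proof (the expression for $\iota^* \delta^{\mathbb{R}^{n+1}} d\omega$), since $\Delta^{\mathbb{R}^{n+1}} = d\delta^{\mathbb{R}^{n+1}} + \delta^{\mathbb{R}^{n+1}} d$; this amounts to precisely the same arithmetic.
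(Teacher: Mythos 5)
Your proposal is correct and is exactly the paper's own proof: the corollary is obtained by setting $\alpha=\beta=1$ in Proposition \ref{clu}, and your arithmetic (the $d(\nu\lrcorner\omega)$ coefficients summing to $-2$, the cancellation of the $d(\nu\lrcorner\nabla_\nu\omega)$ terms) checks out.
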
    

\begin{proof}
This is Proposition \ref{clu} for $\alpha=\beta=1$.
\end{proof}

\begin{deff}
Let $\{e_1, ..., e_{n+1}\}$ be the global standard basis of $T\mathbb{R}^{n+1}$ and $\{e^1, ..., e^{n+1}\}$ the associated dual basis. For each $k \in \mathbb{N}_0$ we define 
\begin{align*}
H^0_k := \{ P \in \mathcal{C}^{\infty}(\mathbb{R}^{n+1}) \mid P \ \text{is a homogeneous polynomial of degree} \ k \ \text{with} \ \Delta_0^{\mathbb{R}^{n+1}}P = 0 \}
\end{align*}
and the space
\begin{align*}
H_k^p := \left\{ \omega = \hspace{-1em} \sum_{1 \leq i_1 < ... < i_p \leq n+1} \hspace{-1em} \omega_{i_1 ... i_p} e^{i_1} \wedge ... \wedge e^{i_p} \in \Omega^p(\mathbb{R}^{n+1}) \ \right| \  \omega_{i_1 ... i_p} \in H^0_k  \ \
\text{and} \ \ \delta^{\mathbb{R}^{n+1}} \omega = 0 \bigg\}
\end{align*}
of all co-closed harmonic homogeneous $p$-forms of degree $k$ on $\mathbb{R}^{n+1}$.
\end{deff}

\begin{Not}
For $f \in \mathcal{C}^{\infty}(\mathbb{R}^{n+1})$ and $x \in \mathbb{R}^{n+1} \setminus \{0\}$, we set $\hat{f}(x) := f(\frac{x}{\|x\|})$.  
Then $\hat{f} \big{|}_{S^n} = f \big{|}_{S^n}$ and $\hat{f}$ is radially constant, i.e.\ $\hat{f}(\lambda x) = \hat{f} (x)$ for all $\lambda > 0$ and $x \in \mathbb{R}^{n+1} \setminus \{0\}$, and therefore $\partial_{\nu}\hat{f} = 0$.
\end{Not}

\begin{deff}
For $k \in \mathbb{N}_0$ we set
\begin{align} \label{lkbp}
\lambda_{\beta,p}^k &:= \beta(k+p)(k+n-p-1) , \\ \label{mkap}
\mu_{\alpha,p}^k &:= \alpha(k+p)(k+n-p+1)
\end{align}
and
\begin{align*}
V_k^p &:=  \iota^* \{ \omega \in H_k^p \mid \nu \lrcorner \omega=0\} , \\
W_k^p &:= \iota^*d(H^{p-1}_{k+1}) .
\end{align*}
\end{deff}

\subsubsection{Eigenforms of $\delta^{S^n} d$}

Let $\omega = \in H_k^p$. Because of $\delta^{\mathbb{R}^{n+1}}\omega = 0$, we have
\begin{align*}
d\delta^{\mathbb{R}^{n+1}}\omega = 0
\end{align*}
and
\begin{align*}
\delta^{\mathbb{R}^{n+1}} d \omega =& (d\delta^{\mathbb{R}^{n+1}} + \delta^{\mathbb{R}^{n+1}} d)\omega - d\underbrace{\delta^{\mathbb{R}^{n+1}}\omega}_{=0} = \Delta_p^{\mathbb{R}^{n+1}} \omega = 0 .
\end{align*}
Moreover, since $\omega$ is homogeneous of degree $k$, we have
\begin{align} \label{nno} 
\nabla_{\nu}\omega =\frac{k}{r} \omega  
\end{align}
and therefore
\begin{align*}
\nabla_{\nu}\nabla_{\nu}\omega =& - \frac{k}{r^2} \omega + \frac{k}{r} \nabla_{\nu} \omega = - \frac{k}{r^2} \omega + \frac{k^2}{r^2} \omega = \frac{k(k-1)}{r^2} \omega .
\end{align*}

For $\omega \in H_0^p$ with $\nu \lrcorner \omega = 0$, we have $\omega = 0$. Hence, $\iota^*\omega = 0$ is not an eigenform of $F_{\alpha \beta,p}^{S^n}$. Therefore, let $\omega \in H_k^p$ with $k \neq 0$ and $\nu \lrcorner \omega = 0$. Proposition \ref{clu} together with the above calculations gives
\begin{align*}
F_{\alpha \beta}^{S^n} \iota^* \omega = \beta (k+p)(k+n-p-1) \iota^* \omega .
\end{align*}

Thus $\iota^*\omega$ is an eigenform of $F_{\alpha \beta,p}^{S^n}$ to the eigenvalue $\lambda_{\beta,p}^k$. Consequently, we have shown that
\begin{align*}
\mathop{\Cup}_{k \in \mathbb{N}} \{ \lambda_{\beta,p}^k \}_{\mathrm{dim}(V_k^p)} \subseteq \mathrm{Spec}(F_{\alpha \beta, p}^{S^n})
\end{align*}
and for all $k \in \mathbb{N}$ that
\begin{align*}
V_k^p = \iota^* \{ \omega \in H_k^p \mid \nu \lrcorner \omega = 0  \} \subseteq \ &\mathrm{Eig}\big(F_{\alpha \beta, p}^{S^n},  \lambda_{\beta,p}^k \big) .
\end{align*}

\subsubsection{Eigenforms of $d\delta^{S^n}$}

Now let $\omega \in d(H_{k+1}^{p-1})$, i.e.\ $\omega = d\eta$ with $\eta \in H^{p-1}_{k+1}$. We have
\begin{align*}
d \delta^{\mathbb{R}^{n+1}} \omega &= d \hspace{-1.4em} \underbrace{\delta^{\mathbb{R}^{n+1}} d}_{= \Delta_p^{\mathbb{R}^{n+1}} - d \delta^{\mathbb{R}^{n+1}}} \hspace{-1.4em} \eta = d \underbrace{\Delta_p^{\mathbb{R}^{n+1}} \eta}_{=0} - \underbrace{d \ d}_{=0} \delta^{\mathbb{R}^{n+1}} \eta = 0 , \\
\delta^{\mathbb{R}^{n+1}} d \omega &= \delta^{\mathbb{R}^{n+1}} \underbrace{d \ d}_{\equiv 0} \eta = 0 
\end{align*}
as well as
\begin{align*}
\iota^*d(\nu \lrcorner \omega) \stackrel{\eqref{ngl1}}{=} d\iota^*(-d(\nu \lrcorner \eta) + (p-1)\eta + \nabla_{\nu}\eta) \stackrel{\eqref{nno}}{=} \iota^*\left((p-1)\omega + \frac{k+1}{r}\omega \right) =  (p+k)\iota^*\omega .
\end{align*}

Since $\omega(e_{i_1}, ..., e_{i_p})$ are homogeneous polynomials of degree $k$ for $1 \leq i_1 < ... i_p \leq n+1$, similar to \eqref{nno} we have
\begin{align*}
\nabla_{\nu}\omega = \frac{k}{r} \omega .
\end{align*}

The last two equations yield
\begin{align*}
\nabla_{\nu} \nabla_{\nu} \omega = \frac{k(k-1)}{r^2} \omega 
\end{align*}

and

\begin{align*}
\iota^*d(\nu \lrcorner \nabla_{\nu} \omega) = d \iota^* \left(\frac{k}{r} \cdot (\nu \lrcorner \omega) \right) = k d\iota^*(\nu \lrcorner \omega) = k(k+p)\iota^*\omega .
\end{align*}

Altogether, with Proposition \ref{clu} we obtain 
\begin{align*}
F_{\alpha \beta}^{S^n} \iota^* \omega = \alpha (k+p)(k+n-p+1) \iota^*\omega .
\end{align*}

So $\iota^*\omega$ is an eigenform of $F_{\alpha \beta, p}^{S^n}$ to the eigenvalue $\mu_{\alpha,p}^k$.
As a result
\begin{align*}
\mathop{\Cup}_{k \in \mathbb{N}_0} \{\mu_{\alpha,p}^k \}_{\mathrm{dim}(W_k^p)} \subseteq \mathrm{Spec}(F_{\alpha \beta, p}^{S^n}) 
\end{align*}
and, for all $k \in \mathbb{N}_0$,
\begin{align*}
W_k^p = \iota^*d(H^{p-1}_{k+1}) \subseteq \mathrm{Eig}\big(F_{\alpha \beta, p}^{S^n}, \mu_{\alpha,p}^k \big) .
\end{align*}

\subsubsection{Proof of the completeness}

Now the question arises wether $\lambda_{\beta, p}^k$ and $\mu_{\alpha, p}^l$ for $k \in \mathbb{N}$ and $l \in \mathbb{N}_0$ already form the entire spectrum of $F_{\alpha \beta, p}^{S^n}$.

\begin{bem} \label{vw}
We already showed that $V_k^p \subseteq \mathrm{Eig}\big(F_{\alpha \beta, p}^{S^n}, \lambda_{\beta, p}^k \big)$ and $W_l^p \subseteq \mathrm{Eig}\big(F_{\alpha \beta, p}^{S^n}, \mu_{\alpha, p}^l \big)$ for all $k \in \mathbb{N}$, $l \in \mathbb{N}_0$, $\alpha, \beta >0$ and $1 \leq p \leq n$, i.e.\ the elements of $V_k^p$ and $W_l^p$ are eigenforms of $F_{\alpha \beta, p}^{S^n}$ for every positive $\alpha$ and $\beta$.
\end{bem}

\begin{lem} \label{ewte}
For all $\alpha, \beta > 0$, $1 \leq p \leq n$ and $k,l \in \mathbb{N}_0$, we have
\begin{enumerate}
\item $\lambda_{\beta, p}^k = \lambda_{\beta, p}^l$ if and only if $k=l$. 
\item $\mu_{\alpha, p}^k = \mu_{\alpha, p}^l$ if and only if $k=l$. 
\item For $\alpha = \beta$, we have $\lambda_{\beta, p}^k = \mu_{\alpha, p}^l$ if and only if $k=l+1$ and $n=2p$. 
\item For $\frac{\alpha}{\beta} \in \mathbb{R} \setminus \mathbb{Q}$, we have $\lambda_{\beta,p}^k \neq \mu_{\alpha,p}^l$.
\end{enumerate}
\end{lem}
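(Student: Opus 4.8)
The plan is to strip off the positive scalars and reduce every assertion to elementary facts about the two $\mathbb{Z}$-valued quadratics $f(k):=(k+p)(k+n-p-1)$ and $g(l):=(l+p)(l+n-p+1)$, so that $\lambda_{\beta,p}^k=\beta f(k)$ and $\mu_{\alpha,p}^l=\alpha g(l)$. I would record two observations first. One: $k\mapsto f(k)$ is strictly increasing on $\mathbb{N}_0$, since $f(k+1)-f(k)=2k+n\geq n\geq 1>0$ (and likewise $g(k+1)-g(k)=2k+n+2>0$). Two: $g(l)=(l+p)(l+n-p+1)>0$ for every $l\in\mathbb{N}_0$, because $l+p\geq 1$ and $l+n-p+1\geq 1$; this is where the hypotheses $p\geq 1$ and $p\leq n$ enter, and this positivity is what makes the whole argument work.

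Parts (i) and (ii) then follow by factoring the relevant differences. One has $f(k)-f(l)=(k^2-l^2)+(n-1)(k-l)=(k-l)(k+l+n-1)$, and for $k\neq l$, say $k>l\geq 0$, both factors are positive ($k-l\geq 1$ and $k+l+n-1\geq n\geq 1$), hence $f(k)\neq f(l)$; the reverse implication is immediate. Part (ii) is the same computation with $g(k)-g(l)=(k-l)(k+l+n+1)$, where $k+l+n+1\geq n+1\geq 2>0$.

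For (iii) put $\alpha=\beta$, so the claim becomes $f(k)=g(l)\Longleftrightarrow(k=l+1\text{ and }n=2p)$. Setting $u:=l+p$ and $v:=l+n-p$, a direct expansion gives $f(l)=u(v-1)$, $g(l)=u(v+1)$, $f(l+1)=(u+1)v$ and $f(l+2)=(u+2)(v+1)$, hence $f(l)-g(l)=-2u<0$, $f(l+1)-g(l)=v-u=n-2p$, and $f(l+2)-g(l)=2(v+1)>0$ (using $u\geq 1$ and $v+1=l+n-p+1\geq 1$). Thus $f(l)<g(l)<f(l+2)$, and since $f$ is strictly increasing on $\mathbb{N}_0$, an equation $f(k)=g(l)$ with $k\in\mathbb{N}_0$ forces $l<k<l+2$, i.e.\ $k=l+1$; then $f(l+1)=g(l)$ amounts to $n-2p=0$. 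Conversely, if $k=l+1$ and $n=2p$ the same identity yields $f(k)=g(l)$.

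For (iv), assume $\alpha/\beta\notin\mathbb{Q}$ and suppose $\lambda_{\beta,p}^k=\mu_{\alpha,p}^l$, i.e.\ $\beta f(k)=\alpha g(l)$. Since $\alpha,\beta>0$ and $g(l)>0$, this forces $f(k)=\frac{\alpha}{\beta}\,g(l)>0$; in particular $f(k)$ is a strictly positive integer, so the low-degree degeneracies where $f$ could a priori vanish or be negative (namely $k=0$ with $n\in\{p,p+1\}$) are automatically excluded, and $\frac{\alpha}{\beta}=\frac{f(k)}{g(l)}\in\mathbb{Q}$, a contradiction. None of these steps is genuinely hard; the only point demanding some care is that $f$ need not be nonnegative at $k=0$, which is precisely why (iv) is run through the positivity of $g$ (rather than of $f$) and why (iii) is phrased via the squeeze $f(l)<g(l)<f(l+2)$.
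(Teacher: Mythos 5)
Your proof is correct. For parts (i), (ii) and (iv) it follows essentially the same elementary route as the paper: the paper merely asserts that $k\mapsto\lambda^k_{\beta,p}$ and $k\mapsto\mu^k_{\alpha,p}$ are injective, where you supply the clean factorizations $f(k)-f(l)=(k-l)(k+l+n-1)$ and $g(k)-g(l)=(k-l)(k+l+n+1)$, and for (iv) both arguments come down to the fact that $\alpha/\beta$ would have to equal a ratio of integers. For part (iii) your organization is genuinely different: the paper divides the equation $(k+p)(k+n-p-1)=(l+p)(l+n-p+1)$ to obtain $\tfrac{k+p}{l+p}=\tfrac{l+n-p+1}{k+n-p-1}$ and then excludes $k\le l$ and $l\le k-2$ by separate contradictions before expanding $f(l+1)=g(l)$ to get $n=2p$; you instead bracket $g(l)$ strictly between $f(l)$ and $f(l+2)$ via the identities $f(l)-g(l)=-2u$, $f(l+1)-g(l)=n-2p$, $f(l+2)-g(l)=2(v+1)$, and let the strict monotonicity of $f$ force $k=l+1$. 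The two are equivalent in substance, but your squeeze has the minor advantage of never dividing by $k+n-p-1$, which vanishes in the degenerate case $k=0$, $n=p+1$ that the paper's ratio manipulation passes over silently; your insistence on running (iv) through the positivity of $g$ rather than of $f$ likewise makes explicit a point the paper leaves implicit. Both proofs are complete and elementary; yours is slightly more careful at the boundary cases.
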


\begin{proof} 

It follows directly from the explicit formulas \eqref{lkbp} and \eqref{mkap} that $k \mapsto \lambda^k_{\beta,p}$ and $k \mapsto \mu^k_{\alpha,p}$ are injective functions. This implies i) and ii).

\begin{enumerate}
\setcounter{enumi}{2}
\item ``$\Rightarrow$'': Let $(k+p)(k+n-p-1) = (l+p)(l+n-p+1)$, i.e.\ $\frac{k+p}{l+p} = \frac{l+n-p+1}{k+n-p-1}$. Hence $k \neq l$. If $k \leq l$, then $\frac{k+p}{l+p} \leq 1$ and therefore $\frac{l+n-p+1}{k+n-p-1} \leq 1$. Thus $l \leq k-2$, and due to $l\geq k$ we would obtain $k \leq k-2$, a contradiction. So $k > l$. That is why, similarly to above, we obtain that $l > k-2$, hence $k > l > k-2$. Therefore $l = k-1$. With the assumption it follows that $(k+p)(k+n-p-1) = (k+p-1)(k+n-p)$. Expanding gives that $n=2p$. \\
``$\Leftarrow$'': This direction is trivial. 
\item Let now $\frac{\alpha}{\beta} \in \mathbb{R} \setminus \mathbb{Q}$. If $\lambda_{\beta,p}^k = \mu_{\alpha,p}^l$, then $\frac{\alpha}{\beta}(l+p)(l+n-p+1) = (k+p)(k+n-p-1) \in \mathbb{N}$, which is impossible.\qedhere
\end{enumerate}
\end{proof}

\begin{bem} \label{he}
For $k \in \mathbb{N}$, there is at most one $l \in \mathbb{N}_0$ with $\lambda_{\beta,p}^k = \mu_{\alpha,p}^l$ since, if there was an $l' \in \mathbb{N}_0$ with $\lambda_{\beta,p}^k = \mu_{\alpha,p}^{l'}$, this would imply that $\mu_{\alpha,p}^l = \mu_{\alpha,p}^{l'}$. But by Lemma \ref{ewte}, $l'=l$.
\end{bem}

\begin{lem} \label{og}
Let $k,l \in \mathbb{N}_0$ with $k \neq l$. Then the spaces $V_k^p$, $V_l^p$, $W_k^p$ and $W_l^p$ are pairwise perpendicular with respect to the $L^2$-scalar product. \\
In particular, $V_k^p \oplus V_l^p$, $W_k^p \oplus W_l^p$, $V_k^p \oplus W_l^p$ and $V_k^p \oplus W_k^p$ are direct sums.
\end{lem}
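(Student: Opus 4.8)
The plan is to exploit the fact that the subspaces $V_k^p$, $V_l^p$, $W_k^p$, $W_l^p$ and the $L^2$-scalar product on $S^n$ are all independent of the parameters $\alpha,\beta$, whereas the associated eigenvalues $\lambda_{\beta,p}^k$ and $\mu_{\alpha,p}^l$ are not. Hence it is enough to prove the orthogonality statement for a single convenient pair $(\alpha,\beta)$; it then automatically holds for all $\alpha,\beta>0$.

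Concretely, I would fix $\alpha,\beta>0$ with $\frac{\alpha}{\beta}\in\mathbb{R}\setminus\mathbb{Q}$. By the computations above, $V_k^p\subseteq\mathrm{Eig}(F_{\alpha\beta,p}^{S^n},\lambda_{\beta,p}^k)$ for $k\in\mathbb{N}$ and $W_l^p\subseteq\mathrm{Eig}(F_{\alpha\beta,p}^{S^n},\mu_{\alpha,p}^l)$ for $l\in\mathbb{N}_0$, while $V_0^p=\{0\}$ by Remark~\ref{h0}, so any orthogonality relation involving index $0$ on a $V$-space is trivial. For $k\neq l$, parts i) and ii) of Lemma~\ref{ewte} give $\lambda_{\beta,p}^k\neq\lambda_{\beta,p}^l$ and $\mu_{\alpha,p}^k\neq\mu_{\alpha,p}^l$, and part iv) of Lemma~\ref{ewte} gives $\lambda_{\beta,p}^k\neq\mu_{\alpha,p}^l$ for \emph{all} $k,l\in\mathbb{N}_0$, in particular also for $k=l$. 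Thus each of the pairs $V_k^p,V_l^p$, then $W_k^p,W_l^p$, then $V_k^p,W_l^p$, and finally $V_k^p,W_k^p$ lies in two eigenspaces of $F_{\alpha\beta,p}^{S^n}$ attached to distinct eigenvalues, and part iii) of Remark~\ref{syor} shows that such pairs are $L^2$-orthogonal. Since neither the four spaces nor the scalar product depends on $\alpha$ or $\beta$, the perpendicularity holds for every choice $\alpha,\beta>0$. For the ``in particular'' it then suffices to recall that two $L^2$-orthogonal subspaces $U,U'$ satisfy $U\cap U'=\{0\}$, since $w\in U\cap U'$ forces $(w,w)=0$; hence $V_k^p\oplus V_l^p$, $W_k^p\oplus W_l^p$, $V_k^p\oplus W_l^p$ and $V_k^p\oplus W_k^p$ are indeed direct sums.

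The only slightly delicate point is the reduction to an irrational ratio $\alpha/\beta$: one has to observe that the objects whose orthogonality is claimed genuinely do not involve the parameters, so that it is legitimate to prove the claim for one convenient pair and then discard that restriction. Everything else is a mechanical case-check against Lemma~\ref{ewte} together with the elementary fact that orthogonal subspaces intersect only in $0$, so I do not expect a real obstacle here.
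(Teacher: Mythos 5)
Your proof is correct and follows essentially the same route as the paper: one uses that $V_k^p$ and $W_l^p$ sit inside eigenspaces of $F_{\alpha\beta,p}^{S^n}$ for \emph{every} $\alpha,\beta>0$, picks $\frac{\alpha}{\beta}$ irrational so that Lemma \ref{ewte} guarantees all relevant eigenvalues are distinct, and invokes the $L^2$-orthogonality of eigenforms to distinct eigenvalues; since the spaces and the scalar product do not depend on $\alpha,\beta$, the conclusion is parameter-free. Your explicit handling of $V_0^p=\{0\}$ and of the ``direct sum'' consequence only makes more precise what the paper leaves implicit.
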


\begin{proof}
Let $k,l \in \mathbb{N}$ with $k \neq l$. Then $V_k^p$ and $V_l^p$ are, due to Remark \ref{vw} and Lemma \ref{ewte}, subspaces of eigenspaces of $F_{\alpha \beta, p}^{S^n}$ for the distinct eigenvalues $\lambda_{\beta,p}^k$ and $\lambda_{\beta,p}^l$ and therefore perpendicular with respect to the $L^2$-scalar product, since $F_{\alpha \beta}$ is self-adjoint. In the same manner, $W_k^p$ and $W_l^p$ are subspaces of eigenspaces for the distinct eigenvalues $\mu_{\alpha, p}^k$ and $\mu_{\alpha,p}^l$ and hence perpendicular. \\
Now let $\alpha, \beta > 0$ such that that $\frac{\alpha}{\beta} \in \mathbb{R} \setminus \mathbb{Q}$. Then, due to Lemma \ref{ewte}, $\lambda_{\beta,p}^k \neq \mu_{\alpha,p}^l$ and $\lambda_{\beta,p}^k \neq \mu_{\alpha,p}^k$, so the statement follows for $V_k^p$ and $W_l^p$, respectively $V_k^p$ and $W_k^p$.
\end{proof}

Now we can show that the eigenvalues of the operator $F_{\alpha \beta,p}^{S^n}$ we derived in the previous both sections already form its whole spectrum.

\begin{thm} \label{thms}
Let $1 \leq p \leq n$ and $\alpha, \beta >0$. The spectrum of the operator $F_{\alpha \beta,p}^{S^n}$ is given by
\begin{align*}
\mathrm{Spec}(F_{\alpha \beta,p}^{S^n}) &= \left( \mathop{\Cup}_{k \in \mathbb{N}} \{ \lambda_{\beta,p}^k \}_{\mathrm{dim}(V_k^p)} \right) \Cup \left( \mathop{\Cup}_{k \in \mathbb{N}_0} \{ \mu_{\alpha,p}^k \}_{\mathrm{dim}(W_k^p)} \right) 
\end{align*}
and the corresponding eigenspaces are, for $k \in \mathbb{N}$,
\begin{align*}
\mathrm{Eig}\big(F_{\alpha \beta,p}^{S^n}, \lambda_{\beta,p}^k \big) = 
\begin{cases}
V_k^p, &\text{if} \ \lambda_{\beta,p}^k \neq \mu_{\alpha,p}^l \ \text{for all} \ l \in \mathbb{N}_0 \\
V_k^p \oplus W_l^p, &\text{if} \ \lambda_{\beta,p}^k = \mu_{\alpha,p}^l \ \text{for one} \ l \in \mathbb{N}_0
\end{cases}
\end{align*}
and, for $k \in \mathbb{N}_0$,
\begin{align*}
\mathrm{Eig}\big(F_{\alpha \beta,p}^{S^n}, \mu_{\alpha,p}^k \big) &= 
\begin{cases}
W_k^p, &\text{if} \ \mu_{\alpha,p}^k \neq \lambda_{\beta,p}^l \ \text{for all} \ l \in \mathbb{N} \\
W_k^p \oplus V_l^p, &\text{if} \ \mu_{\alpha,p}^k = \lambda_{\beta,p}^l \ \text{for one} \ l \in \mathbb{N}
\end{cases}.
\end{align*}
\end{thm}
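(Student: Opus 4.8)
The plan is to imitate the proof of Theorem~\ref{thmspt}. By Remark~\ref{vw} every element of $V_k^p$ (for $k\in\mathbb{N}$) is an eigenform of $F_{\alpha\beta,p}^{S^n}$ for the eigenvalue $\lambda_{\beta,p}^k$ and every element of $W_k^p$ (for $k\in\mathbb{N}_0$) is an eigenform for $\mu_{\alpha,p}^k$, so the whole content of the theorem is that these exhaust the spectrum and the eigenspaces. The one nontrivial ingredient needed is the completeness
\[
\overline{\ \bigoplus_{k\in\mathbb{N}}V_k^p\ \oplus\ \bigoplus_{k\in\mathbb{N}_0}W_k^p\ }^{\,L^2}=\Omega^p_{L^2}(S^n,\mathbb{C}).
\]
Granting this, the argument is short. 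Since $F_{\alpha\beta,p}^{S^n}$ is elliptic and self-adjoint on the compact manifold $S^n$ (Remark~\ref{syor}), the space $\Omega^p_{L^2}(S^n,\mathbb{C})$ is the Hilbert sum of its eigenspaces and eigenforms for distinct eigenvalues are orthogonal. If $\lambda$ were an eigenvalue distinct from all $\lambda_{\beta,p}^k$ ($k\in\mathbb{N}$) and all $\mu_{\alpha,p}^k$ ($k\in\mathbb{N}_0$), then $\mathrm{Eig}(F_{\alpha\beta,p}^{S^n},\lambda)$ would be orthogonal to every $V_k^p$ and $W_k^p$, hence to a dense subspace, hence $\{0\}$; so the spectrum is contained in $\{\lambda_{\beta,p}^k:k\in\mathbb{N}\}\cup\{\mu_{\alpha,p}^k:k\in\mathbb{N}_0\}$.

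To determine the eigenspaces — which also yields the multiplicities and hence the stated spectrum formula — fix a value $\nu$ in the spectrum and put
\[
S_\nu:=\Big(\bigoplus_{k:\ \lambda_{\beta,p}^k=\nu}V_k^p\Big)\ \oplus\ \Big(\bigoplus_{k:\ \mu_{\alpha,p}^k=\nu}W_k^p\Big).
\]
By Lemma~\ref{ewte}(i),(ii) at most one $V_k^p$ occurs here and, by Remark~\ref{he}, at most one $W_k^p$; the sum of the two is direct since an exact co-closed form on a compact manifold vanishes (elements of $V_k^p$ are co-closed on $S^n$ by Lemma~\ref{id}, those of $W_k^p$ are exact by naturality of $d$). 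Thus $S_\nu\subseteq\mathrm{Eig}(F_{\alpha\beta,p}^{S^n},\nu)$, the subspaces $S_\nu$ for distinct $\nu$ are mutually orthogonal (Remark~\ref{syor}), and $\overline{\bigoplus_\nu S_\nu}^{\,L^2}=\Omega^p_{L^2}(S^n,\mathbb{C})$ by completeness. Since $\Omega^p_{L^2}(S^n,\mathbb{C})=\bigoplus_\nu\mathrm{Eig}(F_{\alpha\beta,p}^{S^n},\nu)$ is likewise an orthogonal decomposition over the same index set with $S_\nu\subseteq\mathrm{Eig}(F_{\alpha\beta,p}^{S^n},\nu)$, comparing the two forces $S_\nu=\mathrm{Eig}(F_{\alpha\beta,p}^{S^n},\nu)$ for every $\nu$. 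Reading off $S_{\lambda_{\beta,p}^k}$ and $S_{\mu_{\alpha,p}^k}$ with Lemma~\ref{ewte}(iii),(iv) and Remark~\ref{he} gives exactly the two case distinctions in the statement.

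The real work is the completeness. Since the spaces $V_k^p,W_k^p$ do not involve $\alpha$ or $\beta$, it suffices to prove it for $\alpha=\beta=1$, i.e.\ for the Hodge--Laplacian $\Delta_p^{S^n}=F_{11,p}^{S^n}$, where it amounts to the classical decomposition of $\Omega^p(S^n)$ into Laplace eigenspaces (equivalently, into $\mathrm{SO}(n+1)$-irreducible summands). I would obtain it by combining: density of the restrictions of polynomial $p$-forms on $\mathbb{R}^{n+1}$ in $\Omega^p_{L^2}(S^n,\mathbb{C})$ (Weierstrass approximation of the coefficient functions); the fact that $\iota^*$ only detects the part of a form annihilated by $\nu\lrcorner(\cdot)$, since $\iota^*\nu^\flat=0$; and a Fischer (harmonic) decomposition of a homogeneous polynomial $p$-form with vanishing normal part — peeling off $\|x\|^2$-multiples and, via the exterior derivative and the radial vector field $r\nu$, exact pieces of lower degree — which exhibits it as a finite sum of elements of $\{\omega\in H_j^p\mid\nu\lrcorner\omega=0\}$ and $d(H_j^{p-1})$, whose restrictions are precisely the $V_j^p$ and $W_{j-1}^p$; Hodge theory on $S^n$ (with $*$-duality for the co-exact contribution) then closes the bookkeeping. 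Alternatively one may simply cite the known description of Laplace eigenforms on round spheres. Making this Euclidean decomposition match the two families $V_k^p$ and $W_k^p$ exactly is the main obstacle; everything after the completeness statement is formal.
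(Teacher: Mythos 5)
Your proposal follows the paper's proof in all essentials: the paper likewise reduces everything to the completeness of $\bigoplus_{k}V_k^p\oplus\bigoplus_{k}W_k^p$ in $\Omega^p_{L^2}(S^n,\mathbb{C})$ --- which it obtains by citing from the literature the decomposition $H_k^p=\{\omega\in H_k^p\mid \nu\lrcorner\omega=0\}\oplus d(H_{k+1}^{p-1})$ and the density of $\iota^*\big(\bigoplus_k H_k^p\big)$ in $\Omega^p(S^n)$, i.e.\ exactly the ``cite the known description'' option you offer in place of your sketched Fischer-decomposition argument --- and then identifies the eigenspaces by expanding an eigenform in this decomposition and comparing coefficients, just as you do via the comparison of the two orthogonal decompositions. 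The only cosmetic difference is that you get the directness of $V_k^p\oplus W_l^p$ from ``exact and co-closed implies zero,'' while the paper's Lemma \ref{og} gets orthogonality by observing that for $\frac{\alpha}{\beta}$ irrational the corresponding eigenvalues are distinct; both are valid.
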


\begin{proof} 
By \cite[Lemma 3.1]{bc} we have $H_k^p = \{ \omega \in H_k^p \mid \nu \lrcorner \omega = 0\} \oplus d(H^{p-1}_{k+1})$ for all $k \in \mathbb{N}_0$. In \cite[Korollar 6.6]{it} it is shown that $\iota^* (\bigoplus_{k \in \mathbb{N}_0} H_k^p )$ is dense in $\Omega^p(S^n)$ and, therefore, dense in $\Omega^p_{L^2}(S^n)$. \\
Let $\lambda \in \mathrm{Spec}(F_{\alpha \beta,p}^{S^n})$ and $\omega \in \mathrm{Eig}(F_{\alpha \beta,p}^{S^n}, \lambda)$ with $\omega \neq 0$. Due to the density and Lemma \ref{og}, we have
\begin{align*}
\mathrm{Eig}(F_{\alpha \beta,p}^{S^n}, \lambda) \subseteq \Omega^p_{L^2}(S^n) = \overline{\iota^*\Big(\bigoplus_{k \in \mathbb{N}_0}H_k^p \Big)}^{L^2} = \overline{\bigoplus_{l \in \mathbb{N}} V_l^p \oplus \bigoplus_{l \in \mathbb{N}_0} W_l^p}^{L^2},
\end{align*}
 i.e.\ $\omega = \sum_{l \in \mathbb{N}}v_l + \sum_{l \in \mathbb{N}_0}w_l$ for $v_l \in V_l^p$ and $w_l \in W_l^p$. Therefore, with Remark \ref{vw} it follows that
\begin{align*}
\lambda \left( \sum_{l \in \mathbb{N}}v_l + \sum_{l \in \mathbb{N}_0}w_l \right) = \lambda \omega = F_{\alpha \beta}^{S^n} \omega = \sum_{l \in \mathbb{N}} \lambda_{\beta,p}^l v_l + \sum_{l \in \mathbb{N}_0} \mu_{\alpha,p}^l w_l .
\end{align*}
As a result, $\lambda_{\beta,p}^l = \lambda$ for all $l \in \mathbb{N}$ with $v_l \neq 0$ and $\mu_{\alpha,p}^l = \lambda$ for all $l \in \mathbb{N}_0$ with $w_l \neq 0$ and, therefore,
\begin{align*}
\mathrm{Eig}(F_{\alpha \beta,p}^{S^n}, \lambda) \subseteq \overline{\bigoplus_{\substack{l \in \mathbb{N}: \\ \lambda_{\beta,p}^l = \lambda}} V_l^p \oplus \bigoplus_{\substack{l \in \mathbb{N}_0: \\ \mu_{\alpha,p}^l = \lambda}} W_l^p}^{L^2} = \bigoplus_{\substack{l \in \mathbb{N}: \\ \lambda_{\beta,p}^l = \lambda}} V_l^p \oplus \bigoplus_{\substack{l \in \mathbb{N}_0: \\ \mu_{\alpha,p}^l = \lambda}} W_l^p .
\end{align*}
Since $\omega \neq 0$ we have $\lambda = \lambda_{\beta,p}^l$, for some $l \in \mathbb{N}$, or $\lambda = \mu_{\alpha,p}^l$, for some $l \in \mathbb{N}_0$, i.e.\ $\lambda$ is one of the already known eigenvalues. Hence, with $\lambda_{\beta,p}^l$ for $l \in \mathbb{N}$ and $\mu_{\alpha,p}^l$ for $l \in \mathbb{N}_0$, we have already found the entire spectrum of $F_{\alpha \beta,p}^{S^n}$. 

\medskip
In order to determine the associated eigenspaces, we choose $k \in \mathbb{N}$ fixed and consider $\lambda := \lambda_{\beta,p}^k$. Lemma \ref{ewte} then tells us that $\lambda_{\beta,p}^l = \lambda_{\beta,p}^k$ for an $l \in \mathbb{N}$ if and only if $l=k$. Furthermore, due to Remark \ref{he}, $\mu_{\alpha,p}^l = \lambda_{\beta,p}^k$ can be true for at most one $l \in \mathbb{N}_0$.  \\
We have shown that
\begin{align*}
\mathrm{Eig}(F_{\alpha \beta,p}^{S^n}, \lambda_{\beta,p}^k) \subseteq 
\begin{cases}
V_k^p, &\text{if} \ \lambda_{\beta,p}^k \neq \mu_{\alpha,p}^l \ \text{for all} \ l \in \mathbb{N}_0 \\
V_k^p \oplus W_l^p, &\text{if} \ \lambda_{\beta,p}^k = \mu_{\alpha,p}^l \ \text{for one} \ l \in \mathbb{N}_0
\end{cases} .
\end{align*}
We have $V_k^p \subseteq \mathrm{Eig}\big(F_{\alpha \beta,p}^{S^n}, \lambda_{\beta,p}^k \big)$ and $W_l^p \subseteq \mathrm{Eig}\big(F_{\alpha \beta,p}^{S^n}, \mu_{\alpha,p}^l \big) = \mathrm{Eig}\big(F_{\alpha \beta,p}^{S^n}, \lambda_{\beta,p}^k\big)$, if $\lambda_{\beta,p}^k = \mu_{\alpha,p}^l$ for some $l \in \mathbb{N}_0$. Hence, in this case $V_k^p \oplus W_l^p \subseteq \mathrm{Eig}\big(F_{\alpha \beta,p}^{S^n}, \lambda_{\beta,p}^k \big)$. This yields
\begin{align*}
\mathrm{Eig}(F_{\alpha \beta,p}^{S^n}, \lambda_{\beta,p}^k) = 
\begin{cases}
V_k^p, &\text{if} \ \lambda_{\beta,p}^k \neq \mu_{\alpha,p}^l \ \text{for all} \ l \in \mathbb{N}_0 \\
V_k^p \oplus W_l^p, &\text{if} \ \lambda_{\beta,p}^k = \mu_{\alpha,p}^l \ \text{for one} \ l \in \mathbb{N}_0
\end{cases} .
\end{align*}
Analogously one shows that
\begin{align*}
\mathrm{Eig}(F_{\alpha \beta,p}^{S^n}, \mu_{\alpha,p}^k) &= 
\begin{cases}
W_k^p, &\text{if} \ \mu_{\alpha,p}^k \neq \lambda_{\beta,p}^l \ \text{for all} \ l \in \mathbb{N} \\
W_k^p \oplus V_l^p, &\text{if} \ \mu_{\alpha,p}^k = \lambda_{\beta,p}^l \ \text{for one} \ l \in \mathbb{N}
\end{cases} . \qedhere
\end{align*}
\end{proof}

\begin{bem} \label{speig} $ $
\begin{enumerate}
\item For $\alpha= \beta$ and $n=2p$ we have seen in Lemma \ref{ewte} that $\lambda_{\beta}^{k+1} = \mu_{\alpha}^k$ for all $k \in \mathbb{N}_0$. Thus, in this case we have that, for all $k \in \mathbb{N}_0$,
\begin{align*}
\mathrm{Eig}\big(F_{\alpha \alpha,p}^{S^n}, \alpha (k+p)(k+p+1)\big) = V_{k+1}^p \oplus W_k^p 
\end{align*}
and
\begin{align*}
\mathrm{Spec}(F_{\alpha \alpha,p}^{S^n}) = \mathop{\Cup}_{l \in \mathbb{N}_0} \{\alpha (l+p)(l+p+1)\}_{\mathrm{dim}(V_{l+1}^p \oplus W_l^p)} .
\end{align*}
\item If $\alpha = \beta$ and $n \neq 2p$ or if $\frac{\alpha}{\beta} \in \mathbb{R}\setminus \mathbb{Q}$, we have, again by Lemma \ref{ewte}, that for all $k \in \mathbb{N}$ and $l \in \mathbb{N}_0$
\begin{align*}
\mathrm{Eig}(F_{\alpha \beta,p}^{S^n}, \lambda_{\beta,p}^k) = V_k^p \ \ \ \ \ \text{and} \ \ \ \ \
\mathrm{Eig}(F_{\alpha \beta,p}^{S^n}, \mu_{\alpha,p}^l) = W_l^p .
\end{align*}
\item The eigenvalues of $F_{\alpha \beta,p}^{S^n}$ are all positive since $n-p \geq 0$.
\end{enumerate}

\end{bem}

\subsection{Multiplicities}

With the following proposition we can directly read off the geometric multiplicities of the eigenvalues.

\begin{satz} \label{mults}
For all $1 \leq p \leq n$ and $k \in \mathbb{N}$, we have
\begin{align*}
\mathrm{dim}(V_k^p) = \frac{(n+k-1)!(n+2k-1)}{p!(k-1)!(n-p-1)!(n+k-p-1)(k+p)}
\end{align*}
and, for all $k \in \mathbb{N}_0$, 
\begin{align*}
\mathrm{dim}(W_k^p) = \frac{(n+k)!(n+2k+1)}{(p-1)!k!(n-p)!(n+k-p+1)(k+p)} .
\end{align*}
\end{satz}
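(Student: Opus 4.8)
The plan is to compute these dimensions by transporting them to the model spaces $H_m^q$ on $\mathbb{R}^{n+1}$ and then reducing everything to a closed formula for $\dim H_m^q$.

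First I would show that for every $k\in\mathbb{N}_0$ the pullback $\iota^*\colon H_k^p\to\Omega^p(S^n)$ is injective. If $\omega\in H_k^p$ and $\iota^*\omega=0$, then $\delta^{S^n}\iota^*\omega=0$, and since $\delta^{\mathbb{R}^{n+1}}\omega=0$ and $\nabla_{\nu}\omega=\tfrac{k}{r}\omega$ by \eqref{nno}, Lemma \ref{id} collapses to $(n-p+k+1)\,\iota^*(\nu\lrcorner\omega)=0$, hence $\iota^*(\nu\lrcorner\omega)=0$ because $n-p+k+1>0$. Together with $\nu\lrcorner(\nu\lrcorner\omega)=0$ this forces $\nu\lrcorner\omega=0$ on $S^n$, so $\omega$ vanishes on $S^n$ and, being a homogeneous polynomial form, vanishes identically. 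Consequently $\iota^*$ restricts to isomorphisms $\{\omega\in H_k^p\mid\nu\lrcorner\omega=0\}\cong V_k^p$ and $d(H_{k+1}^{p-1})\cong W_k^p$ (the latter because $d(H_{k+1}^{p-1})\subseteq H_k^p$), so $\dim V_k^p=\dim\{\omega\in H_k^p\mid\nu\lrcorner\omega=0\}$ and $\dim W_k^p=\dim d(H_{k+1}^{p-1})$.

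Next I would reduce both quantities to $a_m^q:=\dim H_m^q$. The splitting $H_m^q=\{\omega\in H_m^q\mid\nu\lrcorner\omega=0\}\oplus d(H_{m+1}^{q-1})$ from \cite[Lemma 3.1]{bc} gives $\dim\{\omega\in H_m^q\mid\nu\lrcorner\omega=0\}=a_m^q-\dim d(H_{m+1}^{q-1})$. For the exact summand the crucial identity is $\ker\big(d|_{H_m^q}\big)=d(H_{m+1}^{q-1})$: if $\eta\in H_m^q$ is closed, Cartan's homotopy formula for the Euler field $E=\sum_i x^i\partial_i$ yields $\eta=\tfrac{1}{m+q}\,d(E\lrcorner\eta)$, and a short flat-space computation ($\Delta(E\lrcorner\eta)=-2\,\delta\eta=0$ and $\delta(E\lrcorner\eta)=-E\lrcorner\delta\eta=0$) shows $E\lrcorner\eta\in H_{m+1}^{q-1}$. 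Hence $\dim d(H_m^q)=a_m^q-\dim d(H_{m+1}^{q-1})$, and iterating — the chain ends because $d$ is injective on $H_m^0$ for $m\geq1$ (a homogeneous harmonic polynomial of positive degree is non-constant) — gives $\dim W_k^p=\dim d(H_{k+1}^{p-1})=\sum_{j=0}^{p-1}(-1)^j a_{k+1+j}^{p-1-j}$ and $\dim V_k^p=a_k^p-\dim W_k^p$. It then remains to substitute the classical closed formula for $a_m^q=\dim H_m^q$, which can either be quoted from \cite{it} or derived from $\dim H_m^0=\binom{n+m}{m}-\binom{n+m-2}{m-2}=\tfrac{(2m+n-1)(m+n-2)!}{m!\,(n-1)!}$ together with the exact sequences relating $H_m^q$, the space $\mathcal{H}_m^q$ of all degree-$m$ harmonic $q$-forms (of dimension $\binom{n+1}{q}\dim H_m^0$) and $\delta(\mathcal{H}_{m+1}^{q+1})$; inserting it, the alternating sum for $\dim W_k^p$ becomes a telescoping sum of factorial expressions that collapses to the single product in the statement, and $\dim V_k^p$ follows by subtraction.

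The step I expect to be the main obstacle is this last one: checking that the alternating sums genuinely collapse to the displayed closed forms, and keeping track of the degenerate parameter values — the cases $p=1$, $p=n$ and $k=0$ versus $k\geq1$ — which is exactly where various binomial coefficients vanish and where the admissible ranges ($k\in\mathbb{N}$ for $V_k^p$, $k\in\mathbb{N}_0$ for $W_k^p$) differ in the two formulas. A subsidiary point is to confirm that all the direct-sum decompositions on $\mathbb{R}^{n+1}$ used above remain valid over the whole range of $(m,q)$ occurring in the telescoping.
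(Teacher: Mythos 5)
Your strategy is sound, but it is a genuinely different route from the paper's. The paper disposes of the statement in three lines: by Remark \ref{speig}, for $\alpha=\beta=1$ and $n\neq 2p$ the spaces $V_k^p$ and $W_k^p$ are precisely the eigenspaces of the Hodge--Laplacian $\Delta_p^{S^n}=F_{11,p}^{S^n}$ for the eigenvalues $\lambda_{1,p}^k$ and $\mu_{1,p}^k$, so both dimensions are simply read off from the known table of multiplicities in \cite[table I]{bc}. You instead recompute those multiplicities from scratch on $\mathbb{R}^{n+1}$: the injectivity of $\iota^*$ on $H_k^p$ (your argument via Lemma \ref{id} and $\nabla_\nu\omega=\tfrac{k}{r}\omega$ is correct, as is the conclusion that a form with homogeneous polynomial coefficients vanishing on $S^n$ vanishes identically), the identification $\ker\big(d|_{H_m^q}\big)=d(H_{m+1}^{q-1})$ via Cartan's formula for the Euler field (this checks out: $L_E\eta=(m+q)\eta$, and $E\lrcorner\eta$ is again harmonic and co-closed), and the resulting alternating sum $\dim W_k^p=\sum_{j=0}^{p-1}(-1)^j\dim H_{k+1+j}^{p-1-j}$ together with $\dim V_k^p=\dim H_k^p-\dim W_k^p$, using the splitting from \cite[Lemma 3.1]{bc} that the paper also invokes. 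What your approach buys is self-containedness (modulo a closed formula for $\dim H_m^q$, which you would still quote from \cite{it} or derive) and a uniform treatment of $n=2p$, a case the paper's written proof does not explicitly address; what it costs is the final substitution-and-telescoping computation, which you explicitly leave undone and which is exactly the content of the cited table. As it stands, then, your argument correctly reduces the Proposition to a known dimension formula plus a routine but unverified algebraic identity, rather than completing the proof outright.
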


\begin{proof}
For $n \neq 2p$ and $1 \leq p \leq n$, we have by Remark \ref{speig}  that
\begin{align*}
\mathrm{dim}(V_k^p) = \mathrm{dim}\big(\mathrm{Eig}(F_{11,p}^{S^n}, \lambda_{1,p}^k)\big)
\end{align*}
for all $k \in \mathbb{N}$, and, for all $k \in \mathbb{N}_0$, that
\begin{align*}
\mathrm{dim}(W_k^p) = \mathrm{dim}\big(\mathrm{Eig}(F_{11,p}^{S^n}, \mu_{1,p}^k)\big) .
\end{align*}
In \cite[table I]{bc} the multiplicities of the eigenvalues of $\Delta^{S^n}_p = F_{11,p}^{S^n}$ are listed.
\end{proof}


\subsection{Isospectrality}

\subsubsection{$F_{\alpha \beta}$ on spheres of different radii}

We consider $n$-dimensional spheres $S^n_r$ of radius $r > 0$ embedded in $\mathbb{R}^{n+1}$ via the canonical inclusions $\iota_r: S^n_r \rightarrow \mathbb{R}^{n+1}$. First of all, the following proposition says that the spectrum of $F_{\alpha \beta,p}^{S^n_r}$ emanates from the one of $F_{\alpha \beta,p}^{S^n}$ by multiplication with the factor $\frac{1}{r^2}$. 

\begin{satz} \label{sskmr}
Let $\alpha, \beta, r > 0$ and $1 \leq p \leq n$. Then
\begin{align*}
\mathrm{Spec}(F_{\alpha \beta, p}^{S^n_r}) = \frac{1}{r^2} \cdot \mathrm{Spec}(F_{\alpha \beta, p}^{S^n}) =   \mathrm{Spec} \! \left(F_{\frac{\alpha}{r^2} \frac{\beta}{r^2}, p}^{S^n} \right).
\end{align*}
\end{satz}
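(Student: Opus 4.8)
The plan is to reduce the statement to a constant rescaling of the metric. The round sphere $S^n_r$ with its induced metric is isometric to $S^n$ equipped with the metric $\tilde g := r^2 g$, where $g = \iota^* g_{\mathrm{std}}$ is the unit round metric: the dilation $\phi\colon S^n \to S^n_r$, $x \mapsto rx$, satisfies $\phi^*(\iota_r^* g_{\mathrm{std}}) = (r\iota)^*g_{\mathrm{std}} = r^2 g$. Since an isometry induces a unitary isomorphism of the $L^2$-spaces of $p$-forms intertwining the operators $F_{\alpha\beta,p}$ (as already noted in Subsection 3.3.1), it gives $\mathrm{Spec}(F_{\alpha\beta,p}^{S^n_r}) = \mathrm{Spec}\big(F_{\alpha\beta,p}^{(S^n,\tilde g)}\big)$, and it remains to compare $F_{\alpha\beta,p}$ for the metrics $\tilde g$ and $g$ on the \emph{fixed} manifold $S^n$.

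Next I record how $d$ and $\delta$ behave under $g \mapsto \tilde g = r^2 g$. The exterior derivative is metric-independent, so $d$ is unchanged. For the codifferential I claim that $\delta^{\tilde g} = r^{-2}\delta^{g}$ on $p$-forms for every $p$. This follows either from the identity $\delta = (-1)^{p+1} *_{p+1} d\, *_p^{-1}$ (used in the proof of Proposition \ref{symn=2}) together with the Hodge-star scaling law $*^{\tilde g}_q = r^{\,n-2q} *^{g}_q$ on $q$-forms, or directly from the defining adjoint property: the $L^2$-pairing of $q$-forms scales as $(\cdot,\cdot)_{\tilde g} = r^{\,n-2q}(\cdot,\cdot)_{g}$, so for $\omega \in \Omega^{p-1}_c(S^n)$ and $\eta \in \Omega^{p}_c(S^n)$ one has $r^{\,n-2p}(\omega,\delta^{g}\eta)_{g} = (d\omega,\eta)_{\tilde g} = (\omega,\delta^{\tilde g}\eta)_{\tilde g} = r^{\,n-2p+2}(\omega,\delta^{\tilde g}\eta)_{g}$, whence $\delta^{\tilde g}\eta = r^{-2}\delta^{g}\eta$. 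Consequently, on $p$-forms $d\delta^{\tilde g} = r^{-2}d\delta^{g}$ and $\delta^{\tilde g}d = r^{-2}\delta^{g}d$ (here $d$ lands in $(p+1)$-forms, where again $\delta^{\tilde g} = r^{-2}\delta^{g}$), so
\[
F_{\alpha\beta,p}^{(S^n,\tilde g)} = \alpha\,d\delta^{\tilde g} + \beta\,\delta^{\tilde g}d = r^{-2}\big(\alpha\,d\delta^{g} + \beta\,\delta^{g}d\big) = \tfrac{1}{r^2}\,F_{\alpha\beta,p}^{S^n} = F_{\frac{\alpha}{r^2}\frac{\beta}{r^2},p}^{S^n}.
\]
Since multiplying an operator by the positive constant $\tfrac{1}{r^2}$ multiplies every eigenvalue by $\tfrac{1}{r^2}$ and leaves the eigenspaces, hence the multiplicities, unchanged, combining this with the previous paragraph yields $\mathrm{Spec}(F_{\alpha\beta,p}^{S^n_r}) = \tfrac{1}{r^2}\cdot\mathrm{Spec}(F_{\alpha\beta,p}^{S^n}) = \mathrm{Spec}\big(F_{\frac{\alpha}{r^2}\frac{\beta}{r^2},p}^{S^n}\big)$, which is the assertion.

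There is no genuinely hard step here; the only point deserving care is the scaling identity $\delta^{\tilde g} = r^{-2}\delta^{g}$ on $p$-forms, justified as above. As an alternative one could bypass Theorem \ref{thms} entirely and rerun the eigenform analysis with $\iota_r$ in place of $\iota$: the shape-operator formula \eqref{nxn} then becomes $\nabla_X\nu = \tfrac{1}{r}(X - \langle X,\nu\rangle\nu)$, and this factor propagates through Lemma \ref{id}, Proposition \ref{clu} and the subsequent eigenform computations to produce exactly the eigenvalues $\tfrac{1}{r^2}\lambda_{\beta,p}^k$ and $\tfrac{1}{r^2}\mu_{\alpha,p}^k$ with the same (purely dimensional, $r$-, $\alpha$-, $\beta$-independent) multiplicities $\dim(V_k^p)$ and $\dim(W_k^p)$. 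The scaling argument is shorter, and it is the one I would write up.
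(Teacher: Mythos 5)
Your proof is correct and follows essentially the same route as the paper: the paper also works with the dilation $x\mapsto rx$, using the intertwining relation $\delta^{S^n}T^*=r^2T^*\delta^{S^n_r}$ to map eigenspaces bijectively, and then identifies $\tfrac{1}{r^2}F_{\alpha\beta,p}^{S^n}$ with $F_{\frac{\alpha}{r^2}\frac{\beta}{r^2},p}^{S^n}$ exactly as you do. Your derivation of $\delta^{\tilde g}=r^{-2}\delta^{g}$ from the scaling of the $L^2$-pairing is a clean justification of the step the paper dismisses with ``one can easily show''.
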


We omit the proof as it is an easy calculation.

\begin{deff}
For $\alpha, \beta, r > 0$, $1 \leq p \leq n$ and $k \in \mathbb{N}_0$ we set
\begin{align*}
\lambda_{\beta,p, r}^k &:= \frac{\beta}{r^2}(k+p)(k+n-p-1) \ \ \  \ \text{and} \\
\mu_{\alpha,p, r}^k &:= \frac{\alpha}{r^2}(k+p)(k+n-p+1) .
\end{align*}
\end{deff}

Below we show that the operators $F_{\alpha \beta}$ on spheres of different radii can never be isospectral.

\begin{satz} \label{abgleich}
Let $\alpha, \beta, r, r' > 0$ and $1 \leq p \leq n$.
Then $F_{\alpha \beta,p}^{S^n_r}$ and $F_{\alpha \beta,p}^{S^n_{r'}}$ are isospectral if and only if $r = r'$.
\end{satz}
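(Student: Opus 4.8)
The \emph{plan} is to reduce the statement to the scaling formula of Proposition~\ref{sskmr} and then compare a single scaling-sensitive invariant of the two spectra, for which the smallest eigenvalue is the most convenient choice.

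First, the implication "$r=r' \Rightarrow$ isospectral" is immediate, since then $S^n_r$ and $S^n_{r'}$ are isometric (indeed the two operators coincide). For the converse I would start from Proposition~\ref{sskmr}, which yields
\[
\mathrm{Spec}(F_{\alpha \beta, p}^{S^n_r}) = \frac{1}{r^2}\cdot \mathrm{Spec}(F_{\alpha \beta, p}^{S^n}),
\qquad
\mathrm{Spec}(F_{\alpha \beta, p}^{S^n_{r'}}) = \frac{1}{r'^2}\cdot \mathrm{Spec}(F_{\alpha \beta, p}^{S^n}).
\]
Thus the two operators are isospectral if and only if the weighted set $\mathrm{Spec}(F_{\alpha \beta, p}^{S^n})$ is invariant under scaling by the factor $c := r'^2/r^2 > 0$, and it remains to show that this forces $c=1$, i.e.\ $r=r'$.

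Next I would pin down the minimum of the spectrum of the unit sphere. By Theorem~\ref{thms} the support of $\mathrm{Spec}(F_{\alpha \beta, p}^{S^n})$ consists of the numbers $\lambda_{\beta,p}^k$ ($k\in\mathbb{N}$, occurring iff $\dim V_k^p\neq 0$) and $\mu_{\alpha,p}^k$ ($k\in\mathbb{N}_0$, occurring iff $\dim W_k^p\neq 0$); by \eqref{lkbp} and \eqref{mkap} these are all strictly positive (Remark~\ref{speig}(iii)) and tend to $+\infty$, and the support is non-empty, since for instance $\mu_{\alpha,p}^0=\alpha p(n-p+1)>0$ occurs with multiplicity $\dim W_0^p=\binom{n+1}{p}>0$ by Proposition~\ref{mults}. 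Hence $E:=\min\!\big(\mathrm{Spec}(F_{\alpha \beta, p}^{S^n})\big)$ is a well-defined positive real number.

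Finally, applying $\min(\cdot)$ to the two scaling formulas gives $\min\!\big(\mathrm{Spec}(F_{\alpha \beta, p}^{S^n_r})\big)=E/r^2$ and $\min\!\big(\mathrm{Spec}(F_{\alpha \beta, p}^{S^n_{r'}})\big)=E/r'^2$; if the operators are isospectral these two numbers agree, so $E/r^2=E/r'^2$, and since $E>0$ and $r,r'>0$ we conclude $r=r'$. There is no genuine obstacle here: the only point needing a little care is that the spectrum is a discrete weighted set with support in $(0,\infty)$, so that its minimum exists and is positive — which is exactly what Theorem~\ref{thms} together with Remark~\ref{speig}(iii) provide. (Any other scaling-sensitive invariant, such as a higher eigenvalue or the Weyl leading term $\mathrm{vol}(S^n_r)=r^n\,\mathrm{vol}(S^n)$, would serve just as well, but the minimum is the most economical.)
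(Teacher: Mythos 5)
Your proof is correct and follows essentially the same route as the paper: both arguments reduce the claim to the fact that the smallest eigenvalue is positive and scales as $1/r^2$, so isospectrality forces $r=r'$. The only difference is presentational --- the paper identifies the minimum explicitly as $\lambda_{\beta,p,r}^1$ or $\mu_{\alpha,p,r}^0$ via a case distinction on $\alpha/\beta$, whereas you route through Proposition~\ref{sskmr} and only need that the minimum $E$ of $\mathrm{Spec}(F_{\alpha\beta,p}^{S^n})$ exists and is positive, which spares you that case analysis.
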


\begin{proof}
``$\Rightarrow$'': Since $F_{\alpha \beta,p}^{S^n_r}$ and $F_{\alpha \beta,p}^{S^n_{r'}}$ have the same spectrum, in particular their smallest eigenvalues coincide. 
In the case that $\frac{\alpha}{\beta} \geq \frac{(p+1)(n-p)}{p(n-p+1)}$, we have that $\lambda_{\beta,p, r}^1 \leq \mu_{\alpha,p, r}^0$ and $\lambda_{\beta,p, r'}^1 \leq \mu_{\alpha,p, r'}^0$. It follows that $\min(\mathrm{Spec}(F_{\alpha \beta,p}^{S^n_r})) = \min(\mathrm{Spec}(F_{\alpha \beta,p}^{S^n_{r'}}))$ if and only if $\frac{\beta(p+1)(n-p)}{r^2} = \lambda_{\beta,p, r}^1 = \lambda_{\beta,p, r'}^1 = \frac{\beta(p+1)(n-p)}{r'^2}$, i.e.\ if and only if $r=r'$.
If $\frac{\alpha}{\beta} < \frac{(p+1)(n-p)}{p(n-p+1)}$, we have $\lambda_{\beta,p, r}^1 > \mu_{\alpha,p, r}^0$ and $\lambda_{\beta,p, r'}^1 > \mu_{\alpha,p, r'}^0$. Hence $\min(\mathrm{Spec}(F_{\alpha \beta,p}^{S^n_r})) = \min(\mathrm{Spec}(F_{\alpha \beta,p}^{S^n_{r'}}))$ if and only if $\frac{\alpha p (n-p+1)}{r^2} = \mu_{\alpha,p,r}^0 = \mu_{\alpha,p, r'}^0 = \frac{\alpha p (n-p+1)}{r'^2}$, i.e.\ if and only if $r=r'$. \\
``$\Leftarrow$'': This direction is trivial.
\end{proof}

\subsubsection{Variation of parameters}

We now discuss the question how the spectra of two operators $F_{\alpha \beta,p}^{S^n_r}$ and $F_{\alpha' \beta',p'}^{S^n_{r'}}$ are related for different parameters $\alpha, \alpha', \beta, \beta', r, r' > 0$ and $1 \leq p, p' \leq n$. As a first step, we will fix the radii of the spheres.

\begin{bem}
Proposition \ref{symn=2} implies that for all $\alpha, \beta, r > 0$ and $n=2p$
\begin{align*}
\mathrm{Spec}(F_{\alpha \beta,p}^{S^n_r}) = \mathrm{Spec}(F_{\beta \alpha, p}^{S^n_r}) .
\end{align*}
\end{bem}

\begin{thm} \label{varpar1}
Let $\alpha, \alpha', \beta, \beta', r > 0$ and $1 \leq p \leq n$ with $n \neq 2p$. Then $F_{\alpha \beta,p}^{S^n_r}$ and $F_{\alpha' \beta',p}^{S^n_r}$ are isospectral if and only if $(\alpha, \beta) = (\alpha', \beta')$. \\
For $n=2p$ the statement holds with $\{\alpha, \beta\} = \{\alpha', \beta'\}$ instead of $(\alpha, \beta) = (\alpha', \beta')$.
\end{thm}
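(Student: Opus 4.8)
The ``$\Leftarrow$'' direction is trivial when $n\ne 2p$ (the operators then coincide), and for $n=2p$ it is Proposition \ref{symn=2} applied to $M=S^n_r$, which gives $\mathrm{Spec}(F_{\alpha\beta,p}^{S^n_r})=\mathrm{Spec}(F_{\beta\alpha,p}^{S^n_r})$. So the content is the ``$\Rightarrow$'' direction, and the plan is to reconstruct $\alpha$ and $\beta$ (respectively the unordered pair $\{\alpha,\beta\}$) from the spectrum, in the spirit of the reconstruction arguments in the proofs of Lemma \ref{1} and Theorem \ref{4}.

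First I would use Proposition \ref{sskmr} to reduce to radius $1$: replacing $(\alpha,\beta,\alpha',\beta')$ by $(\alpha/r^2,\beta/r^2,\alpha'/r^2,\beta'/r^2)$ affects neither the hypothesis nor the conclusion, so I may assume $r=1$. By Theorem \ref{thms} together with the explicit multiplicities of Proposition \ref{mults}, for all $\gamma,\delta>0$
\[
\mathrm{Spec}(F_{\gamma\delta,p}^{S^n})\;=\;\Bigl(\mathop{\Cup}_{k\ge 1}\{\delta c_k\}_{\dim V_k^p}\Bigr)\;\Cup\;\Bigl(\mathop{\Cup}_{k\ge 0}\{\gamma d_k\}_{\dim W_k^p}\Bigr),
\]
where $c_k:=(k+p)(k+n-p-1)$ and $d_k:=(k+p)(k+n-p+1)$ are strictly increasing sequences of positive reals and the weights $\dim V_k^p,\dim W_k^p$ are fixed positive integers independent of $\gamma,\delta$. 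Writing $B:=\mathop{\Cup}_{k\ge 0}\{d_k\}_{\dim W_k^p}$ and $C:=\mathop{\Cup}_{k\ge 1}\{c_k\}_{\dim V_k^p}$, both in $\mathcal{M}$ since $S^n$ is compact, this reads $\mathrm{Spec}(F_{\gamma\delta,p}^{S^n})=\gamma B\Cup\delta C$; thus it suffices to show that $(\gamma,\delta)\mapsto\gamma B\Cup\delta C$ is injective for $n\ne 2p$ and injective modulo transposition of $\gamma,\delta$ for $n=2p$. (One should also assume $1\le p\le n-1$ here: for $p=n$, which always meets $n\ne 2p$, the family $C$ is empty and $F_{\gamma\delta,n}=\gamma\,d\delta$ depends on $\gamma$ alone.)

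For $n\ne 2p$ the crucial arithmetic input, extracted from Proposition \ref{mults}, is that $\dim W_0^p=\binom{n+1}{p}$ and $\dim V_1^p=\binom{n+1}{p+1}$, and $\binom{n+1}{p}\ne\binom{n+1}{p+1}$ \emph{precisely} when $n\ne 2p$. Given $M=\gamma B\Cup\delta C$, set $s:=\min(\mathrm{supp}(M))=\min(\gamma d_0,\delta c_1)$. Since the $d_k$ and the $c_k$ are strictly increasing, the multiplicity $M(s)$ equals $\binom{n+1}{p}$ if $\gamma d_0<\delta c_1$, equals $\binom{n+1}{p+1}$ if $\gamma d_0>\delta c_1$, and equals $\binom{n+1}{p}+\binom{n+1}{p+1}$ if $\gamma d_0=\delta c_1$; these three numbers are pairwise distinct because $n\ne 2p$, so $M(s)$ tells us which case occurs and hence yields $\gamma=s/d_0$ and/or $\delta=s/c_1$. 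In the first two cases one then peels off the identified family as a weighted-set difference — $M\setminus\gamma B=\delta C$, which is legitimate since $M=\gamma B\Cup\delta C$ is a weighted union — and recovers the remaining parameter from $\min(\delta C)/c_1$, resp.\ $\min(\gamma B)/d_0$. This determines $(\gamma,\delta)$ uniquely, so $\mathrm{Spec}(F_{\alpha\beta,p}^{S^n})=\mathrm{Spec}(F_{\alpha'\beta',p}^{S^n})$ forces $(\alpha,\beta)=(\alpha',\beta')$.

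For $n=2p$ one checks from \eqref{lkbp} and \eqref{mkap} that $c_{k+1}=d_k$, and from Proposition \ref{mults} that $\dim V_{k+1}^p=\dim W_k^p$; hence $B=C=:A\in\mathcal{M}$ with $\min A=p(p+1)>0$ and $A(\min A)=\dim W_0^p=\binom{2p+1}{p}>0$, and $\mathrm{Spec}(F_{\gamma\delta,p}^{S^{2p}})=\gamma A\Cup\delta A$. Now the $n=2p$ case of the argument in Theorem \ref{4} applies verbatim (with the multiplicity $\binom{2p-1}{p}$ replaced by $1$ and $\mathrm{Spec}(\Delta_0^{T^n})$ replaced by $A$, which here contains no zero): the multiplicity of $\min M$ is $A(\min A)$ if $\gamma\ne\delta$ and $2A(\min A)$ if $\gamma=\delta$, which recovers $\min(\gamma,\delta)$ — or both, if they coincide —, after which removing $\min(\gamma,\delta)\cdot A$ leaves $\max(\gamma,\delta)\cdot A$ and so the second value. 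Thus $\{\gamma,\delta\}$ is recovered, which by Proposition \ref{symn=2} is sharp. I expect the main obstacle in both regimes to be the possible overlap of the two families when $\gamma/\delta$ is rational — the ``mixed'' eigenvalues of Remark \ref{speig} — which is exactly what forces the reconstruction to proceed through the multiplicity of the smallest eigenvalue; the argument hinges on $\binom{n+1}{p}=\binom{n+1}{p+1}\iff n=2p$, so that for $n\ne 2p$ the bottom of the spectrum already ``knows'' which of the two families it comes from.
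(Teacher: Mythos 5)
Your argument is essentially the paper's own proof: both reconstruct $(\gamma,\delta)$ (resp.\ $\{\gamma,\delta\}$) from the multiplicity of the smallest eigenvalue, using that $\dim W_0^p=\binom{n+1}{p}$ and $\dim V_1^p=\binom{n+1}{p+1}$ differ exactly when $n\neq 2p$, and then peel off the identified family to recover the second parameter; your preliminary rescaling to $r=1$ via Proposition \ref{sskmr} is only a cosmetic difference, since the paper works with $\lambda^k_{\beta,p,r}$ and $\mu^k_{\alpha,p,r}$ directly. Your parenthetical caveat about $p=n$ is well taken and points at a boundary case the paper's statement does not exclude: there $\delta d$ vanishes on $\Omega^n(S^n)$, so the spectrum is independent of $\beta$ (and moreover $V_1^n$ contains the volume form with eigenvalue $\lambda^1_{\beta,n}=0$, so the step $\min(M)>0$ also fails).
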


\begin{proof}
``$\Rightarrow$'': Let at first $n \neq 2p$. We show that the map
\begin{align*}
f: (0,\infty) \times (0,\infty) \rightarrow \mathcal{M} : (\gamma, \delta) \mapsto \mathrm{Spec}(F_{\gamma \delta,p}^{S^n_r})
\end{align*}
is injective. To this end, let $M$ be in the image of $f$. Due to Remark \ref{speig}, we have $m := \min(M) > 0$. Furthermore, due to Theorem \ref{thms} and Proposition \ref{sskmr}, the multiplicity $\mathrm{mult}(m)$ of $m$ satisfies
\begin{align*}
\mathrm{mult}(m) \in \{ \mathrm{dim}(V_1^p), \mathrm{dim}(W_0^p), \mathrm{dim}(V_1^p) + \mathrm{dim}(W_0^p)\} \stackrel{\ref{mults}}{=} \left\{ \binom{n+1}{p+1}, \binom{n+1}{p}, \binom{n+2}{p+1} \right\}.
\end{align*}
Since $1 \leq p \leq n$ and $n \neq 2p$, the set in fact consists of three different elements.
\begin{enumerate}
\item[$\bullet$] In the case that $\mathrm{mult}(m) = \mathrm{dim}(V_1^p)$, we set $\delta := \frac{r^2m}{(p+1)(n-p)}$. Let $M' := M \setminus \Cup_{k \in \mathbb{N}} \{\lambda_{\delta,p,r}^k\}_{\mathrm{dim}(V_k^p)}$ and $m' := \min(M')$. Then we define $\gamma := \frac{r^2m'}{p(n-p+1)}$. 
\item[$\bullet$] If $\mathrm{mult}(m) = \mathrm{dim}(W_0^p)$, we put $\gamma := \frac{r^2m}{p(n-p+1)}$ and $\delta := \frac{r^2m'}{(p+1)(n-p)}$. Here $M' := M \setminus \Cup_{k \in \mathbb{N}_0} \{\mu_{\gamma,p,r}^k\}_{\mathrm{dim}(W_k^p)}$ and $m' := \min(M')$. 
\item[$\bullet$] In the last case that $\mathrm{mult}(m) = \mathrm{dim}(V_1^p) + \mathrm{dim}(W_0^p)$ let $\gamma := \frac{r^2m}{p(n-p+1)}$ and $\delta := \frac{r^2m}{(p+1)(n-p)}$. 
\end{enumerate}
In each case, $M = \mathrm{Spec}(F_{\gamma \delta, p}^{S^n_r})$. Here $\gamma$ and $\delta$ are unique by construction. Thus $f$ is injective. Since by assumption $f(\alpha, \beta) = f(\alpha', \beta')$, it follows that $(\alpha, \beta) = (\alpha', \beta')$. 

\medskip
Now let $n=2p$. We consider the map
\begin{align*}
\widetilde{f}: \big\{C \subset (0,\infty) \mid \# C \in \{1,2\} \big\} \rightarrow \mathcal{M} : \{\gamma, \delta\} \mapsto \mathrm{Spec}(F_{\gamma \delta,p}^{S^{2p}_r}) .
\end{align*}
Let $M$ be in its image. Then $m:= \min(M) > 0$. We set $\gamma := \frac{r^2m}{p(p+1)}$. Let $M' := M \setminus \Cup_{k \in \mathbb{N}_0} \{\mu_{\gamma,p,r}^k\}_{\mathrm{dim}(W_k^p)}$ and $m' := \min{M'}$. We define $\delta := \frac{r^2m'}{p(p+1)}$. Then $M = \mathrm{Spec}(F_{\gamma \delta,p}^{S^2_r})$, whereat $\{\gamma, \delta\}$ is unique by construction. Consequently, $\widetilde{f}$ is injective. Hence, $\widetilde{f}(\{\alpha, \beta\}) = \widetilde{f}(\{\alpha', \beta'\})$ implies that $\{\alpha, \beta\} = \{\alpha', \beta'\}$.

``$\Leftarrow$'': The opposite directions are trivial.
\end{proof}

For different radii, the spectra are related as follows:

\begin{kor}
Let $\alpha, \alpha', \beta, \beta', r, r' > 0$ and $1 \leq p \leq n$ such that $F_{\alpha \beta,p}^{S^n_r}$ and $F_{\alpha' \beta',p}^{S^n_{r'}}$ are isospectral and $n \neq 2p$. Then $r=r'$ if and only if $(\alpha, \beta) = (\alpha', \beta')$. \\
For $n=2p$ the statement holds with $\{\alpha, \beta\} = \{\alpha', \beta'\}$ instead of $(\alpha, \beta) = (\alpha', \beta')$. 
\end{kor}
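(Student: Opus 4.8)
The plan is to reduce the statement to the unit sphere by means of Proposition~\ref{sskmr} and then to quote Theorem~\ref{varpar1}. First I would rewrite the hypothesis: by Proposition~\ref{sskmr} one has $\mathrm{Spec}(F_{\alpha \beta, p}^{S^n_r}) = \mathrm{Spec}(F_{\frac{\alpha}{r^2} \frac{\beta}{r^2}, p}^{S^n})$ and likewise $\mathrm{Spec}(F_{\alpha' \beta', p}^{S^n_{r'}}) = \mathrm{Spec}(F_{\frac{\alpha'}{r'^2} \frac{\beta'}{r'^2}, p}^{S^n})$, so the assumed isospectrality of $F_{\alpha \beta, p}^{S^n_r}$ and $F_{\alpha' \beta', p}^{S^n_{r'}}$ is equivalent to the isospectrality of the two operators $F_{\frac{\alpha}{r^2} \frac{\beta}{r^2}, p}^{S^n}$ and $F_{\frac{\alpha'}{r'^2} \frac{\beta'}{r'^2}, p}^{S^n}$ on the unit sphere.

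Next, in the case $n \neq 2p$, I would apply Theorem~\ref{varpar1} to these two operators on $S^n$: it gives that they are isospectral if and only if $\left(\frac{\alpha}{r^2}, \frac{\beta}{r^2}\right) = \left(\frac{\alpha'}{r'^2}, \frac{\beta'}{r'^2}\right)$. From this equality both directions of the corollary follow at once. If $r = r'$, multiplying the equality through by the common positive factor $r^2 = r'^2$ yields $(\alpha, \beta) = (\alpha', \beta')$. Conversely, if $(\alpha, \beta) = (\alpha', \beta')$, then comparing first coordinates gives $\frac{\alpha}{r^2} = \frac{\alpha}{r'^2}$, and since $\alpha > 0$ this forces $r^2 = r'^2$, hence $r = r'$ by positivity of $r, r'$.

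For $n = 2p$ the reduction is identical, but Theorem~\ref{varpar1} now yields the unordered-pair equality $\left\{\frac{\alpha}{r^2}, \frac{\beta}{r^2}\right\} = \left\{\frac{\alpha'}{r'^2}, \frac{\beta'}{r'^2}\right\}$. The implication $r = r' \Rightarrow \{\alpha, \beta\} = \{\alpha', \beta'\}$ is again obtained by scaling by the common factor. For the converse, $\{\alpha, \beta\} = \{\alpha', \beta'\}$ gives in particular $\alpha + \beta = \alpha' + \beta'$; adding the two elements on each side of the displayed pair-equality produces $\frac{\alpha + \beta}{r^2} = \frac{\alpha' + \beta'}{r'^2} = \frac{\alpha + \beta}{r'^2}$, and since $\alpha + \beta > 0$ we get $r^2 = r'^2$, so $r = r'$.

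I do not expect a genuine obstacle: the corollary is essentially a bookkeeping consequence of Proposition~\ref{sskmr} and Theorem~\ref{varpar1}. The only place calling for a little care is the $n = 2p$ converse, where the scalar $r$ must be recovered from an equality of unordered pairs; comparing the sums of the pairs, which agree because $\{\alpha, \beta\} = \{\alpha', \beta'\}$, settles this cleanly.
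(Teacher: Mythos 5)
Your proposal is correct, but it takes a slightly different route from the paper. The paper's proof is a two-line citation: for $r=r'$ the two operators live on the same sphere, so Theorem~\ref{varpar1} gives $(\alpha,\beta)=(\alpha',\beta')$; conversely, for $(\alpha,\beta)=(\alpha',\beta')$ the operators have identical coefficients on spheres of radii $r$ and $r'$, and Proposition~\ref{abgleich} (whose proof compares the minima of the two spectra) forces $r=r'$. You instead normalize both operators to the unit sphere via Proposition~\ref{sskmr} and extract from Theorem~\ref{varpar1} the single equality $\bigl(\tfrac{\alpha}{r^2},\tfrac{\beta}{r^2}\bigr)=\bigl(\tfrac{\alpha'}{r'^2},\tfrac{\beta'}{r'^2}\bigr)$ (respectively the unordered-pair version for $n=2p$), from which both implications drop out by elementary algebra; Proposition~\ref{abgleich} is never needed. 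Your route is more uniform and mirrors the reduction the paper itself uses in the subsequent proposition about radii related by a factor $c$; the paper's version is shorter on the page but leans on one more prior result. Your handling of the $n=2p$ converse by summing the two entries of the unordered pairs is a clean way to recover the scalar $r$ and is exactly the kind of care that case requires.
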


\begin{proof}
The first direction is just Theorem \ref{varpar1} and the opposite direction Proposition \ref{abgleich}.
\end{proof}

\begin{satz}
Let $\alpha, \alpha', \beta, \beta', r, c > 0$, $1 \leq p \leq n$ and $n \neq 2p$. Then $F_{\alpha \beta,p}^{S^n_r}$ and $F_{\alpha' \beta',p}^{S^n_{cr}}$ are isospectral if and only if $(\alpha', \beta') = (c^2\alpha, c^2\beta)$. \\
For $n=2p$ the statement holds with $\{\alpha', \beta'\} = \{c^2 \alpha, c^2 \beta\}$ instead of $(\alpha', \beta') = (c^2 \alpha, c^2 \beta)$.
\end{satz}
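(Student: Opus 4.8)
The plan is to reduce the comparison of operators on spheres of radii $r$ and $cr$ to a comparison of operators on a single sphere, for which Theorem \ref{varpar1} is already available; this parallels the proof of the corresponding corollary for flat tori.

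First I would apply Proposition \ref{sskmr} twice to rescale both operators to the unit sphere:
\begin{align*}
\mathrm{Spec}(F_{\alpha \beta,p}^{S^n_r}) = \mathrm{Spec}\!\left(F_{\frac{\alpha}{r^2}\,\frac{\beta}{r^2},p}^{S^n}\right), \qquad
\mathrm{Spec}(F_{\alpha'\beta',p}^{S^n_{cr}}) = \mathrm{Spec}\!\left(F_{\frac{\alpha'}{c^2 r^2}\,\frac{\beta'}{c^2 r^2},p}^{S^n}\right).
\end{align*}
Consequently $F_{\alpha \beta,p}^{S^n_r}$ and $F_{\alpha'\beta',p}^{S^n_{cr}}$ are isospectral if and only if $F_{\frac{\alpha}{r^2}\frac{\beta}{r^2},p}^{S^n}$ and $F_{\frac{\alpha'}{c^2 r^2}\frac{\beta'}{c^2 r^2},p}^{S^n}$ are.

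Then I would invoke Theorem \ref{varpar1} with the fixed radius $1$. For $n \neq 2p$ it yields that the latter two operators are isospectral precisely when $\big(\tfrac{\alpha}{r^2},\tfrac{\beta}{r^2}\big) = \big(\tfrac{\alpha'}{c^2 r^2},\tfrac{\beta'}{c^2 r^2}\big)$, which after clearing the common denominator is exactly $(\alpha',\beta') = (c^2\alpha, c^2\beta)$. For $n = 2p$ the same theorem gives the analogous statement with unordered pairs, i.e.\ $\{\alpha',\beta'\} = \{c^2\alpha, c^2\beta\}$. Since both implications of Theorem \ref{varpar1} are used, this settles both directions of the claim at once.

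I do not expect a genuine obstacle here: all the substantial work — completeness of the list of eigenvalues (Theorem \ref{thms}), the collision analysis of the $\lambda_{\beta,p}^k$ and $\mu_{\alpha,p}^k$ (Lemma \ref{ewte}), and the injectivity of the parameter-to-spectrum map (Theorem \ref{varpar1}) — is already in place. The only point to watch is the bookkeeping of the scaling factors $r^2$, $c^2$ and $c^2 r^2$, so that the denominators cancel correctly; a quick check of the trivial ``$\Leftarrow$'' direction (namely $(\alpha',\beta') = (c^2\alpha,c^2\beta)$ forces $\tfrac{\alpha'}{c^2r^2} = \tfrac{\alpha}{r^2}$ and $\tfrac{\beta'}{c^2r^2} = \tfrac{\beta}{r^2}$) confirms that the normalization is the right one.
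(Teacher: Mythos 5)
Your argument is correct and is essentially the paper's own proof: both reduce to the unit sphere via Proposition \ref{sskmr} and then invoke the injectivity statement of Theorem \ref{varpar1}, the only difference being that the paper clears the factor $r^2$ immediately (comparing $(\alpha,\beta)$ with $(\tfrac{\alpha'}{c^2},\tfrac{\beta'}{c^2})$) while you carry the common denominator $r^2$ through to the end. The scaling bookkeeping in your version checks out.
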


\begin{proof}
Proposition \ref{sskmr} tells us that $F_{\alpha \beta,p}^{S^n_r}$ and $F_{\alpha' \beta',p}^{S^n_{cr}}$ are isospectral if and only if
\begin{align*}
\mathrm{Spec}(F_{\alpha \beta,p}^{S^n}) &= r^2 \mathrm{Spec}(F_{\alpha \beta,p}^{S^n_r}) = r^2 \mathrm{Spec}(F_{\alpha' \beta',p}^{S^n_{cr}}) = \frac{1}{c^2} \mathrm{Spec}(F_{\alpha' \beta',p}^{S^n}) = \mathrm{Spec} \!\left( \! F_{\frac{\alpha'}{c^2} \frac{\beta'}{c^2},p}^{S^n} \! \right) .
\end{align*}
Due to  Theorem \ref{varpar1}, for $n \neq 2p$ this is equivalent to
$(\alpha, \beta) = \left(\frac{\alpha'}{c^2}, \frac{\beta'}{c^2}\right)$,
and for $n=2p$ to
$\{\alpha, \beta\} = \left\{\frac{\alpha'}{c^2}, \frac{\beta'}{c^2} \right\}$.
\end{proof}


\bibliographystyle{alpha}
\bibliography{Literature}

\end{document}